\date{}
\title{Entropic Optimal Transport between Unbalanced Gaussian Measures has a Closed Form }
\author{
Hicham Janati \\
	Inria Saclay\\
	Paris-Saclay, France\\
	\texttt{hicham.janati@inria.fr} \\
	\And
	Boris Muzellec \\
	 ENSAE,\\
	Paris-Saclay, France\\
	\texttt{boris.muzellec@ensae.fr} \\
	\And
	Gabriel Peyr\'e \\
    CNRS and ENS, PSL University \\
    Paris, France\\
\texttt{gabriel.peyre@ens.fr}
	\And
	Marco Cuturi \\
	Google Brain, ENSAE \\
    Paris Saclay, France\\
\texttt{cuturi@google.com}
}
\begin{document}
\maketitle
\begin{abstract}
Although optimal transport (OT) problems admit closed form solutions in a very few notable cases, e.g. in 1D or between Gaussians, these closed forms have proved extremely fecund for practitioners to define tools inspired from the OT geometry. On the other hand, the numerical resolution of OT problems using entropic regularization has given rise to many applications, but because there are no known closed-form solutions for entropic regularized OT problems, these approaches are mostly algorithmic, not informed by elegant closed forms. In this paper, we propose to fill the void at the intersection between these two schools of thought in OT by proving that the entropy-regularized optimal transport problem between two Gaussian measures admits a closed form. Contrary to the unregularized case, for which the explicit form is given by the Wasserstein-Bures distance, the closed form we obtain is differentiable everywhere, even for Gaussians with degenerate covariance matrices. We obtain this closed form solution by solving the fixed-point equation behind Sinkhorn's algorithm, the default method for computing entropic regularized OT. Remarkably, this approach extends to the generalized \textit{unbalanced} case --- where Gaussian measures are scaled by positive constants. This extension leads to a closed form expression for unbalanced Gaussians as well, and highlights the mass transportation / destruction trade-off seen in unbalanced optimal transport. Moreover, in both settings, we show that the optimal transportation plans are (scaled) Gaussians and provide analytical formulas of their parameters. These formulas constitute the first non-trivial closed forms for entropy-regularized optimal transport, thus providing a ground truth for the analysis of entropic OT and Sinkhorn's algorithm.
\end{abstract}

\section{Introduction}
Optimal transport (OT) theory~\cite{Villani09,figalli2017monge} has recently inspired several works in data science, where dealing with and comparing probability distributions, and more generally positive measures, is an important staple (see~\citep{Peyre2019computational} and references therein). For these applications of OT to be successful, a belief now widely shared in the community is that some form of regularization is needed for OT to be both scalable and avoid the curse of dimensionality~\cite{dereich2013constructive,fournier2015rate}. Two approaches have emerged in recent years to achieve these goals: either regularize directly the measures themselves, by looking at them through a simplified lens; or regularize the original OT problem using various modifications. The first approach exploits well-known closed-form identities for OT when comparing two univariate measures or two multivariate Gaussian measures. In this approach, one exploits those formulas and operates by summarizing complex measures as one or possibly many univariate or multivariate Gaussian measures.
The second approach builds on the fact that for arbitrary measures, regularizing the OT problem, either in its primal or dual form, can result in simpler computations and possibly improved sample complexity. The latter approach can offer additional benefits for data science: because the original marginal constraints of the OT problem can also be relaxed, regularized OT can also yield useful tools to compare measures with different total mass --- the so-called ``unbalanced'' case~\citep{benamou03}--- which provides a useful additional degree of freedom. Our work in this paper stands at the intersection of these two approaches. To our knowledge, that intersection was so far empty: no meaningful closed-form formulation was known for regularized optimal transport. We provide closed-form formulas of entropic (OT) of two Gaussian measures for balanced and unbalanced cases.
\paragraph{Summarizing measures \emph{vs.} regularizing OT.} 
Closed-form identities to compute OT distances (or more generally recover Monge maps) are known when either (1) both measures are univariate and the ground cost is submodular~\cite[\S2]{SantambrogioBook}: in that case evaluating OT only requires integrating that submodular cost w.r.t. the quantile distributions of both measures; or (2) both measures are Gaussian, in a Hilbert space, and the ground cost is the squared Euclidean metric~\cite{dowson82frechet,gelbrich1990}, in which case the OT cost is given by the Wasserstein-\citeauthor{bures1969extension} metric~\citep{bhatia2018bures,malago2018wasserstein}. These two formulas have inspired several works in which data measures are either projected onto 1D lines~\citep{rabin-ssvm-11,2013-Bonneel-barycenter}, with further developments in~\cite{paty2019subspace,kolouri2019generalized,titouan2019sliced}; or represented by Gaussians, to take advantage of the simpler computational possibilities offered by the Wasserstein-Bures metric~\cite{heusel2017gans,muzellec2018elliptical,chen2018optimal}.

Various schemes have been proposed to regularize the OT problem in the primal~\cite{cuturi2013sinkhorn,FrognerNIPS} or the dual~\citep{shirdhonkar2008approximate,arjovsky2017wasserstein,2016-Cuturi-siims}. We focus in this work on the formulation obtained by~\cite{chizat17}, which combines entropic regularization~\cite{cuturi2013sinkhorn} with a more general formulation for unbalanced transport~\cite{2017-chizat-focm,LieroMielkeSavareShort,LieroMielkeSavareLong}. The advantages of unbalanced entropic transport are numerous: it comes with favorable sample complexity regimes compared to unregularized OT~\citep{genevay19sample}, can be cast as a loss with favorable properties~\citep{genevay18a,feydy2019}, and can be evaluated using variations of the Sinkhorn algorithm~\cite{genevay2016stochastic}. 

\paragraph{On the absence of closed-form formulas for regularized OT.} 
Despite its appeal, one of the shortcomings of entropic regularized OT lies in the absence of simple test-cases that admit closed-form formulas. While it is known that regularized OT can be related, in the limit of infinite regularization, to the energy distance~\cite{ramdas2017wasserstein}, the absence of closed-form formulas for a fixed regularization strength poses an important practical problem to evaluate the performance of stochastic algorithms that try to approximate regularized OT: we do not know of any setup for which the ground truth value of entropic OT between continuous densities is known. The purpose of this paper is to fill this gap, and provide closed form expressions for balanced and unbalanced OT for Gaussian measures. We hope these formulas will prove useful in two different ways: as a solution to the problem outlined above, to facilitate the evaluation of new methodologies building on entropic OT, and more generally to propose a more robust yet well-grounded replacement to the Bures-Wasserstein metric.

\paragraph{Related work.} From an economics theory perspective, \citet{galichon} provided a closed form for an ``equilibrium 2-sided matching problem'' which is equivalent to entropy-regularized optimal transport. Second, a sequence of works in optimal control theory \cite{chen16, chen18, chen20} studied stochastic systems, of which entropy regularized optimal transport between Gaussians can be seen as a special case, and found a closed form of the optimal dual potentials. Finally, a few recent concurrent works provided a closed form of entropy regularized OT between Gaussians: first \citet{gerolin20} found a closed form in the univariate case, then \citet{mallasto20} and \citet{delbarrio20} generalized the formula for multivariate Gaussians. The closest works to this paper are certainly those of \citet{mallasto20} and \citet{delbarrio20} where the authors solved the balanced entropy regularized OT and studied the Gaussian barycenters problem. To the best of our knowledge, the closed form formula we provide for unbalanced OT is novel. Other differences between this paper and the aforementioned papers are highlighted below.

\paragraph{Contributions.} Our contributions can be summarized as follows:
\begin{itemize}
\item \Cref{thm:otclosedform} provides a closed form expression of the entropic (OT) plan $\pi$, which is shown to be a Gaussian measure itself (also shown in \citep{galichon, chen20, mallasto20, delbarrio20}). Here, we furthermore study the properties of the OT loss function: it remains well defined, convex and differentiable even for singular covariance matrices unlike the Bures metric.
\item Using the definition of debiased Sinkhorn barycenters~\citep{luise19,janati20}, \Cref{thm:barycenters} shows that the entropic barycenter of Gaussians is Gaussian and its covariance verifies a fixed point equation similar to that of \citet{agueh11}. \citet{mallasto20} and \citet{delbarrio20} provided similar fix point equations however by restricting the barycenter problem to the set of Gaussian measures whereas we consider the larger set of sub-Gaussian measures.
\item As in the balanced case,  \Cref{thm:unbalanced} provides a closed form expression of the unbalanced Gaussian transport plan. The obtained formula sheds some light on the link between mass destruction and the distance between the means of $\alpha, \beta$ in Unbalanced OT.
\end{itemize}
\paragraph{Notations.}
$\Scal^d$ denotes the set of square symmetric matrices in $\RR^{d\times d}$. $\Scal^d_{++}$ and $\Scal^d_+$ denote the cones of positive definite and positive semi-definite matrices in $\Scal^d$ respectively.
Let $\Ncal(\ba, \bA)$ denote the multivariate Gaussian distribution with mean $\ba \in \RR^d$ and variance $\bA \in \Scal^d_{++}$.
$f = \Qcal(\ba, \bA)$ denotes the quadratic form $f: x\mapsto -\frac{1}{2}(x^\top \bA x - 2\ba^\top x)$ with $\bA \in \Scal^d$. For short, we denote $\Qcal(\bA) = \Qcal(0, \bA)$. Whenever relevant, we follow the convention $0\log 0 = 0$. $\Mcal^+_p$ denotes the set of non-negative measures in $\RR^d$ with a finite p-th order moment and its subset of probablity measures $\Pcal_p$. For a non-negative measure $\alpha \in \Mcal^+_p(\RR^d)$, $\Lcal_2(\alpha)$ denotes the set of functions  $f: \RR^d\to\RR$ such that $\Esp_\alpha(|f|^2)=\int_{\RR^d} |f|^2 \dd \alpha < +\infty$. With $\bC\in\Scal^d_{++}$ and $\ba,\bb\in\RR^d$, we denote the squared Mahalanobis distance: $\|\ba-\bb\|_{\bC}^2 = (\ba-\bb)^\top\bC(\ba-\bb)$.

\section{Reminders on Optimal Transport}\label{subsec:ot_gaussians}
\paragraph{The Kantorovich problem.}
Let $\alpha, \beta \in \PP_2$ and let $\Pi(\alpha, \beta)$ denote the set of probability measures in $\PP_2$ with marginal distributions equal to $\alpha$ and $\beta$. 
%
The 2-Wasserstein distance is defined as:
\begin{align}
\label{eq:kantorovich}
W_2^2(\alpha, \beta) \defeq \min_{\pi \in \Pi(\alpha, \beta)} \int_{\RR^{d\times d}} \| x - y \|^2 \dd\pi(x, y).
\end{align}
This is known as the \emph{Kantorovich} formulation of optimal transport. When $\alpha$ is absolutely continuous with respect to the Lebesgue measure (i.e. when $\alpha$ has a density), \Cref{eq:kantorovich} can be equivalently rewritten using the \emph{Monge} formulation, where $T_\sharp\mu =  \nu$ \textit{i.f.f.} for all Borel sets $A$, $\nu(T(A)) = \mu(A)$:
\begin{align}
\label{eq:monge}
W_2^2(\alpha, \beta) = \min_{T: T_\sharp\alpha = \nu} \int_{\RR^{d}} \| x - T(x) \|^2 \dd\alpha(x).
\end{align}
The optimal map $T^*$ in \Cref{eq:monge} is called the Monge map.

\paragraph{The Wasserstein-Bures metric.}
Let $\Gauss{m}{\Sigma}$ denote the Gaussian distribution on $\RR^d$ with mean $m \in \RR^d$ and covariance matrix $\Sigma \in S^d_{++}$. A well-known fact~\citep{dowson82frechet,takatsu2011} is that \Cref{eq:kantorovich} admits a closed form for Gaussian distributions, called the Wasserstein-Bures distance (a.k.a. the \emph{Fr\'echet} distance):
\begin{align}\label{eq:wasserstein_bures}
    W_2^2(\Gauss{a}{\bA}, \Gauss{b}{\bB}) = \|a - b\|^2 + \Bures(\bA, \bB),
\end{align}
where $\mathfrak{B}$ is the \emph{Bures} distance~\citep{bhatia2018bures} between positive matrices:
\begin{align}\label{eq:bures}
    \Bures(\bA, \bB) \defeq \tr\bA + \tr\bB - 2 \tr(\bA\rt\bB\bA\rt)\rt .
\end{align}
Moreover, the Monge map between two Gaussian distributions admits a closed form: $T^\star:x\rightarrow \bT^{\bA\bB}(x - \ba) + \bb$, with
\begin{align}
\label{eq:monge_map}
    \begin{split}
    \bT^{\bA\bB} \defeq \bA\mrt(\bA\rt\bB\bA\rt)\rt\bA\mrt
                = \bB\rt(\bB\rt\bA\bB\rt)\mrt\bB\rt,
    \end{split}
\end{align}
which is related to the Bures gradient (w.r.t. the Frobenius inner product):
\begin{align}
\label{eq:bures-gradient}
    \nabla_\bA\Bures(\bA, \bB) = \Id - \bT^{\bA\bB}.
\end{align}
$\Bures(\bA, \bB)$ and its gradient can be computed efficiently on GPUs using Newton-Schulz iterations which are provided in \Cref{alg:NewtonSchulz} along with numerical experiments in the appendix.
%

\section{Entropy-Regularized Optimal Transport between Gaussians}
Solving \eqref{eq:kantorovich} can be quite challenging, even in a discrete setting~\citep{Peyre2019computational}. Adding an entropic regularization term to \eqref{eq:kantorovich} results in a problem which can be solved efficiently using Sinkhorn's algorithm~\citep{cuturi2013sinkhorn}. Let $\sigma > 0$. This corresponds to solving the following problem: 
\begin{align}\label{eq:ent_ot}
\OT_\sigma(\alpha, \beta) \defeq \min_{\pi \in \Pi(\alpha, \beta)} \int_{\RR^d\times\RR^d} \| x - y \|^2 \dd\pi(x, y) + 2\sigma^2 \KL(\pi \| \alpha\otimes\beta),
\end{align}
where $\KL(\pi \| \alpha\otimes\beta) \defeq \int_{\RR^d} \log\left(\tfrac{\dd\pi}{\dd\alpha\dd\beta}\right)\dd\pi$ is the Kullback-Leibler divergence (or relative entropy). 
As in the original case \eqref{eq:kantorovich}, $\OT_\sigma$ can be studied with centered measures (i.e zero mean) with no loss of generality:
\begin{lemma}
\label{lemma:centering}
Let $\alpha, \beta \in \Pcal$ and  $\bar{\alpha}, \bar{\beta}$ their respective centered transformations. It holds that
\begin{equation}
    \label{eq:centering}
    \OT_\sigma(\alpha, \beta) = \OT_\sigma(\bar{\alpha}, \bar{\beta}) +  \| \ba - \bb \|^2.
\end{equation}
\end{lemma}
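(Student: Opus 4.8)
The plan is to exploit the translation invariance of both terms in the objective of \eqref{eq:ent_ot}. Write $\ba = \Esp_\alpha[x]$ and $\bb = \Esp_\beta[y]$ (these exist by the moment assumption on $\alpha,\beta\in\Pcal$), let $\tau_\ba : x \mapsto x - \ba$ and $\tau_\bb : y \mapsto y - \bb$, so that $\bar\alpha = (\tau_\ba)_\sharp \alpha$ and $\bar\beta = (\tau_\bb)_\sharp \beta$, and set $S \defeq \tau_\ba \times \tau_\bb : (x,y) \mapsto (x-\ba,\, y-\bb)$ on $\RR^d\times\RR^d$. Since $S$ is a bijection with measurable inverse, $\pi \mapsto S_\sharp\pi$ is a bijection from $\Pi(\alpha,\beta)$ onto $\Pi(\bar\alpha,\bar\beta)$: the $x$-marginal of $S_\sharp\pi$ is $(\tau_\ba)_\sharp\alpha = \bar\alpha$, the $y$-marginal is $\bar\beta$, and $S^{-1}_\sharp$ maps $\Pi(\bar\alpha,\bar\beta)$ back into $\Pi(\alpha,\beta)$. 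Thus it suffices to compare the two objectives along this correspondence.

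First I would check that the quadratic cost changes by exactly $\|\ba-\bb\|^2$. For $\pi\in\Pi(\alpha,\beta)$ and $\bar\pi \defeq S_\sharp\pi$, the change of variables $x' = x-\ba$, $y' = y-\bb$ gives
\begin{align*}
\int \|x-y\|^2\dd\pi(x,y) &= \int \|x' - y' + (\ba-\bb)\|^2 \dd\bar\pi(x',y') \\
&= \int \|x'-y'\|^2\dd\bar\pi + 2\Big\langle \ba - \bb,\ \int (x'-y')\,\dd\bar\pi\Big\rangle + \|\ba-\bb\|^2.
\end{align*}
The middle term vanishes because $\int x'\,\dd\bar\pi = \Esp_{\bar\alpha}[x'] = 0$ and $\int y'\,\dd\bar\pi = \Esp_{\bar\beta}[y'] = 0$ by construction of $\bar\alpha,\bar\beta$, which is the only place the finiteness of first moments is used. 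Hence $\int\|x-y\|^2\dd\pi = \int\|x'-y'\|^2\dd\bar\pi + \|\ba-\bb\|^2$.

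Next I would argue that the entropic term is invariant. Since $S$ is a (coordinate-wise) translation, $S_\sharp(\alpha\otimes\beta) = \bar\alpha\otimes\bar\beta$, and relative entropy is invariant under pushforward by a bi-measurable bijection, so $\KL(\bar\pi\,\|\,\bar\alpha\otimes\bar\beta) = \KL(\pi\,\|\,\alpha\otimes\beta)$: concretely $\tfrac{\dd\bar\pi}{\dd(\bar\alpha\otimes\bar\beta)} = \tfrac{\dd\pi}{\dd(\alpha\otimes\beta)}\circ S^{-1}$ (the Jacobian of $S^{-1}$ being $1$), and $\pi\ll\alpha\otimes\beta$ iff $\bar\pi\ll\bar\alpha\otimes\bar\beta$. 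Combining with the cost computation, the objective of \eqref{eq:ent_ot} at $\pi$ equals its objective at $\bar\pi = S_\sharp\pi$ plus the constant $\|\ba-\bb\|^2$; taking the infimum over the bijectively related feasible sets yields \eqref{eq:centering}. The only point requiring care is this invariance of $\KL$ and the handling of the singular case via the convention $0\log 0 = 0$; everything else is a routine change of variables, so I do not anticipate a genuine obstacle.
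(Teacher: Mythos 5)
Your proof is correct and follows essentially the same approach as the paper: push each $\pi\in\Pi(\alpha,\beta)$ forward under the coordinate-wise translation, observe that $\KL$ is translation-invariant, and that the quadratic cost picks up exactly $\|\ba-\bb\|^2$; then take infima over the bijectively-related feasible sets. Your version is actually slightly more careful than the paper's: the paper's intermediate step~(iii) writes $\int\|x-y\|^2\dd\bar\pi = \|\ba-\bb\|^2 + \int\|x-y\|^2\dd\pi$, which has the wrong sign (it should read $\int\|x-y\|^2\dd\pi = \|\ba-\bb\|^2 + \int\|x-y\|^2\dd\bar\pi$, as in your expansion where the cross term vanishes because $\bar\alpha,\bar\beta$ are centered), though the lemma's conclusion is unaffected.
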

\paragraph{Dual problem and Sinkhorn's algorithm.}
Compared to \eqref{eq:kantorovich}, \eqref{eq:ent_ot} enjoys additional properties, such as the uniqueness of the solution $\pi^*$. Moreover, problem \eqref{eq:ent_ot} has the following dual formulation:
\begin{align}
\label{eq:dual-ent-ot}
\OT_{\sigma}(\alpha, \beta) = &\max_{\substack{f \in \Lcal_1(\alpha), \\ g \in \Lcal_1(\beta)}} \Esp_\alpha(f) + \Esp_\beta(g) - 2\sigma^2\left( \int_{{\RR^d} \times {\RR^d}}\hspace{-2em} e^{\tfrac{f(x) + g(y) - \|x-y\|^2}{2\sigma^2}}\dd\alpha(x)\dd\beta(y) -1\right).
\end{align}
If $\alpha$ and $\beta$ have finite second order moments, a pair of dual potentials $(f, g)$ is optimal if and only they verify the following optimality conditions $\beta$-a.s and $\alpha$-a.s respectively~\cite{mena19}:
\begin{align}
    \label{eq:optimality-potentials}
    \begin{split}
    e^{\tfrac{f(x)}{2\sigma^2}} \left(\int_{\RR^d} e^{\tfrac{-\|x-y\|^2 + g(y)}{2\sigma^2}}  \dd\beta(y)\right) = 1, \quad
    e^{\tfrac{g(x)}{2\sigma^2}} \left(\int_{\RR^d} e^{\tfrac{-\|x-y\|^2 + f(y)}{2\sigma^2}}  \dd\alpha(y)\right) = 1.
    \end{split}
\end{align}
Moreover, given a pair of optimal dual potentials $(f, g)$, the optimal transportation plan is given by
\begin{equation}
    \label{eq:transportion_plan}
    \frac{\dd \pi^\star}{\dd \alpha \dd \beta}(x, y) = e^{\frac{f(x) + g(y) - \|x-y\|^2}{2\sigma^2}}.
\end{equation}
Starting from a pair of potentials $(f_0, g_0)$, the optimality conditions \eqref{eq:optimality-potentials} lead to an alternating dual ascent algorithm, which is equivalent to Sinkhorn's algorithm in log-domain:
\begin{align}
\label{eq:alg_sinkhorn}
    \begin{split}
    g_{n+1} &= \left(y \in {\RR^d} \rightarrow -2\sigma^2 \log\int_{\RR^d} e^{\tfrac{-\|x- y\|^2 + f_n(x)}{2\sigma^2}} \dd\alpha(x)\right),\\
    f_{n+1} &= \left(x \in {\RR^d} \rightarrow -2\sigma^2 \log\int_{\RR^d} e^{\tfrac{-\|x-y\|^2 + g_{n + 1}(y)}{2\sigma^2}}  \dd\beta(y)\right).
    \end{split}
\end{align}
\citet{sejourne19} showed that when the support of the measures is compact, Sinkhorn's algorithm converges to a pair of dual potentials. Here in particular, we study Sinkhorn's algorithm when $\alpha$ and $\beta$ are Gaussian measures.
\paragraph{Closed form expression for Gaussian measures.}\label{subsec:ent_ot_gaussians}
%
%
\begin{theorem}\label{thm:otclosedform}
Let $\bA, \bB \in \PD$ and $\alpha \sim \Ncal(\ba, \bA)$ and $\beta \sim \Ncal(\bb, \bB)$. Define $\bD_\sigma = (4\bA^{\frac{1}{2}} \bB \bA^{\frac{1}{2}} +\sigma^4 \Id )^{\frac{1}{2}}$. Then,
\begin{equation}\label{eq:means_plus_bures_sk}
    \OT_{\sigma}(\alpha, \beta) = \|\ba - \bb\|^2 + \Bcal^2_{\sigma}(\bA, \bB), \text{ where } 
\end{equation}
\begin{align}
\label{eq:otclosedform}
\begin{split}
\Bcal^2_{\sigma}(\bA, \bB) &= \tr(\bA) + \tr(\bB) - \tr(\bD_\sigma) + d\sigma^2(1 - \log(2\sigma^2)) + \sigma^2\log\det\left(\bD_\sigma + \sigma^2 \Id\right).
\end{split}
\end{align}
Moreover, with $\bC_{\sigma} = \frac{1}{2}\bA\rt \bD_\sigma\bA\mrt -\frac{\sigma^2}{2} \Id$, the Sinkhorn optimal transportation plan is also a Gaussian measure over $\RR^d\times \RR^d$ given by
\begin{equation}\label{eq:sinkorn_plan}
    \pi^\star \sim 
    \Ncal\left(\left(
            \begin{smallmatrix}
            \ba \\ 
            \bb
            \end{smallmatrix}\right),
         \left(\begin{smallmatrix}
         \bA & \bC_\sigma \\ \bC_\sigma^\top & \bB
         \end{smallmatrix}\right)
        \right).
\end{equation}
\end{theorem}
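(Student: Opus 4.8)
The plan is to solve the Sinkhorn fixed-point equations \eqref{eq:optimality-potentials} directly. First, by \Cref{lemma:centering} I may assume $\ba=\bb=0$; the term $\|\ba-\bb\|^2$ in \eqref{eq:means_plus_bures_sk} is then exactly the contribution of the means, which also reappear as the mean of $\pi^\star$ at the end. For centered Gaussians it is natural to look for optimal dual potentials that are quadratic forms, $f=\Qcal(\mathbf{U})+u$ and $g=\Qcal(\mathbf{V})+v$ with $\mathbf{U},\mathbf{V}\in\Scal^d$ and $u,v\in\RR$ (no linear term, by the $x\mapsto -x$ symmetry of $\alpha$ and $\beta$). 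Since \eqref{eq:optimality-potentials} is an \emph{if and only if} characterization and the entropic OT solution is unique, it is enough to exhibit one pair $(f,g)$ of this form satisfying it: this automatically certifies both its optimality and, via \eqref{eq:transportion_plan}, that the induced plan is \emph{the} optimal plan.

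\textbf{From the fixed point to a matrix equation.} Plugging the ansatz into the first equation of \eqref{eq:optimality-potentials}, the inner integral $\int e^{(-\|x-y\|^2+\Qcal(\mathbf{V})(y)+v)/2\sigma^2}\dd\beta(y)$ is Gaussian in $y$; completing the square yields a determinant prefactor times $e^{(\Qcal(\mathbf{U}')(x)+u')/2\sigma^2}$ with $\mathbf{U}',u'$ explicit functions of $\mathbf{V},v,\bB,\sigma$. Matching the quadratic terms expresses $\mathbf{U}$ through $\mathbf{V}$ (schematically $\mathbf{U}=\tfrac{2}{\sigma^2}(\bB^{-1}+\tfrac1{\sigma^2}\Id+\tfrac1{2\sigma^2}\mathbf{V})^{-1}-2\Id$), and matching constants gives a relation $u+v=\sigma^2\log\det(\cdots)$. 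The second equation of \eqref{eq:optimality-potentials} yields the same relations with $(\bA,\mathbf{U})$ and $(\bB,\mathbf{V})$ interchanged. Eliminating one of $\mathbf{U},\mathbf{V}$ collapses the pair into a single closed (Riccati-type) matrix equation in one unknown.

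\textbf{Solving and extracting the cost and plan.} Conjugating that equation by $\bA\rt$ and completing the square in the matrix variable reduces it to $\mathbf{X}^2=\bA\rt\bB\bA\rt+\tfrac{\sigma^4}{4}\Id=\tfrac14\bD_\sigma^2$; the positive semidefinite root $\mathbf{X}=\tfrac12\bD_\sigma$ is the admissible branch --- it is precisely the choice that keeps matrices such as $\bB^{-1}+\tfrac1{\sigma^2}\Id+\tfrac1{2\sigma^2}\mathbf{V}$ positive definite, so that all the Gaussian integrals, hence the potentials, are well defined. This determines $\mathbf{U},\mathbf{V}$ and $u+v$ explicitly in terms of $\bD_\sigma$. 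For the cost, at the optimum the exponential term in \eqref{eq:dual-ent-ot} equals $1$ (integrate the first relation of \eqref{eq:optimality-potentials} against $\alpha$), so $\OT_\sigma(\alpha,\beta)=\Esp_\alpha f+\Esp_\beta g=-\tfrac12\tr(\mathbf{U}\bA)-\tfrac12\tr(\mathbf{V}\bB)+u+v$; substituting the explicit expressions and simplifying with standard trace/determinant identities for $\bD_\sigma$ produces \eqref{eq:otclosedform}, the terms $d\sigma^2(1-\log(2\sigma^2))$ and $\sigma^2\log\det(\bD_\sigma+\sigma^2\Id)$ arising from the determinant prefactors. For the plan, \eqref{eq:transportion_plan} gives $\dd\pi^\star/\dd(\alpha\otimes\beta)(x,y)=e^{(f(x)+g(y)-\|x-y\|^2)/2\sigma^2}$; multiplying by the densities of $\alpha$ and $\beta$ shows $\pi^\star$ is proportional to the exponential of a joint quadratic form in $(x,y)$, hence Gaussian. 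Its marginals are forced to be $\alpha$ and $\beta$, and inverting the joint precision matrix by a Schur-complement computation identifies the cross-covariance block as $\bC_\sigma=\tfrac12\bA\rt\bD_\sigma\bA\mrt-\tfrac{\sigma^2}{2}\Id$, which gives \eqref{eq:sinkorn_plan}.

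\textbf{Main obstacle.} The delicate step is solving the coupled matrix system: because $\bA,\bB,\mathbf{U},\mathbf{V}$ do not commute, one must pass to a convenient basis (conjugation by $\bA\rt$) merely to reduce to a single equation, and then argue that only $\mathbf{X}=\tfrac12\bD_\sigma$ yields admissible potentials and a genuine positive-definite covariance $\Gamma=\left(\begin{smallmatrix}\bA&\bC_\sigma\\\bC_\sigma^\top&\bB\end{smallmatrix}\right)$ for $\pi^\star$ --- equivalently, that the Schur complement $\bB-\bC_\sigma^\top\bA^{-1}\bC_\sigma$ is positive definite, which comes down to $\bD_\sigma\succ\sigma^2\Id$. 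The remaining work --- the Gaussian integral, the trace/determinant simplifications needed to reach \eqref{eq:otclosedform}, and reading off the covariance --- is routine but bookkeeping-heavy.
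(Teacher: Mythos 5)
Your proposal is correct and follows a genuinely different route from the paper's in two places. First, to justify that the optimal potentials are quadratic, the paper runs the full Sinkhorn iteration, proves that the matrix map $\bF\mapsto\sigma^2\bA^{-1}+(\sigma^2\bB^{-1}+\bF^{-1})^{-1}$ is a contraction in operator norm on a suitable closed set (Proposition~\ref{prop:bures_sinkhorn_convergence}), and takes the limit to obtain the fixed-point system; you instead invoke the sufficiency half of the optimality characterization \eqref{eq:optimality-potentials} together with uniqueness of $\pi^\star$ to argue that \emph{exhibiting} a quadratic pair solving the fixed-point is already enough. This is cleaner in that it sidesteps the convergence analysis (which the paper must carry out from scratch, since the cited compact-support convergence result does not apply to Gaussians), at the price of leaning on the sufficiency direction of \cite{mena19}, which the paper states but does not re-derive; you also implicitly need the quadratic candidates to live in the admissible dual function space, which your positivity condition on $\bB^{-1}+\tfrac1{\sigma^2}\Id+\tfrac1{2\sigma^2}\mathbf{V}$ does ensure. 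Second, for the value of the loss, the paper constructs the Gaussian plan $\pi^\star$ explicitly and evaluates the primal objective via the moment formulas of Lemma~\ref{lemma:optim-loss}, whereas you evaluate the dual objective: you integrate the first optimality relation against $\alpha$ to show the exponential penalty in \eqref{eq:dual-ent-ot} vanishes, so $\OT_\sigma=\Esp_\alpha f+\Esp_\beta g=-\tfrac12\tr(\mathbf{U}\bA)-\tfrac12\tr(\mathbf{V}\bB)+u+v$. Both computations necessarily produce \eqref{eq:otclosedform}. The core algebra — eliminating one unknown, completing the square to get $(\bC+\tfrac{\sigma^2}{2}\Id)^2=\bA\bB+\tfrac{\sigma^4}{4}\Id$, conjugating by $\bA^{1/2}$ to reduce to $\mathbf{X}^2=\tfrac14\bD_\sigma^2$, and selecting the PSD root — coincides with Proposition~\ref{prop:closed_form_matrix}, though you justify the root choice via admissibility of the potentials while the paper argues that $\bC=\bA\bG^{-1}$ is similar to a positive-definite matrix; either justification works, but you should spell out that the PSD root is the \emph{unique} square root of $\tfrac14\bD_\sigma^2$ satisfying the constraint, which is where the paper's similarity argument earns its keep.
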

\begin{remark}
While for our proof it is necessary to assume that $\bA$ and $\bB$ are positive definite in order for them to have a Lebesgue density, notice that the closed form formula given by \Cref{thm:otclosedform} remains well-defined for positive semi-definite matrices. Moreover, unlike the Bures-Wasserstein metric, $\OT_{\sigma}$ is differentiable even when $\bA$ or $\bB$ are singular.
\end{remark}

The proof of \ref{thm:otclosedform} is broken down into smaller results, \Cref{prop:sinkhorn-transform,prop:bures_sinkhorn_convergence,prop:closed_form_matrix,lemma:optim-loss}. Using \Cref{lemma:centering}, we can focus in the rest of this section on centered Gaussians without loss of generality.
\paragraph{Sinkhorn's algorithm and quadratic potentials.}
We obtain a closed form solution of $\OT_\sigma$ by considering quadratic solutions of \eqref{eq:optimality-potentials}. The following key proposition characterizes the obtained potential after a pair of Sinkhorn iterations with quadratic forms.
\begin{proposition}
\label{prop:sinkhorn-transform}
Let $\alpha \sim \Ncal(0, \bA)$ and $\beta \sim \Ncal(0, \bB)$ and the Sinkhorn transform $T_\alpha: \RR^{\RR^d} \to \RR^{\RR^d}$: 
\begin{equation}
    T_\alpha(h)(x) \defeq - \log\int_{\RR^d} e^{\tfrac{-\|x- y\|^2}{2\sigma^2} + h(y)} \dd\alpha(y).
\end{equation}
Let $\bX \in \Scal_d$.
If $h = m + \Qcal(\bX)$ i.e  $h(x) = m - \frac{1}{2} x^\top \bX x$ for some $m \in \RR$, then $T_\alpha(h)$ is well-defined if and only if  $\bX' \defeq \sigma^2\bX + \sigma^2\bA^{-1} + \Id \succ 0$. In that case,
\begin{enumerate}[(i)]
    \item $T_\alpha(h)= \Qcal(\bY) + m'$ where $\bY = \frac{1}{\sigma^2}(\bX'^{-1} - \Id)$ and $m' \in \RR$ is an additive constant, 
    \item $T_\beta(T_\alpha(h))$ is well-defined and is also a quadratic form up to an additive constant, since $\bY' \defeq \sigma^2\bY + \sigma^2\bB^{-1} + \Id = \bX'^{-1} + \sigma^2 \bB^{-1} \succ 0$ and (i) applies.
\end{enumerate}
\end{proposition}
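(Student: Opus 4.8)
\medskip
\noindent\textbf{Proof strategy.} The plan is a direct Gaussian integral computation. First I would write the Lebesgue density of $\alpha$, namely $\dd\alpha(y) = (2\pi)^{-d/2}(\det\bA)^{-1/2}e^{-\frac12 y^\top\bA^{-1}y}\,\dd y$, and substitute it together with $h(y)=m-\frac12 y^\top\bX y$ into $T_\alpha(h)(x)$. Expanding $\|x-y\|^2=\|x\|^2-2x^\top y+\|y\|^2$, the exponent of the integrand becomes, up to the $y$-independent term $-\|x\|^2/(2\sigma^2)+m$ and the normalizing constant of $\alpha$, a quadratic form in $y$ whose second-order matrix is $\bM\defeq \frac1{\sigma^2}\Id+\bX+\bA^{-1}=\frac1{\sigma^2}\bX'$ and whose first-order coefficient is $x/\sigma^2$.

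Next I would invoke the standard identity $\int_{\RR^d}e^{-\frac12 y^\top\bM y+b^\top y}\,\dd y=(2\pi)^{d/2}(\det\bM)^{-1/2}e^{\frac12 b^\top\bM^{-1}b}$, which is valid and finite exactly when $\bM\succ0$: if $\bM$ has a zero or negative eigenvalue, restricting the integrand to the corresponding eigendirection shows the integral diverges to $+\infty$, so $T_\alpha(h)$ fails to be real-valued. Since $\sigma^2>0$, $\bM\succ0$ is equivalent to $\bX'\succ0$. Taking $b=x/\sigma^2$ and $\bM^{-1}=\sigma^2\bX'^{-1}$, the integral equals $C\exp\!\big(\tfrac1{2\sigma^2}x^\top\bX'^{-1}x-\tfrac1{2\sigma^2}\|x\|^2\big)=C\exp\!\big(-\Qcal(\bY)(x)\big)$, where $\bY=\frac1{\sigma^2}(\bX'^{-1}-\Id)$ and $C$ gathers the $x$-independent factors $(\det\bA)^{-1/2}$, $(\det\bM)^{-1/2}$, $e^{m}$ and the powers of $2\pi$. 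Applying $-\log$ then gives $T_\alpha(h)(x)=\Qcal(\bY)(x)+m'$ with $m'=-\log C=\tfrac12\log\det\bA+\tfrac12\log\det\bM-m$, which is exactly (i).

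For (ii) I would simply re-apply (i) with $(\beta,\bY)$ in place of $(\alpha,\bX)$: the well-definedness condition becomes $\bY'\defeq\sigma^2\bY+\sigma^2\bB^{-1}+\Id\succ0$, and substituting $\sigma^2\bY=\bX'^{-1}-\Id$ yields $\bY'=\bX'^{-1}+\sigma^2\bB^{-1}$, which is positive definite since $\bX'\succ0$ forces $\bX'^{-1}\succ0$ and $\bB\in\Scal^d_{++}$ forces $\sigma^2\bB^{-1}\succ0$. Hence (i) applies once more and $T_\beta(T_\alpha(h))$ is again a quadratic form up to an additive constant.

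I do not expect a genuine obstacle here. The only points requiring care are the ``if and only if'' for well-definedness --- one must confirm that a non-positive eigenvalue of $\bX'$ actually forces the integral to diverge, not merely that $\bX'\succ0$ is sufficient --- and bookkeeping of the additive constant $m'$, which is irrelevant to the downstream fixed-point analysis but must be tracked to state (i) precisely.
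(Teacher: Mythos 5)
Your proof is correct and follows essentially the same route as the paper's: expand $\|x-y\|^2$, absorb $h$ and the density of $\alpha$ into a single quadratic form in $y$ with second-order matrix $\tfrac{1}{\sigma^2}\bX'$, integrate the Gaussian (which is finite iff $\bX'\succ 0$), and read off $\bY=\tfrac{1}{\sigma^2}(\bX'^{-1}-\Id)$, then re-apply (i) with $(\beta,\bY)$ for part (ii). The only organizational difference is that the paper factors the Gaussian integral step through a separate convolution lemma (\Cref{lem:convolution}) whereas you carry it out inline; the computation and conclusions are identical.
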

Consider the null inialization $f_0 = 0 = \Qcal(0)$.
Since $\sigma^2 \bA^{-1} + \Id \succ0$, \Cref{prop:sinkhorn-transform} applies with $\bX = 0$ and a simple induction shows that $(f_n, g_n)$ remain quadratic forms for all $n$. Sinkhorn's algorithm can thus be written as an algorithm on positive definite matrices.
\begin{proposition}
\label{prop:bures_sinkhorn_convergence}
Starting with null potentials, Sinkhorn's algorithm is equivalent to the iterations:
\begin{align}
    \label{eq:sinkhorn-fg}
    \bF_{n+1} = \sigma^2 \bA^{-1} + \bG^{-1}_n, \hspace{3em}
    \bG_{n+1} =\sigma^2 \bB^{-1} + \bF^{-1}_{n+1},
\end{align}
 with $\bF_0 = \sigma^2 \bA^{-1} + \Id$ and $\bG_0 = \sigma^2 \bB^{-1} + \Id$.
\end{proposition}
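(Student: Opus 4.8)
The plan is to prove \Cref{prop:bures_sinkhorn_convergence} by induction on $n$, with \Cref{prop:sinkhorn-transform} supplying every step. First I would put the log-domain updates \eqref{eq:alg_sinkhorn} into the normalization used by the Sinkhorn transform of \Cref{prop:sinkhorn-transform}, i.e. absorb the factor $2\sigma^2$ so that one half-step of Sinkhorn is exactly one application of $T_\alpha$ or $T_\beta$. Starting from the null potential $\bX = 0$, the well-definedness hypothesis $\sigma^2\bA^{-1} + \Id \succ 0$ of \Cref{prop:sinkhorn-transform} holds, so the proposition applies; repeatedly invoking parts (i) and (ii) then shows that every iterate $f_n, g_n$ is a quadratic form up to an additive constant, and those constants play no role in the parameters of the plan and are discarded. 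Hence Sinkhorn's algorithm reduces to an iteration on the symmetric matrices of these quadratic potentials.

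Next I would track, at each half-step, the matrix $\bX'$ that appears in \Cref{prop:sinkhorn-transform}: when the step integrates against $\alpha$ it is $\sigma^2\bX + \sigma^2\bA^{-1} + \Id$, and against $\beta$ it is $\sigma^2\bX + \sigma^2\bB^{-1} + \Id$; I define $\bF_n$ and $\bG_n$ to be these matrices. By part (i) the potential produced by that half-step has matrix $\tfrac{1}{\sigma^2}(\bF_n^{-1} - \Id)$ (resp.\ $\tfrac{1}{\sigma^2}(\bG_n^{-1} - \Id)$); substituting this into the definition of the next such matrix, the two $\Id$ terms cancel and one obtains $\bF_{n+1} = \sigma^2\bA^{-1} + \bG_n^{-1}$ and $\bG_{n+1} = \sigma^2\bB^{-1} + \bF_{n+1}^{-1}$, i.e. exactly \eqref{eq:sinkhorn-fg}. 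The base case $\bF_0 = \sigma^2\bA^{-1} + \Id$, $\bG_0 = \sigma^2\bB^{-1} + \Id$ is just the instance $\bX = 0$ of these definitions.

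To make the induction legal I must check $\bF_n, \bG_n \succ 0$ at every step, since this is precisely the hypothesis under which each application of \Cref{prop:sinkhorn-transform} is valid and keeps the iterate quadratic. This is immediate from the recursion itself: since $\bA, \bB \in \PD$, if $\bG_n \succ 0$ then $\bF_{n+1} = \sigma^2\bA^{-1} + \bG_n^{-1} \succ 0$, and likewise $\bG_{n+1} \succ 0$; the base matrices are positive definite as noted. (Equivalently, this is what part (ii) of \Cref{prop:sinkhorn-transform} already asserts over a full round.)

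The only point needing genuine care, rather than mechanical substitution, is fixing the dictionary between \eqref{eq:alg_sinkhorn} and \Cref{prop:sinkhorn-transform}: the $2\sigma^2$ scaling of the potentials, the order in which the $f$- and $g$-updates are applied, and the indexing of $\bF_n, \bG_n$ so that the sequences start from the stated initialization (a different but equally valid choice would merely relabel the two sequences). Once that bookkeeping is settled, each inductive step is a one-line computation, so I do not anticipate any substantive obstacle.
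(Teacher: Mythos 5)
Your proof is correct and takes essentially the same route as the paper's: apply \Cref{prop:sinkhorn-transform} inductively from the null initialization $\bX = 0$, identify $\bF_n, \bG_n$ with the matrices $\bX'$ appearing at each half-step (which is exactly the paper's change of variable $\bF_n = \sigma^2\bU_n + \sigma^2\bA^{-1} + \Id$, $\bG_n = \sigma^2\bV_n + \sigma^2\bB^{-1} + \Id$), and cancel the $\Id$ terms to obtain \eqref{eq:sinkhorn-fg}, with positive definiteness of the iterates established by the same straightforward induction. The indexing/ordering bookkeeping you flag at the end is indeed present, and the paper's own proof contains the same mild mismatch between \eqref{eq:alg_sinkhorn} (which updates $g$ first) and the stated recursion (which updates $\bF$ first), but it is immaterial since only the common fixed point $(\bF,\bG)$ is used downstream.
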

Moreover, the sequence $(\bF_n, \bG_n)$ is contractive (in the matrix operator norm) and converges towards a pair of positive definite matrices $(\bF, \bG)$.
At optimality, the dual potentials are determined up to additive constants $f_0$ and $g_0$: $\frac{f}{2\sigma^2} = \Qcal(\bU) + f_0$ and $\frac{g}{2\sigma^2} = \Qcal(\bV) + g_0$ where $\bU$ and $\bV$ are given by
%
%
\begin{align}
\label{eq:changeofvariable}
    \bF = \sigma^2 \bU + \sigma^2 \bA^{-1} + \Id, \hspace{4em}
    \bG = \sigma^2 \bV + \sigma^2 \bB^{-1} + \Id.
\end{align}
\paragraph{Closed form solution.}
Taking the limit of Sinkhorn's equations \eqref{eq:sinkhorn-fg} along with the change of variable \eqref{eq:changeofvariable},
there exists a pair of optimal potentials determined up to an additive constant:
%
%
\begin{align}
\label{eq:optim-dual}
    \frac{f}{2\sigma^2} = \Qcal(\bU) = \Qcal\left(\frac{1}{\sigma^2}(\bG^{-1} - \Id)\right), \hspace{2em}
    \frac{g}{2\sigma^2} = \Qcal(\bV) = \Qcal\left(\frac{1}{\sigma^2}(\bF^{-1} - \Id)\right),
\end{align}
where $(\bF, \bG)$ is the solution of the fixed point equations
%
%
\begin{align}
    \label{eq:optim-fg}
    \bF = \sigma^2 \bA^{-1} + \bG^{-1}, \hspace{4em}
    \bG =\sigma^2 \bB^{-1} + \bF^{-1}.
\end{align}
Let $\bC \defeq \bA\bG^{-1}$. Combining both equations of \eqref{eq:optim-fg} in one leads to $\bG = \sigma^2 \bB^{-1} + (\bG^{-1} + \sigma^2 \bA^{-1})^{-1}$, which can be shown to be equivalent to 
\begin{equation}\label{eq:fixedpoint-C}
\bC^2 + \sigma^2\bC - \bA\bB = 0.
\end{equation}
Notice that since $\bA$ and $\bG^{-1}$ are positive definite, their product $\bC = \bA\bG^{-1}$ is similar to $\bA\rt\bG^{-1}\bA\rt$. Thus it has positive eigenvalues. \Cref{prop:closed_form_matrix} provides the only feasible solution of \eqref{eq:fixedpoint-C}.
\begin{proposition}
\label{prop:closed_form_matrix}
Let $\sigma^2 \geq 0$ and $\bC$ satisfying \Cref{eq:fixedpoint-C}. Then,
 \begin{align}
 \bC = \left(\bA\bB + \frac{\sigma^4}{4} \Id\right)\rt -\tfrac{\sigma^2}{2} \Id
     = \bA\rt(\bA\rt\bB\bA\rt + \tfrac{\sigma^4}{4} \Id)\rt\bA\mrt -\tfrac{\sigma^2}{2} \Id 
\label{eq:closed-form-C}.
\end{align}
\end{proposition}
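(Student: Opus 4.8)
The plan is to treat \eqref{eq:fixedpoint-C} as a matrix quadratic equation and "complete the square" in the matrix sense. First I would rewrite $\bC^2 + \sigma^2\bC - \bA\bB = 0$ by adding $\tfrac{\sigma^4}{4}\Id$ to both sides, obtaining $\bC^2 + \sigma^2\bC + \tfrac{\sigma^4}{4}\Id = \bA\bB + \tfrac{\sigma^4}{4}\Id$, i.e. $\bigl(\bC + \tfrac{\sigma^2}{2}\Id\bigr)^2 = \bA\bB + \tfrac{\sigma^4}{4}\Id$. The key subtlety is that taking a square root of a matrix is not unique in general, so I must argue that $\bC + \tfrac{\sigma^2}{2}\Id$ is the unique \emph{positive} square root of the right-hand side. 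For this I would invoke the remark already made in the text just before the statement: $\bC = \bA\bG^{-1}$ is similar to the symmetric positive definite matrix $\bA\rt\bG^{-1}\bA\rt$, hence $\bC$ is diagonalizable with strictly positive eigenvalues; therefore $\bC + \tfrac{\sigma^2}{2}\Id$ is diagonalizable with eigenvalues $\geq \tfrac{\sigma^2}{2} > 0$ (or $>0$ when $\sigma=0$, using positivity of $\bC$). On the other hand $\bA\bB$ is similar to $\bA\rt\bB\bA\rt \succeq 0$, so $\bA\bB + \tfrac{\sigma^4}{4}\Id$ is similar to an SPD matrix and has a well-defined principal square root; since the principal square root is the unique square root all of whose eigenvalues have positive real part, and $\bC + \tfrac{\sigma^2}{2}\Id$ is such a square root, we get $\bC + \tfrac{\sigma^2}{2}\Id = \bigl(\bA\bB + \tfrac{\sigma^4}{4}\Id\bigr)\rt$, which is the first displayed expression in \eqref{eq:closed-form-C}.

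For the second equality in \eqref{eq:closed-form-C}, I would use the standard similarity identity for matrix functions: for any analytic $\phi$ and invertible $\bP$, $\phi(\bP\bM\bP^{-1}) = \bP\,\phi(\bM)\,\bP^{-1}$. Writing $\bA\bB + \tfrac{\sigma^4}{4}\Id = \bA\rt\bigl(\bA\rt\bB\bA\rt + \tfrac{\sigma^4}{4}\Id\bigr)\bA\mrt$ and applying this with $\phi = (\cdot)\rt$ and $\bP = \bA\rt$ gives $\bigl(\bA\bB + \tfrac{\sigma^4}{4}\Id\bigr)\rt = \bA\rt\bigl(\bA\rt\bB\bA\rt + \tfrac{\sigma^4}{4}\Id\bigr)\rt\bA\mrt$; one small point to check here is that both $\bA\bB+\tfrac{\sigma^4}{4}\Id$ and its conjugate have spectra in $\{\Re z>0\}$ so that "the" square root is unambiguous on both sides, which follows since similar matrices have the same spectrum.

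I expect the main obstacle to be the uniqueness argument: equation \eqref{eq:fixedpoint-C} by itself has many symmetric solutions (one for each choice of sign of the square root on each eigenspace), so the proof genuinely relies on the extra structural information that $\bC$ arises as $\bA\bG^{-1}$ with $\bG \in \PD$, forcing $\bC$ to have positive spectrum and hence selecting the principal branch. I would make sure to state this as a hypothesis (it is implicit in "$\bC$ satisfying \eqref{eq:fixedpoint-C}" together with the preceding paragraph) and handle the boundary case $\sigma = 0$ separately, where the claim reduces to $\bC = (\bA\bB)\rt = \bA\rt(\bA\rt\bB\bA\rt)\rt\bA\mrt$, recovering exactly the Bures/Monge-map expression in \eqref{eq:monge_map} — a reassuring consistency check.
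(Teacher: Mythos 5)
Your proof is correct and is essentially the paper's argument repackaged more cleanly. The paper also invokes the positivity of the spectrum of $\bC = \bA\bG^{-1}$ (via similarity to $\bA\rt\bG^{-1}\bA\rt$), but it then diagonalizes $\bC$ as $\bP\Sigma\bP^{-1}$, notes $\bA\bB$ is simultaneously diagonalized, and solves the scalar quadratic $x^2+\sigma^2 x - ab = 0$ on each eigenvalue before discarding the negative root; this is just completing the square done eigenvalue by eigenvalue. Your formulation --- writing $\bigl(\bC+\tfrac{\sigma^2}{2}\Id\bigr)^2 = \bA\bB+\tfrac{\sigma^4}{4}\Id$ and appealing to uniqueness of the principal matrix square root --- reaches the same conclusion at the matrix level without descending to eigenvalues, and it makes the branch-selection argument slightly crisper. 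The second equality is handled identically in both: the paper verifies $\bigl(\bA\rt(\bA\rt\bB\bA\rt+\tfrac{\sigma^4}{4}\Id)\rt\bA\mrt\bigr)^2 = \bA\bB+\tfrac{\sigma^4}{4}\Id$, which is exactly your similarity identity $\phi(\bP\bM\bP^{-1})=\bP\phi(\bM)\bP^{-1}$ with $\phi=(\cdot)\rt$, $\bP=\bA\rt$. Your explicit remark on the $\sigma=0$ boundary case and the observation that uniqueness genuinely relies on $\bC=\bA\bG^{-1}$ having positive spectrum are sound and, if anything, slightly more careful than the paper's wording.
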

\begin{corollary}
\label{cor:closedform-potentials}
The optimal dual potentials of \eqref{eq:optim-dual} can be given in closed form by:
\begin{equation}
    \begin{split}
    \bU =  \frac{\bB}{\sigma^2}(\bC +
    \sigma^2\Id)^{-1} - \frac{\Id}{\sigma^2}
    , 
    \hspace{2em}
    \bV =  (\bC + 
    \sigma^2\Id)^{-1}\frac{\bA}{\sigma^2} - \frac{\Id}{\sigma^2}.
    \end{split}
\end{equation}
Moreover, $\bU$ and $\bV$ remain well-defined even for singular matrices $\bA$ and $\bB$.
\end{corollary}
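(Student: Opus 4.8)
The plan is to start from the closed-form characterization of the optimal potentials in \eqref{eq:optim-dual}, namely $\bU = \tfrac{1}{\sigma^2}(\bG^{-1} - \Id)$ and $\bV = \tfrac{1}{\sigma^2}(\bF^{-1} - \Id)$, and to eliminate $\bF$ and $\bG$ in favour of $\bC$ using the fixed-point relations \eqref{eq:optim-fg}, the definition $\bC = \bA\bG^{-1}$, and the quadratic identity \eqref{eq:fixedpoint-C}. Throughout this first part I would keep the standing assumption $\bA, \bB \in \PD$, so that every inverse written below exists.

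For $\bV$: from $\bC = \bA\bG^{-1}$ we get $\bG^{-1} = \bA^{-1}\bC$, and substituting this into $\bF = \sigma^2\bA^{-1} + \bG^{-1}$ factors the first equation of \eqref{eq:optim-fg} as $\bF = \bA^{-1}(\bC + \sigma^2\Id)$, hence $\bF^{-1} = (\bC + \sigma^2\Id)^{-1}\bA$; plugging this into $\bV = \tfrac{1}{\sigma^2}(\bF^{-1} - \Id)$ gives the claimed expression immediately. For $\bU$: I would rewrite \eqref{eq:fixedpoint-C} as $\bC(\bC + \sigma^2\Id) = \bA\bB$ (legitimate since $\bC$ commutes with $\bC + \sigma^2\Id$), then multiply on the left by $\bA^{-1}$ and on the right by $(\bC + \sigma^2\Id)^{-1}$ to obtain $\bA^{-1}\bC = \bB(\bC + \sigma^2\Id)^{-1}$, i.e.\ $\bG^{-1} = \bB(\bC + \sigma^2\Id)^{-1}$, and substitute into $\bU = \tfrac{1}{\sigma^2}(\bG^{-1} - \Id)$.

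It then remains to argue that both formulas still make sense when $\bA$ or $\bB$ is singular. Here I would invoke \Cref{prop:closed_form_matrix}: it gives $\bC + \sigma^2\Id = \left(\bA\bB + \tfrac{\sigma^4}{4}\Id\right)\rt + \tfrac{\sigma^2}{2}\Id$, and since $\bA\bB$ is a product of positive semi-definite matrices its eigenvalues are real and non-negative, so $\left(\bA\bB + \tfrac{\sigma^4}{4}\Id\right)\rt$ has eigenvalues at least $\tfrac{\sigma^2}{2}$ and $\bC + \sigma^2\Id$ has eigenvalues at least $\sigma^2 > 0$; in particular it is invertible for all $\bA, \bB \in \Scal^d_+$. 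Therefore the right-hand sides $\tfrac{\bB}{\sigma^2}(\bC + \sigma^2\Id)^{-1} - \tfrac{\Id}{\sigma^2}$ and $(\bC + \sigma^2\Id)^{-1}\tfrac{\bA}{\sigma^2} - \tfrac{\Id}{\sigma^2}$ are well defined even though the intermediate quantities $\bA^{-1}$, $\bF^{-1}$, $\bG^{-1}$ used to derive them are not. The only step I expect to need genuine care is the bookkeeping of the non-commuting products --- keeping $\bF^{-1} = (\bC + \sigma^2\Id)^{-1}\bA$ and $\bG^{-1} = \bB(\bC + \sigma^2\Id)^{-1}$ with the factors in exactly this order, and checking that the resulting asymmetry between the formulas for $\bU$ and $\bV$ matches the asymmetry of the roles of $(\bF,\bA)$ versus $(\bG,\bB)$ in \eqref{eq:optim-fg}; everything else is a direct substitution.
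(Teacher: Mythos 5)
Your derivation is correct and is essentially the substitution the paper leaves implicit (there is no separate appendix proof for this corollary). Both routes are natural: for $\bV$ you factor $\bF = \bA^{-1}(\bC+\sigma^2\Id)$ directly from the first fixed-point equation and the definition $\bC = \bA\bG^{-1}$, and for $\bU$ you use the quadratic identity $\bC(\bC+\sigma^2\Id)=\bA\bB$ to solve for $\bG^{-1}=\bA^{-1}\bC$ in a form where $\bA^{-1}$ disappears. The well-definedness argument is also sound: the eigenvalues of $\bA\bB$ are real and non-negative for $\bA,\bB\in\Scal^d_+$ (since $\bA\bB$ shares its spectrum with $\bA^{1/2}\bB\bA^{1/2}$), so $(\bA\bB+\tfrac{\sigma^4}{4}\Id)^{1/2}$ has eigenvalues $\geq\tfrac{\sigma^2}{2}$ and $\bC+\sigma^2\Id$ has eigenvalues $\geq\sigma^2>0$, hence is invertible. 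One small point you might add for completeness: the expression $\bG^{-1}=\bB(\bC+\sigma^2\Id)^{-1}$ must be symmetric (as the inverse of a symmetric matrix), which follows from the standard identity $\bB\,f(\bA\bB)=f(\bB\bA)\,\bB$ applied to the matrix square root; this is a sanity check rather than a gap, but it confirms the right-vs-left placement of $\bA$ and $\bB$ in the two formulas is consistent.
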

%
%
%
\paragraph{Optimal transportation plan and $\OT_\sigma$.}
Using \Cref{cor:closedform-potentials} and \eqref{eq:optim-dual}, \Cref{eq:transportion_plan} leads to a closed form expression of $\pi$. To conclude the proof of \Cref{thm:otclosedform}, we introduce lemma \ref{lemma:optim-loss} that computes the $\OT_\sigma$ loss at optimality. Detailed technical proofs are provided in the appendix.
\begin{lemma}
\label{lemma:optim-loss}
Let $\bA, \bB, \bC$ be invertible matrices such that 
$\bH = \left(\begin{smallmatrix}
       \bA & \bC \\
         \bC^\top  & \bB\\
    \end{smallmatrix}\right) \succ 0$.
Let $\alpha = \Ncal(0, \bA), \beta = \Ncal(0, \bB)$, and $\pi = \Ncal(0, \bH)$. Then, 
\begin{align}
    &\int_{\RR^d\times\RR^d} \|x - y\|^2 \d \pi(x, y) = \tr(\bA) + \tr(\bB) - 2\tr(\bC), \label{eq:optim-transport}\\
   &\KL\left(\pi\|\alpha\otimes\beta\right) = \tfrac{1}{2}\left( \log\det \bA + \log\det \bB - \log\det \left(\begin{smallmatrix} \bA & \bC \\ \bC^T & \bB \end{smallmatrix}\right) \right).
   \label{eq:optim-ent}
\end{align}
\end{lemma}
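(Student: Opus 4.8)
The plan is to reduce both identities to standard Gaussian moment computations, exploiting the explicit block structure of $\bH$. For \eqref{eq:optim-transport}, I would first expand $\|x-y\|^2 = \|x\|^2 - 2 x^\top y + \|y\|^2$ and integrate term by term against $\pi = \Ncal(0,\bH)$. Since the marginals of $\pi$ on the first and second blocks are $\alpha = \Ncal(0,\bA)$ and $\beta = \Ncal(0,\bB)$, we immediately get $\int \|x\|^2 \dd\pi = \tr(\bA)$ and $\int \|y\|^2 \dd\pi = \tr(\bB)$. For the cross term, the covariance block between the $x$- and $y$-coordinates is exactly $\bC$ (equivalently $\Esp_\pi[x y^\top] = \bC$), so $\int x^\top y \, \dd\pi = \tr(\Esp_\pi[y x^\top]) = \tr(\bC)$. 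Collecting the three pieces gives $\tr(\bA) + \tr(\bB) - 2\tr(\bC)$, which is \eqref{eq:optim-transport}.

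For \eqref{eq:optim-ent}, I would write the KL divergence between the two zero-mean Gaussians $\pi = \Ncal(0,\bH)$ on $\RR^{2d}$ and $\alpha\otimes\beta = \Ncal\!\left(0, \left(\begin{smallmatrix}\bA & 0 \\ 0 & \bB\end{smallmatrix}\right)\right)$ using the closed-form expression for KL between Gaussians: for $\Ncal(0,\Sigma_1)$ and $\Ncal(0,\Sigma_2)$ in dimension $n$,
\begin{equation*}
\KL = \tfrac{1}{2}\left( \tr(\Sigma_2^{-1}\Sigma_1) - n + \log\det\Sigma_2 - \log\det\Sigma_1 \right).
\end{equation*}
Here $n = 2d$, $\Sigma_1 = \bH$, and $\Sigma_2 = \left(\begin{smallmatrix}\bA & 0 \\ 0 & \bB\end{smallmatrix}\right)$, so $\log\det\Sigma_2 = \log\det\bA + \log\det\bB$ and $\log\det\Sigma_1 = \log\det\bH$. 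The remaining work is to check that the trace term cancels: $\tr(\Sigma_2^{-1}\Sigma_1) = \tr\!\left(\left(\begin{smallmatrix}\bA^{-1} & 0 \\ 0 & \bB^{-1}\end{smallmatrix}\right)\left(\begin{smallmatrix}\bA & \bC \\ \bC^\top & \bB\end{smallmatrix}\right)\right) = \tr(\Id_d) + \tr(\Id_d) = 2d = n$, using that the diagonal blocks of the product are $\bA^{-1}\bA = \Id$ and $\bB^{-1}\bB = \Id$ and that the trace only sees diagonal blocks. Hence the first and second terms cancel and we are left with $\tfrac{1}{2}(\log\det\bA + \log\det\bB - \log\det\bH)$, which is \eqref{eq:optim-ent}.

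The computation is essentially routine; the only points requiring care are the hypotheses. One needs $\bH \succ 0$ (given) so that $\pi$ is a genuine nondegenerate Gaussian and $\log\det\bH$ is finite; one needs $\bA, \bB$ invertible (given) so that $\alpha\otimes\beta$ is nondegenerate and $\Sigma_2^{-1}$ exists; and one should note that $\bH \succ 0$ forces its diagonal blocks $\bA$ and $\bB$ to be positive definite, so all determinants appearing are of positive-definite matrices and the logarithms are well-defined reals. The mild subtlety — if one wants to be self-contained rather than quoting the Gaussian KL formula — is deriving that formula: expand $\log\frac{\dd\pi}{\dd(\alpha\otimes\beta)}(z) = -\tfrac12 z^\top(\bH^{-1} - \Sigma_2^{-1})z - \tfrac12(\log\det\bH - \log\det\Sigma_2)$ and integrate against $\pi$, using $\Esp_\pi[z^\top \bM z] = \tr(\bM\bH)$ for the quadratic term; this again produces the cancelling trace term $\tfrac12(\tr(\bH^{-1}\bH) - \tr(\Sigma_2^{-1}\bH)) = \tfrac12(2d - 2d) = 0$. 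I expect no real obstacle here — the block-diagonal structure of $\alpha\otimes\beta$ is precisely what makes every trace collapse to $2d$.
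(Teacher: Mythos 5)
Your proposal is correct and follows essentially the same route as the paper: expand $\|x-y\|^2$ and use the marginals and cross-covariance $\Esp_\pi[xy^\top]=\bC$ for \eqref{eq:optim-transport}, then apply the closed-form Gaussian KL formula for \eqref{eq:optim-ent}, observing that $\tr(\Sigma_2^{-1}\bH) = 2d$ cancels the dimension term. The additional remarks on well-posedness (that $\bH\succ 0$ forces $\bA,\bB\succ 0$) are a fine sanity check but do not change the argument.
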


%
\paragraph{Properties of $\OT_{\sigma}$.}
%
\Cref{thm:otclosedform} shows that $\pi$ has a Gaussian density. \Cref{prop:bures_sinkhorn_matrix_problem} allows to reformulate this optimization problem over couplings in $\RR^{d\times d}$ with a positivity constraint. 
\begin{proposition}\label{prop:bures_sinkhorn_matrix_problem}
Let $\alpha = \Ncal(0, \bA), \beta = \Ncal(0, \bB)$, and $\sigma^2 > 0$. Then, 
\begin{align}
\OT_\sigma(\alpha, \beta) &=\hspace{-1.5em} \underset{\bC : \left(\begin{smallmatrix} \bA & \bC \\ \bC^T & \bB \end{smallmatrix}\right) \geq 0}{\min}\! \Big\{ \tr(\bA) + \tr(\bB) - 2\tr(\bC) + \sigma^2( \log\det \bA\bB -  \log\det\left(\begin{smallmatrix} \bA & \bC \\ \bC^T & \bB \end{smallmatrix}\right)) \Big\}\label{eq:ent_bures}\\
 &= \underset{\bK \in \RR^{d\times d} : \|\bK\|_{\text{op}} \leq 1}{\min} \tr\bA + \tr\bB - 2 \tr{\bA\rt \bK \bB\rt} - \sigma^2 \ln \det(\Id - \bK \bK^\top).\label{eq:primal_K}
\end{align}
Moreover, both \eqref{eq:ent_bures} and \eqref{eq:primal_K} are convex problems.
\end{proposition}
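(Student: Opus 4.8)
The plan is to establish the two equalities separately and then argue convexity of the final form. For the first equality \eqref{eq:ent_bures}, I would start from \Cref{thm:otclosedform}, which tells us that the optimal plan $\pi^\star$ is the centered Gaussian with covariance $\bH^\star = \left(\begin{smallmatrix} \bA & \bC_\sigma \\ \bC_\sigma^\top & \bB \end{smallmatrix}\right)$. The key observation is that among all couplings $\pi \in \Pi(\alpha,\beta)$, the objective in \eqref{eq:ent_ot} is minimized by a Gaussian: indeed, replacing any coupling by the Gaussian with the same first two moments leaves the transport term $\int \|x-y\|^2 \dd\pi$ unchanged (it only depends on second moments) and cannot increase $\KL(\pi\|\alpha\otimes\beta)$, since the Gaussian maximizes entropy under second-moment constraints while $\alpha\otimes\beta$ is itself Gaussian — a standard computation shows $\KL(\pi\|\alpha\otimes\beta) = \KL(\pi_{\mathrm{gauss}}\|\alpha\otimes\beta) + \KL(\pi\|\pi_{\mathrm{gauss}}) \geq \KL(\pi_{\mathrm{gauss}}\|\alpha\otimes\beta)$. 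Hence the minimization over $\Pi(\alpha,\beta)$ reduces to a minimization over Gaussian couplings, i.e. over cross-covariances $\bC$ such that $\bH \succeq 0$ with the prescribed diagonal blocks. Plugging \Cref{lemma:optim-loss} (equations \eqref{eq:optim-transport} and \eqref{eq:optim-ent}) into \eqref{eq:ent_ot} yields exactly the objective $\tr\bA+\tr\bB-2\tr\bC+\sigma^2(\log\det\bA\bB - \log\det\bH)$; the constant $d\sigma^2$-type terms from $2\sigma^2\KL$ are absorbed correctly since $\KL$ of two Gaussians with matching marginals has no extra additive constant once written in log-det form. One should check the boundary case where $\bH$ is singular: there $\log\det\bH = -\infty$, so the objective is $+\infty$, consistent with the closed interval constraint $\bH \geq 0$ being effectively $\bH \succ 0$ at the optimum.

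For the second equality \eqref{eq:primal_K}, I would perform the change of variables $\bC = \bA^{1/2}\bK\bB^{1/2}$. The Schur complement criterion gives $\bH \succeq 0 \iff \bB \succeq \bC^\top\bA^{-1}\bC \iff \Id \succeq \bB^{-1/2}\bC^\top\bA^{-1}\bC\bB^{-1/2} = \bK^\top\bK$, i.e. $\|\bK\|_{\mathrm{op}} \leq 1$. Under this substitution, $\tr\bC = \tr(\bA^{1/2}\bK\bB^{1/2}) = \tr(\bA^{1/2}\bK\bB^{1/2})$, matching the term $\tr(\bA^{1/2}\bK\bB^{1/2})$ in \eqref{eq:primal_K} (modulo the harmless cyclic reordering to $\bA^{1/2}\bK\bB^{1/2}$ versus $\bB^{1/2}\cdot$; the paper writes $\tr\bA^{1/2}\bK\bB^{1/2}$). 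For the log-det term, $\log\det\bH = \log\det\bA + \log\det(\bB - \bC^\top\bA^{-1}\bC) = \log\det\bA + \log\det\bB + \log\det(\Id - \bK^\top\bK)$, using the Schur determinant identity and factoring out $\bB^{1/2}$. Therefore $\log\det\bA\bB - \log\det\bH = -\log\det(\Id - \bK^\top\bK) = -\log\det(\Id - \bK\bK^\top)$ (the last by Sylvester's determinant identity). Substituting gives precisely \eqref{eq:primal_K}.

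For convexity, \eqref{eq:ent_bures} is convex because $\bC \mapsto \tr\bA+\tr\bB-2\tr\bC$ is affine, the constraint set $\{\bC : \bH \geq 0\}$ is convex (it is the preimage of the PSD cone under an affine map), and $-\log\det$ is convex on $\Scal^d_{++}$ precomposed with the affine map $\bC\mapsto\bH$. For \eqref{eq:primal_K}: the operator-norm ball $\{\|\bK\|_{\mathrm{op}}\leq 1\}$ is convex, the term $-2\tr(\bA^{1/2}\bK\bB^{1/2})$ is linear in $\bK$, and $\bK \mapsto -\sigma^2\log\det(\Id - \bK\bK^\top)$ is convex — this is the one genuinely nontrivial point, and I expect it to be the main obstacle. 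The cleanest route is to note $-\log\det(\Id-\bK\bK^\top) = -\log\det(\Id-\bK^\top\bK)$ and observe that on the unit ball this equals the composition of the convex nonincreasing function $t\mapsto -\log t$ (extended to $+\infty$ below $0$... but here we need monotone-and-convex composed with concave) applied blockwise; more robustly, write $\Id - \bK\bK^\top \succeq 0$ as the Schur complement of $\left(\begin{smallmatrix}\Id & \bK\\ \bK^\top & \Id\end{smallmatrix}\right)$, so $-\log\det(\Id-\bK\bK^\top) = -\log\det\left(\begin{smallmatrix}\Id & \bK\\ \bK^\top & \Id\end{smallmatrix}\right) + \log\det\Id = -\log\det\left(\begin{smallmatrix}\Id & \bK\\ \bK^\top & \Id\end{smallmatrix}\right)$, which is manifestly convex in $\bK$ as $-\log\det$ of an affine (in $\bK$) matrix-valued function. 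That reduction disposes of the obstacle, and convexity of \eqref{eq:primal_K} follows. Alternatively, convexity of \eqref{eq:primal_K} is immediate from that of \eqref{eq:ent_bures} once we note the change of variables $\bC=\bA^{1/2}\bK\bB^{1/2}$ is affine and invertible, carrying one convex problem to the other.
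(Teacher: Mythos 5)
Your proof is correct, and on the first equality it actually fills a step the paper leaves implicit. The paper derives \eqref{eq:ent_bures} by simply invoking \Cref{lemma:optim-loss}, tacitly relying on \Cref{thm:otclosedform} to justify restricting the minimization in \eqref{eq:ent_ot} to Gaussian couplings; you instead supply a direct, self-contained reduction via the Pythagoras-type identity $\KL(\pi\|\alpha\otimes\beta)=\KL(\pi_G\|\alpha\otimes\beta)+\KL(\pi\|\pi_G)$, which holds because $\log\left(\mathrm{d}\pi_G/\mathrm{d}(\alpha\otimes\beta)\right)$ is a quadratic polynomial and $\pi$, $\pi_G$ share first and second moments (and for $\pi$ with singular covariance both sides are $+\infty$). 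This makes \eqref{eq:ent_bures} logically independent of the Sinkhorn fixed-point analysis, which is a genuine expository simplification. The change of variables $\bC=\bA^{1/2}\bK\bB^{1/2}$ for \eqref{eq:primal_K}, the Schur-complement characterization $\|\bK\|_{\mathrm{op}}\le 1$ of the PSD constraint, and the Sylvester determinant identity all coincide with the paper's derivation. For the convexity of \eqref{eq:primal_K}, you recover the paper's argument that $\bK\mapsto\log\det\left(\begin{smallmatrix}\Id & \bK\\ \bK^\top & \Id\end{smallmatrix}\right)$ is concave (being $\log\det$ precomposed with an affine matrix map), and you additionally note the cleaner route that the paper does not take: since $\bK\mapsto\bA^{1/2}\bK\bB^{1/2}$ is an invertible linear map when $\bA,\bB\succ 0$, the convexity of \eqref{eq:primal_K} follows at once from that of \eqref{eq:ent_bures}.
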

We now study the convexity and differentiability of $\OT_{\sigma}$, which are more conveniently derived from the dual problem of \eqref{eq:ent_bures} given as a positive definite program:
\begin{proposition}\label{prop:dual_problem}
The dual problem of \eqref{eq:ent_bures} can be written with no duality gap as
\begin{align}\label{eq:dual_ent_bures}
&\underset{\bF, \bG \succ 0}{\max}\Big\{
   \dotp{\Id - \bF}{\bA}  + \dotp{\Id - \bG}{\bB} + \sigma^2\log\det\left(\frac{\bF\bG - \Id}{\sigma^4}\right)  + \sigma^2 \log\det\bA \bB  + 2d\sigma^2\Big\}.
\end{align}
\end{proposition}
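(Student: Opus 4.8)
The plan is to derive \eqref{eq:dual_ent_bures} as the Lagrangian dual of the matrix problem \eqref{eq:ent_bures}, and to argue strong duality from the convexity already established in \Cref{prop:bures_sinkhorn_matrix_problem} together with strict feasibility. First I would rewrite \eqref{eq:ent_bures} as a minimization over the block matrix $\bH = \left(\begin{smallmatrix} \bA & \bC \\ \bC^\top & \bB\end{smallmatrix}\right)$ with $\bH \succeq 0$ and the upper-left and lower-right blocks fixed to $\bA$ and $\bB$; the off-diagonal block $\bC$ is the only free variable, and the objective is $\tr\bA + \tr\bB - 2\tr\bC + \sigma^2\log\det\bA\bB - \sigma^2\log\det\bH$. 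Introduce a symmetric multiplier $\bM = \left(\begin{smallmatrix}\bF & -\Id \\ -\Id & \bG\end{smallmatrix}\right) \succeq 0$ for the constraint $\bH \succeq 0$ (the precise block structure of $\bM$ is dictated by which entries of $\bH$ are free: since $\bA$ and $\bB$ are fixed, only the coupling of $\bM$ to the off-diagonal block of $\bH$ matters, so we may normalize the off-diagonal blocks of $\bM$ to $-\Id$ and keep $\bF, \bG$ as the genuine dual variables). The Lagrangian is $\tr\bA + \tr\bB - 2\tr\bC + \sigma^2\log\det\bA\bB - \sigma^2\log\det\bH - \dotp{\bM}{\bH}$.

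Next I would carry out the inner minimization over $\bH$ (equivalently over $\bC$). The term $-\sigma^2\log\det\bH - \dotp{\bM}{\bH}$, minimized over $\bH \succ 0$, is essentially a matrix log-determinant conjugate: setting the gradient $-\sigma^2\bH^{-1} - \bM$ — wait, one must be careful because only $\bC$ varies; taking $\partial/\partial\bC$ of $-2\tr\bC - \sigma^2\log\det\bH - \dotp{\bM}{\bH}$ and using $\partial\log\det\bH/\partial\bC = 2(\bH^{-1})_{12}$ gives the stationarity condition relating $(\bH^{-1})_{12}$ to $\Id$ and the multiplier blocks. Plugging the optimal $\bH$ back and simplifying the $\log\det$ via the Schur complement identity $\det\bH = \det\bA\,\det(\bB - \bC^\top\bA^{-1}\bC)$ should collapse the expression; the constant $2d\sigma^2$ and the factor $\sigma^{-4}$ inside $\log\det$ come out of this algebra. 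The cleanest route is probably to guess the form of the dual from \eqref{eq:dual_ent_bures} and verify, for any primal-feasible $\bC$ and dual-feasible $(\bF,\bG)$, that weak duality $\text{primal} \geq \text{dual}$ holds by a direct application of the inequality $\log\det\bX \leq \dotp{\bX}{\bY} - \log\det\bY - d$ (Fenchel–Young for $-\log\det$) to $\bX = \bH$ and $\bY = \bM/\sigma^2$, and then exhibit the optimal pair to show equality — the optimal $\bF, \bG$ being exactly those arising from \eqref{eq:optim-fg} and \eqref{eq:changeofvariable}, which the earlier propositions already identify.

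For the "no duality gap" claim I would invoke Slater's condition: the primal \eqref{eq:ent_bures} is convex (\Cref{prop:bures_sinkhorn_matrix_problem}) and strictly feasible, since $\bC = \bC_\sigma$ from \Cref{thm:otclosedform} yields $\bH \succ 0$ (both $\bA, \bB$ positive definite and the Schur complement positive by construction of $\bD_\sigma$), so the constraint $\bH \succeq 0$ holds with strict inequality and conic strong duality applies. The main obstacle I anticipate is purely bookkeeping: correctly handling the fact that $\bA$ and $\bB$ are \emph{fixed} rather than optimized, so that the dual variable is not a full PSD matrix multiplier on $\bH$ but effectively only its action on the free block $\bC$ — this is what lets $\bF$ and $\bG$ range over all of $\PD$ independently in \eqref{eq:dual_ent_bures} while the off-diagonal multiplier is pinned. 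Once that reduction is set up correctly, matching constants (the $2d\sigma^2$ and the $\sigma^4$ normalization) is a routine consequence of the Schur-complement determinant identity and the scalar identity $-\log\det(\sigma^{-4}\bX) = -\log\det\bX + 4d\log\sigma$.
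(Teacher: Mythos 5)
Your overall strategy (convex duality, exploiting the conjugate of $-\log\det$) is sound, but the Lagrangian framing at the start is off in a way worth flagging. You introduce $\bM = \left(\begin{smallmatrix}\bF & -\Id \\ -\Id & \bG\end{smallmatrix}\right)$ as a multiplier for the constraint $\bH \succeq 0$, but this constraint is never active: the term $-\sigma^2\log\det\bH$ is a barrier that already confines the feasible set to $\bH \succ 0$, so by complementary slackness any multiplier for $\bH \succeq 0$ would vanish at the optimum and contribute nothing to the dual. What should be dualized are the two \emph{equality} constraints pinning the diagonal blocks of $\bH$ to $\bA$ and $\bB$; that is exactly where $\bF$ and $\bG$ come from. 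The $-\Id$ in the off-diagonal of your $\bM$ is not a normalization choice but the coefficient of the linear term $-\tr\bX_2 - \tr\bX_3$ already present in the objective. This is precisely the paper's route: it writes the problem over a free block matrix $\bX \succ 0$, keeps the barrier as part of the objective, dualizes the linear equality constraints $\bX_1 = \bA$, $\bX_4 = \bB$ via a linear operator $\Hcal$, and computes the Fenchel conjugate $\Fcal^\star$ — the matrix $\left(\begin{smallmatrix}\bF & -\Id \\ -\Id & \bG\end{smallmatrix}\right)$ then emerges as $-\left(\bY + \left(\begin{smallmatrix} 0 & \Id\\ \Id & 0\end{smallmatrix}\right)\right)$ with $\bY = -\Hcal^\star(\bF,\bG)$.

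That said, the fallback you identify as the ``cleanest route'' is genuinely valid and different in spirit from the paper's proof. Applying the Fenchel--Young inequality $\log\det\bX \leq \dotp{\bX}{\bY} - \log\det\bY - 2d$ (mind that the ambient dimension is $2d$, not $d$) with $\bX = \bH$ and $\bY = \bM/\sigma^2$ makes the $\tr\bC$ terms cancel and gives weak duality for every feasible $\bC$ and every $\bF,\bG$ with $\bM \succ 0$ — which is the correct implicit domain, tighter than the stated $\bF, \bG \succ 0$ and matching the paper's domain after unwinding $\Fcal^\star$. Plugging in the pair $(\bF,\bG)$ from \eqref{eq:optim-fg}, for which $\bH^{-1} = \bM/\sigma^2$ exactly, achieves equality. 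This gives strong duality constructively, without invoking Fenchel--Rockafellar or Slater's condition at all, and is arguably more self-contained than the paper's appeal to ``convexity hence strong duality.'' The paper's approach, conversely, derives the dual systematically from the conjugate rather than guessing-and-verifying, which is more robust if you do not already know the target formula. Both are correct; your second route would be a legitimate replacement for the paper's proof once the Lagrangian motivation is excised or corrected to dualize the block-equality constraints.
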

\citet{feydy2019} showed that on compact spaces, the gradient of $\OT_{\sigma}$ is given by the optimal dual potentials. This result was later extended by \citet{janati20} to sub-Gaussian measures with unbounded supports. The following proposition re-establishes this statement for Gaussians. 
\begin{proposition}\label{prop:envelop_theorem}
Assume $\sigma > 0$ and consider the pair $\bU, \bV$ of \Cref{cor:closedform-potentials}. Then
\begin{itemize}
\item[(i)] The optimal pair $(\bF^*, \bG^*)$ of \eqref{eq:dual_ent_bures} is a solution to the fixed point problem \eqref{eq:optim-fg},
\item[(ii)]
$\mathfrak{B}_{\sigma^2}$ is differentiable and: $\nabla \mathfrak{B}_{\sigma^2}(\bA, \bB) = -(\sigma^2\bU, \sigma^2\bV)$. Thus:
$\nabla_\bA \mathfrak{B}_{\sigma^2}(\bA, \bB) = \Id - \bB\rt\left((\bB\rt\bA\bB\rt + \frac{\sigma^4}{4}\Id)\rt + \frac{\sigma^2}{2}\Id\right)^{-1}\bB\rt$,
\item[(iii)] $(\bA, \bB) \mapsto \mathfrak{B}_{\sigma^2}(\bA, \bB)$ is convex in $\bA$ and in $\bB$ but not jointly.
\item[(iv)] For a fixed $\bB$ with its spectral decomposition $\bB = \bP\Sigma\bP^\top$, the function $\phi_\bB: \bA \mapsto \mathfrak{B}_{\sigma^2}(\bA, \bB)$ is minimized at $\bA_0 = \bP(\Sigma - \sigma^2\Id)_+\bP^\top$ where the thresholding operator $_+$ is defined by $x_+ = \max(x, 0)$ for any $x \in \RR$ and extended element-wise to diagonal matrices.
\end{itemize}
\end{proposition}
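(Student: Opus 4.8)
The plan is to treat the four claims in sequence, using the dual formulation \eqref{eq:dual_ent_bures} as the common source. For (i), I would write the first-order optimality conditions of the concave program \eqref{eq:dual_ent_bures} in $(\bF, \bG) \succ 0$. Differentiating $\sigma^2 \log\det(\bF\bG - \Id)$ with respect to $\bF$ gives $\sigma^2 \bG(\bF\bG - \Id)^{-1}$, and setting the $\bA$-gradient to zero yields $\bA = \sigma^2 \bG(\bF\bG - \Id)^{-1}$, i.e. $\bF\bG - \Id = \sigma^2 \bG \bA^{-1}$, which rearranges to $\bF = \sigma^2 \bA^{-1} + \bG^{-1}$; symmetrically for $\bG$. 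That is exactly \eqref{eq:optim-fg}. Since \eqref{eq:dual_ent_bures} is concave (the log-det of the Schur-complement-type term is concave on the relevant domain, which I would justify via \Cref{prop:dual_problem}'s no-gap claim and the convexity of \eqref{eq:ent_bures}), the stationary point is the unique maximizer, so $(\bF^*, \bG^*) = (\bF, \bG)$.

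For (ii), I would invoke the envelope/Danskin theorem: because $\mathfrak{B}_{\sigma^2}(\bA,\bB)$ equals the value of \eqref{eq:dual_ent_bures}, whose objective is smooth and jointly in $(\bA, \bB)$ \emph{linear} in $(\bA,\bB)$ for fixed $(\bF,\bG)$ (the only $\bA,\bB$ dependence outside the maximization is $\dotp{\Id - \bF}{\bA} + \dotp{\Id-\bG}{\bB} + \sigma^2\log\det\bA\bB$), and the maximizer $(\bF^*,\bG^*)$ is unique, $\mathfrak{B}_{\sigma^2}$ is differentiable with $\nabla_\bA \mathfrak{B}_{\sigma^2} = \Id - \bF^* + \sigma^2 \bA^{-1}$. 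By \eqref{eq:changeofvariable} this is precisely $-\sigma^2 \bU$, and substituting the closed form of $\bU$ from \Cref{cor:closedform-potentials} together with the factored form of $\bC$ in \eqref{eq:closed-form-C} gives the stated expression. I need to be slightly careful that $\log\det\bA$ is only defined for $\bA\succ0$, which is fine under the standing assumption, and that differentiability is genuine and not merely one-sided — this follows from uniqueness of the dual optimum, which (i) provides.

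For (iii), convexity in $\bA$ (resp. $\bB$) alone: fixing $\bB$, \eqref{eq:ent_bures} exhibits $\mathfrak{B}_{\sigma^2}(\cdot,\bB)$ as a partial minimization of a function jointly convex in $(\bA, \bC)$ — the $\tr\bA - 2\tr\bC$ part is linear and $-\sigma^2\log\det\left(\begin{smallmatrix}\bA & \bC\\ \bC^\top & \bB\end{smallmatrix}\right)$ is convex in $(\bA,\bC)$ — over a convex constraint set, hence convex in $\bA$; symmetrically in $\bB$. For the failure of joint convexity I would exhibit a simple $1$-dimensional counterexample (e.g. take $\bA=\bB=t$ scalar and check the second derivative of $2t - \bD_\sigma + \sigma^2\log(\bD_\sigma+\sigma^2)$ in $t$ changes sign), or argue via the known non-joint-convexity of the Bures term recovered as $\sigma\to0$.

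For (iv), with $\bB = \bP\Sigma\bP^\top$ fixed, set the gradient from (ii) to zero: $\Id = \bB\rt\big((\bB\rt\bA\bB\rt + \tfrac{\sigma^4}{4}\Id)\rt + \tfrac{\sigma^2}{2}\Id\big)^{-1}\bB\rt$. Rearranging gives $(\bB\rt\bA\bB\rt + \tfrac{\sigma^4}{4}\Id)\rt = \bB - \tfrac{\sigma^2}{2}\Id$, which forces $\bB - \tfrac{\sigma^2}{2}\Id \succeq 0$ for a solution to exist; squaring and solving for $\bA$ yields, in the eigenbasis $\bP$, the diagonal relation $\lambda_i(\bA) = \lambda_i(\bB) - \sigma^2$ coordinatewise. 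When $\lambda_i(\bB) < \sigma^2$ the interior stationarity is infeasible and, since $\phi_\bB$ is convex (by (iii)) and by (ii) has gradient with $i$-th eigen-component of sign matching $\lambda_i(\bB)-\sigma^2-\lambda_i(\bA)$, the constrained minimizer over $\bA \succeq 0$ sits at the boundary $\lambda_i(\bA)=0$; assembling gives $\bA_0 = \bP(\Sigma - \sigma^2\Id)_+\bP^\top$. I expect the main obstacle to be (ii): making the envelope-theorem application fully rigorous — in particular verifying that the supremum in \eqref{eq:dual_ent_bures} is attained, that the optimizer is unique and varies continuously, and that the domain constraint $\bF\bG \succ \Id$ does not become active — rather than the algebra, which is routine once the closed forms of \Cref{prop:closed_form_matrix} and \Cref{cor:closedform-potentials} are in hand.
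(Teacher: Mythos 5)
Your strategy for (i), (ii) and (iv) is essentially the paper's: the first-order conditions of the concave dual in $(\bF,\bG)$ give the fixed-point system, Danskin's theorem yields $\nabla_\bA\mathfrak{B}_{\sigma^2} = \Id - \bF^* + \sigma^2\bA^{-1}$, and (iv) is settled by gradient vanishing in the interior combined with a first-order (variational-inequality/KKT) argument at the boundary of $\Scal^d_+$. The paper spells out the boundary case by directly verifying $\langle \bH - \bA_0, \nabla\phi_\bB(\bA_0)\rangle \geq 0$ for all $\bH\in\Scal^d_+$; your eigenvalue-sign heuristic is gesturing at the same thing, though you should check it componentwise since the gradient only diagonalizes in $\bP$ \emph{at} $\bA_0$ (because $\bA_0$ commutes with $\bB$).

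For (iii) your route is genuinely different and more elegant than the paper's, which computes the Hessian of $\bA\mapsto\mathfrak{B}_{\sigma^2}(\bA,\bB)$ through a chain-rule decomposition and a Sylvester equation. However, as written your argument has a real gap: you describe the inner objective of \eqref{eq:ent_bures}, for fixed $\bB$, as ``linear in $(\bA,\bC)$ plus $-\sigma^2\log\det\left(\begin{smallmatrix}\bA&\bC\\ \bC^\top&\bB\end{smallmatrix}\right)$,'' but you have dropped the term $\sigma^2\log\det\bA$ that also appears in \eqref{eq:ent_bures}. That term is \emph{concave} in $\bA$, so ``linear plus convex block-$\log\det$'' does not immediately give joint convexity in $(\bA,\bC)$. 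The fix is to fold the two $\log\det$ terms together via the Schur complement,
\begin{align*}
\sigma^2\log\det\bA - \sigma^2\log\det\left(\begin{smallmatrix}\bA&\bC\\ \bC^\top&\bB\end{smallmatrix}\right) = -\sigma^2\log\det\bigl(\bB - \bC^\top\bA^{-1}\bC\bigr),
\end{align*}
and then invoke joint convexity of the matrix-fractional map $(\bA,\bC)\mapsto\bC^\top\bA^{-1}\bC$ together with concavity and Loewner-monotonicity of $\log\det$: the map $(\bA,\bC)\mapsto -\log\det(\bB-\bC^\top\bA^{-1}\bC)$ is then jointly convex, and adding the linear trace terms gives a jointly convex objective over the (convex) constraint set, so partial minimization over $\bC$ preserves convexity in $\bA$. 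Once this is inserted, your (iii) is both correct and considerably shorter than the paper's Hessian computation; the 1D counterexample to joint convexity is the same one the paper uses.

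One cosmetic remark on (ii): the dual objective is not ``linear'' in $(\bA,\bB)$ for fixed $(\bF,\bG)$ because of $\sigma^2\log\det\bA\bB$ — it is merely differentiable, which is all Danskin requires, so this does not affect the proof but should not be stated as linearity.
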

When $\bA$ and $\bB$ are not singular, by letting $\sigma\to 0$ in $\nabla_\bA \mathfrak{B}_{\sigma^2}(\bA, \bB)$, we recover the gradient of the Bures metric given in \eqref{eq:bures-gradient}. Moreover, (iv) illustrates the entropy bias of $\mathfrak{B}_{\sigma^2}$. \citet{feydy2019} showed that it can be circumvented by considering the Sinkhorn divergence: 
\begin{align}
    \label{eq:def-div}
S_\sigma: (\alpha, \beta) \mapsto \OT_{\sigma}(\alpha, \beta) - \frac{1}{2}( \OT_{\sigma}(\alpha, \alpha) + \OT_{\sigma}(\beta, \beta))
\end{align}
which is non-negative and equals 0 if and only if $\alpha = \beta$. Using the differentiability and convexity of $S_\sigma$ on sub-Gaussian measures~\citep{janati20}, we conclude this section by showing that the debiased Sinkhorn barycenter of Gaussians remains Gaussian:
\begin{theorem}
\label{thm:barycenters}
Consider the restriction of $\OT_\sigma$ to the set of sub-Gaussian measures $\Gcal \defeq \{\mu \in \Pcal_2 | \exists q > 0,\, \Esp_\mu(e^{q\|X\|^2}) < +\infty\}$ and let $K$ Gaussian measures $\alpha_k \sim \Ncal(\ba_k, \bA_k)$ with a sequence of positive weights $(w_k)_k$ summing to 1. Then, the weighted debiased barycenter defined by:
\begin{equation}
\label{eq:def-bar}
    \beta \defeq \argmin_{\beta \in \Gcal} \sum_{k=1}w_k S_\sigma(\alpha_k, \beta)
\end{equation}
is a Gaussian measure given by $\Ncal\left(\sum_{k=1}^K w_k\ba_k, \bB\right)$ where $\bB \in \Scal^d_+$ is a solution of the equation:
\begin{equation}
    \label{eq:bar-fixed-point}
    \sum_{k=1}^K w_k (\bB\rt \bA_k\bB\rt + \frac{\sigma^4}{4}\Id)\rt = (\bB^2 + \frac{\sigma^4}{4}\Id)\rt
\end{equation}
\end{theorem}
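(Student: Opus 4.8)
The plan is to combine three ingredients: (a) the differentiability and convexity of $S_\sigma$ on $\Gcal$ established by \citet{janati20}, which gives a first-order optimality condition for \eqref{eq:def-bar}; (b) the explicit gradient formula of $\mathfrak{B}_{\sigma^2}$ from \Cref{prop:envelop_theorem}(ii); and (c) a self-consistency argument showing that the minimizer must itself be Gaussian. First I would treat the means. Since $\OT_\sigma$ splits as $\|\ba-\bb\|^2 + \mathfrak{B}^2_\sigma(\bA,\bB)$ by \eqref{eq:means_plus_bures_sk} (and the debiasing terms $\OT_\sigma(\alpha_k,\alpha_k), \OT_\sigma(\beta,\beta)$ contribute nothing depending on the cross-means), the mean of $\beta$ enters the objective only through $\sum_k w_k \|\ba_k - \bb\|^2$, which is minimized at the weighted average $\bb = \sum_k w_k \ba_k$; this decouples cleanly from the covariance part.

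Next I would argue that the optimal $\beta$ is Gaussian. The natural approach is to fix the covariance: for any candidate $\beta \in \Gcal$ with covariance $\bB$, replacing $\beta$ by the Gaussian $\Ncal(\sum w_k\ba_k, \bB)$ does not increase $\sum_k w_k S_\sigma(\alpha_k,\beta)$. This should follow from the variational structure — one shows that among all measures in $\Gcal$ with a prescribed covariance, a Gaussian minimizes $\OT_\sigma(\alpha_k,\cdot)$ (the entropic term $\KL(\pi\|\alpha_k\otimes\beta)$ is controlled, and Gaussians maximize entropy at fixed covariance) while the debiasing term $-\tfrac12\OT_\sigma(\beta,\beta)$ behaves favorably; alternatively, one invokes the convexity of $S_\sigma$ on $\Gcal$ together with the fact that the problem, once restricted to Gaussians, is a well-posed finite-dimensional convex problem, and that the first-order stationarity condition below has a Gaussian solution which must then be the global minimizer by convexity. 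Then the problem reduces to minimizing $\bB \mapsto \sum_k w_k\big(\mathfrak{B}^2_\sigma(\bA_k,\bB) - \tfrac12 \mathfrak{B}^2_\sigma(\bB,\bB)\big)$ over $\Scal^d_+$.

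Now I would write the stationarity condition. Using \Cref{prop:envelop_theorem}(ii), $\nabla_\bB \mathfrak{B}^2_\sigma(\bA_k, \bB) = \Id - \bA_k\rt\big((\bA_k\rt\bB\bA_k\rt + \tfrac{\sigma^4}{4}\Id)\rt + \tfrac{\sigma^2}{2}\Id\big)^{-1}\bA_k\rt$ — or, more conveniently, the equivalent ``$\bB$-side'' form $\Id - \bB\mrt(\bB\rt\bA_k\bB\rt + \tfrac{\sigma^4}{4}\Id)\rt\bB\mrt + \tfrac{\sigma^2}{2}\bB^{-1}$, obtained from the similarity identity $\bA\rt(\bA\rt\bB\bA\rt+\tfrac{\sigma^4}{4})\rt\bA\mrt = \bB\mrt(\bB\rt\bA\bB\rt+\tfrac{\sigma^4}{4})\rt\bB\mrt$ used in \Cref{prop:closed_form_matrix}. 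For the debiasing term one needs $\nabla_\bB \mathfrak{B}^2_\sigma(\bB,\bB)$, which by symmetry and the chain rule evaluates using $\bD_\sigma = (4\bB^2+\sigma^4\Id)\rt = 2(\bB^2+\tfrac{\sigma^4}{4}\Id)\rt$; a short computation (the self-transport coupling has $\bC_\sigma = \bB$, so $\mathfrak{B}^2_\sigma(\bB,\bB) = 2\tr\bB - \tr\bD_\sigma + \dots$, and the $\log\det$ terms are handled directly) should give $\nabla_\bB \mathfrak{B}^2_\sigma(\bB,\bB) = 2\big(\Id - (\bB^2+\tfrac{\sigma^4}{4}\Id)^{-\frac12}\bB\big) \cdot(\text{something})$, so that the $\tfrac12$ factor in the debiasing makes it cancel the ``$\Id$'' parts. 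Setting $\sum_k w_k \nabla_\bB\mathfrak{B}^2_\sigma(\bA_k,\bB) = \tfrac12\nabla_\bB\mathfrak{B}^2_\sigma(\bB,\bB)$, the $\Id$ terms cancel (since $\sum_k w_k = 1$) and after multiplying through by $\bB\rt$ on both sides one lands exactly on \eqref{eq:bar-fixed-point}: $\sum_k w_k(\bB\rt\bA_k\bB\rt+\tfrac{\sigma^4}{4}\Id)\rt = (\bB^2+\tfrac{\sigma^4}{4}\Id)\rt$.

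The main obstacle I anticipate is not the stationarity computation but the reduction step — rigorously showing the barycenter lies in the Gaussian submanifold rather than merely that a Gaussian is a critical point. Handling this cleanly requires either a quantitative ``Gaussianization decreases $S_\sigma$'' lemma (leaning on the entropy-maximization property of Gaussians at fixed second moment, and checking the debiasing term does not spoil it) or an appeal to convexity of $S_\sigma$ on $\Gcal$ to promote the Gaussian critical point to the global minimizer; I would also need to verify existence of a solution $\bB \in \Scal^d_+$ to \eqref{eq:bar-fixed-point} (a Brouwer/Banach fixed-point argument on the map $\bB \mapsto$ right-hand-side-inverse, as in \citet{agueh11}), and a brief coercivity argument that the infimum in \eqref{eq:def-bar} is attained. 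The remaining manipulations — the two gradient evaluations and the algebraic simplification to \eqref{eq:bar-fixed-point} — are routine given \Cref{prop:envelop_theorem} and the similarity identities already in play.
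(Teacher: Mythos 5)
Your final fixed-point equation and the mean-decoupling step are correct, and you are right that the key ingredients are (a) convexity and differentiability of $S_\sigma$ on $\Gcal$ from \citet{janati20}, (b) the quadratic/Gaussian structure of the optimal potentials, and (c) a Brouwer-type existence argument. But there is a genuine gap in how you propose to bridge from a stationary Gaussian to the global minimizer, and it is precisely the step you yourself flag as the ``main obstacle.''

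The issue is that you work with the finite-dimensional matrix gradient $\nabla_\bB \mathfrak{B}^2_\sigma$ from \Cref{prop:envelop_theorem}(ii). Setting $\sum_k w_k\nabla_\bB\mathfrak{B}^2_\sigma(\bA_k,\bB) = \tfrac12\nabla_\bB\mathfrak{B}^2_\sigma(\bB,\bB)$ only certifies that $\Ncal(\bb,\bB)$ is a critical point of the restriction of the objective to the Gaussian submanifold. This is necessary but not sufficient for optimality over all of $\Gcal$; convexity of $S_\sigma$ on $\Gcal$ does not by itself promote a critical point of a restricted problem to a global one, and neither of your two proposed remedies is worked out (the Gaussianization lemma) or correct as stated (the appeal to convexity). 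The paper avoids this entirely by working directly with the measure-theoretic gradient on $\Gcal$, which is a \emph{function} on $\RR^d$: $\nabla_\beta S_\sigma(\alpha_k,\beta) = g_k - h_\beta$ where $g_k$ is the second Sinkhorn potential for $\OT_\sigma(\alpha_k,\beta)$ and $h_\beta$ is the autocorrelation potential for $\OT_\sigma(\beta,\beta)$. Positing $\beta = \Ncal(0,\bB)$, these potentials are quadratic forms with matrix parameters $\bF_k^{-1}$ and $\bH^{-1}$ obtained from the balanced fixed-point equations \eqref{eq:optim-fg}, and one computes
\begin{align*}
\sum_k w_k \bF_k^{-1} - \bH^{-1}
\;\propto\; \bB\mrt\Big(\sum_k w_k (\bB\rt\bA_k\bB\rt + \tfrac{\sigma^4}{4}\Id)\rt - (\bB^2 + \tfrac{\sigma^4}{4}\Id)\rt\Big)\bB\mrt ,
\end{align*}
which vanishes exactly when $\bB$ satisfies \eqref{eq:bar-fixed-point}. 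The decisive observation is then that the gradient $\sum_k w_k g_k - h_\beta$ has become an additive \emph{constant}, so for any direction $\mu\in\Gcal$ the first-order term $\langle \sum_k w_k g_k - h_\beta,\,\mu-\beta\rangle = (\text{const})\int(\dd\mu - \dd\beta) = 0$ because both measures integrate to 1. This verifies the first-order optimality condition in the full infinite-dimensional convex problem, not merely within the Gaussian family, and convexity of $S_\sigma$ on $\Gcal$ then gives global optimality with no Gaussianization lemma needed.

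So the conceptual difference is: you try to prove the minimizer is Gaussian and then compute a finite-dimensional stationarity condition; the paper never proves the minimizer must a priori be Gaussian, it simply exhibits a Gaussian that satisfies the full first-order condition and invokes convexity. The latter is both cleaner and rigorous. Your Brouwer argument for existence of $\bB$ matches the paper. One additional subtlety you should verify if you pursue your route: the derivative formula in \Cref{prop:envelop_theorem}(ii) holds $\sigma>0$ and assumes the argument is in $\Scal^d_{++}$, whereas you write the minimization over $\Scal^d_+$; the paper sidesteps this by only needing to check the candidate is a valid probability measure and then verifying the variational inequality, not by differentiating a function defined only on the interior.
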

\section{Entropy Regularized OT between Unbalanced Gaussians}
We proceed by considering a more general setting, in which measures $\alpha, \beta \in \Mcal^+_2(\RR^d)$ have finite integration masses $m_\alpha = \alpha(\RR^d)$ and $m_\beta=\beta(\RR^d)$ that are not necessarily the same. Following \citep{chizat17}, we define entropy-regularized unbalanced OT as:
\begin{equation}
\label{eq:unbalanced_ot}
    \UOT_\sigma(\alpha, \beta) \defeq \hspace{-.5em} \inf_{\pi \in \Mcal^+_2} \int_{\RR^{d}\times \RR^d}\hspace{-2.em} \| x - y \|^2 \dd\pi(x,y) + 2\sigma^2 \KL(\pi \Vert \alpha\otimes\beta)
    + \gamma \KL(\pi_1\Vert \alpha) + \gamma \KL(\pi_2\Vert \beta),
\end{equation}
where $\gamma > 0$ and $\pi_1$, $\pi_2$ are the marginal distributions of the coupling $\pi \in \Mcal_2^+(\RR^2\times\RR^d)$.
\paragraph{Duality and optimality conditions.}
By definition of the $\KL$ divergence, the term $\KL(\pi \Vert \alpha\otimes\beta)$ in \eqref{eq:unbalanced_ot} is finite if and only if $\pi$ admits a density with respect to $\alpha\otimes\beta$. Therefore \eqref{eq:unbalanced_ot} can be formulated as a variational problem:
\begin{equation}\label{eq:unbalanced_ot_variational}
    \begin{split}
    \UOT_\sigma(\alpha, \beta) \defeq  \inf_{r \in \Lcal_1(\alpha\otimes\beta)}\Big \{& \int_{\RR^{d}\times \RR^d} \| x - y \|^2 r(x, y)\dd\alpha(x)\dd\beta(y)\\ 
    & + 2\sigma^2 \KL(r \Vert \alpha\otimes\beta)
    + \gamma \KL(r_1\Vert \alpha) + \gamma \KL(r_2\Vert \beta)\Big\},
    \end{split}
\end{equation}
where $r_1 \defeq \int_{\RR^d} r(.,y)\dd\beta(y)$ and $r_2 \defeq \int_{\RR^d} r(x,.)\dd\alpha(x)$ correspond to the marginal density functions and the Kullback-Leibler divergence is defined  as: $\KL(f\Vert\mu) = \int_{\RR^d}(f\log(f) + f -1)\dd\mu$.
As in~\citep{chizat17}, Fenchel-Rockafellar duality holds and \eqref{eq:unbalanced_ot_variational} admits the following dual problem:
\begin{equation}
\label{eq:unbalanced_ot_dual}
    \begin{split}
    \UOT_\sigma(\alpha, \beta) = \sup_{\substack{f \in \Lcal_\infty(\alpha)\\g\in\Lcal_\infty(\beta)}}\Big\{&
    \gamma\int_{\RR^d}(1 - e^{-\frac{f}{\gamma}})\dd\alpha  + \gamma\int_{\RR^d}(1 -e^{-\frac{g}{\gamma}})\dd\beta \\ &-2\sigma^2\int_{\RR^d\times\RR^d}(e^{\frac{-\|x-y\|^2 + f(x) + g(y)}{2\sigma^2}}-1)\dd\alpha(x)\dd\beta(y)\Big\},
    \end{split}
\end{equation}
for which the necessary optimality conditions read, with  $\tau \defeq \frac{\gamma}{\gamma + 2\sigma^2}$:
\begin{align}
    \label{eq:opt-conditions-unbalanced}
    \frac{f(x)}{2\sigma^2} \overset{a.s}{=} -\tau\log\int_{\RR^d} e^{\frac{g(y) - \|x - y \|^2}{2\sigma^2}}\dd\beta(y), \hspace{1em}
    \frac{g(x)}{2\sigma^2} \overset{a.s}{=} -\tau\log\int_{\RR^d} e^{\frac{f(y) - \|x - y \|^2}{2\sigma^2}}\dd\alpha(y).
\end{align}
Moreover, if such a pair of dual potentials exists, then the optimal transportation plan is given by
\begin{equation}
    \label{eq:transport-plan-unbalanced}
    \frac{\dd \pi}{\dd\alpha \otimes \dd\beta}(x, y) = e^{\frac{f(x) + g(y) - \|x - y \|^2}{2\sigma^2}}.
\end{equation}
The following proposition provides a simple formula to compute $\UOT_\sigma$ at optimality. It shows that it is sufficient to know the total transported mass $\pi(\RR^d \times \RR^d)$.
\begin{proposition}
Assume there exists an optimal transportation plan $\pi^*$, solution of \eqref{eq:unbalanced_ot}. Then
\label{prop:unbalanced-loss-opt}
\begin{equation}
    \label{eq:unbalanced-loss-opt}
    \UOT_\sigma(\alpha, \beta) = \gamma (m_\alpha + m_\beta) + 2\sigma^2m_\alpha m_\beta - 2(\sigma^2 + \gamma)\pi^*(\RR^d \times \RR^d).
\end{equation}
\end{proposition}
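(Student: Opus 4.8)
The plan is to evaluate the dual objective \eqref{eq:unbalanced_ot_dual} at an optimal pair of potentials and use the optimality conditions \eqref{eq:opt-conditions-unbalanced} to rewrite each integrand in terms of the density $r = \dd\pi^*/\dd(\alpha\otimes\beta)$ of the optimal plan. Since Fenchel--Rockafellar duality holds with no gap, $\UOT_\sigma(\alpha,\beta)$ equals the value of \eqref{eq:unbalanced_ot_dual} at an optimal $(f,g)$, and by \eqref{eq:transport-plan-unbalanced} this pair satisfies $e^{(f(x)+g(y)-\|x-y\|^2)/(2\sigma^2)} = r(x,y)$ ($\alpha\otimes\beta$-a.s.).

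First I would compute the marginal densities of $\pi^*$. Writing $r_1(x) = \int_{\RR^d} r(x,y)\dd\beta(y) = e^{f(x)/(2\sigma^2)} \int_{\RR^d} e^{(g(y) - \|x-y\|^2)/(2\sigma^2)}\dd\beta(y)$ and substituting the first identity of \eqref{eq:opt-conditions-unbalanced}, the inner integral equals $e^{-f(x)/(2\sigma^2\tau)}$. Because $\tau = \gamma/(\gamma + 2\sigma^2)$ gives $\frac{1}{2\sigma^2\tau} = \frac{1}{2\sigma^2} + \frac{1}{\gamma}$, this collapses to the key identity $r_1 = e^{-f/\gamma}$ $\alpha$-a.s., and symmetrically $r_2 = e^{-g/\gamma}$ $\beta$-a.s. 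Then I would substitute $e^{-f/\gamma} = r_1$, $e^{-g/\gamma} = r_2$ and $e^{(f(x)+g(y)-\|x-y\|^2)/(2\sigma^2)} = r$ into \eqref{eq:unbalanced_ot_dual}, which yields
\[
\UOT_\sigma(\alpha,\beta) = \gamma \int_{\RR^d}(1 - r_1)\dd\alpha + \gamma\int_{\RR^d}(1 - r_2)\dd\beta - 2\sigma^2 \int_{\RR^d\times\RR^d}(r - 1)\dd\alpha\dd\beta .
\]
Finally I would use that $\int r_1\dd\alpha = \int r_2\dd\beta = \int r\,\dd\alpha\dd\beta = \pi^*(\RR^d\times\RR^d)$ — all three equal the total transported mass, since both marginals of $\pi^*$ share its mass — together with $\int 1\,\dd\alpha = m_\alpha$, $\int 1\,\dd\beta = m_\beta$, $\int 1\,\dd\alpha\dd\beta = m_\alpha m_\beta$, and then collect terms to reach \eqref{eq:unbalanced-loss-opt}.

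\textbf{Main obstacle.} The only real subtlety is the algebraic bookkeeping that turns the optimality conditions into $r_1 = e^{-f/\gamma}$: one has to track the constant $\tau$ carefully so that the $\sigma^2$-entropic scaling and the $\gamma$-marginal scaling combine to exactly $1/\gamma$. A route that avoids invoking strong duality is to substitute the plan density directly into the primal functional \eqref{eq:unbalanced_ot_variational}: using $r_1 = e^{-f/\gamma}$ and $r_2 = e^{-g/\gamma}$, one finds that $2\sigma^2\KL(r\|\alpha\otimes\beta)$ contributes $\int f r_1\dd\alpha + \int g r_2\dd\beta - \int\|x-y\|^2 r\,\dd\alpha\dd\beta$ plus pure mass terms, while $\gamma\KL(r_1\|\alpha)$ and $\gamma\KL(r_2\|\beta)$ contribute $-\int f r_1\dd\alpha$ and $-\int g r_2\dd\beta$ plus pure mass terms; the transport cost and all potential-dependent integrals then cancel, leaving exactly the stated combination of $m_\alpha$, $m_\beta$ and $\pi^*(\RR^d\times\RR^d)$.
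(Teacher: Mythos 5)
Your first route is essentially identical to the paper's proof: the paper applies Fubini to $\pi^*(\RR^d\times\RR^d) = \int e^{(f+g-\|\cdot\|^2)/(2\sigma^2)}\,\dd\alpha\dd\beta$, uses the optimality condition to collapse the inner integral, tracks the same $\tau$-bookkeeping ($\tfrac{1}{2\sigma^2}(1-\tfrac{1}{\tau}) = -\tfrac{1}{\gamma}$) to get $\int e^{-g/\gamma}\dd\beta = \pi^*(\RR^d\times\RR^d)$ and, symmetrically, $\int e^{-f/\gamma}\dd\alpha = \pi^*(\RR^d\times\RR^d)$, and then plugs into the dual. Phrasing this through the marginal densities $r_1 = e^{-f/\gamma}$, $r_2 = e^{-g/\gamma}$ is a cosmetic repackaging of the same argument.

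The primal substitution you sketch in the final paragraph is a genuine alternative the paper does not take, and it is a useful sanity check since it avoids invoking no-duality-gap directly. One caution if you carry it out in full: you must use the standard generalized Kullback--Leibler normalization $\KL(h\Vert\mu) = \int(h\log h - h + 1)\dd\mu$, not $\int(h\log h + h - 1)\dd\mu$ as written near \eqref{eq:unbalanced_ot_variational}. With the sign as printed there, the pure-mass terms come out as the \emph{negative} of \eqref{eq:unbalanced-loss-opt}; with the standard convention they reproduce it exactly, matching the dual evaluation. So the paper's displayed $\KL$ formula contains a sign typo, and your primal route implicitly relies on the correct convention.
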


\paragraph{Unbalanced OT for scaled Gaussians.}
Let $\alpha$ and $\beta$ be unbalanced Gaussian measures. Formally, $\alpha =  m_\alpha \Ncal(\ba, \bA)$ and $\beta = m_\beta  \Ncal(\bb, \bB)$ with $m_\alpha, m_\beta > 0$. Unlike balanced OT, $\alpha$ and $\beta$ cannot be assumed to be centered without loss of generality. However, we can still derive a closed form formula for $\UOT_{\sigma}(\alpha, \beta)$ by considering quadratic potentials of the form
\begin{align}
    \label{eq:potentials-as-quads}
    \frac{f(\bx)}{2\sigma^2} = -\frac{1}{2}(x^\top \bU \bx - 2x^\top\bu) + \log(m_u), \quad
    \frac{g(x)}{2\sigma^2} = -\frac{1}{2}(x^\top \bV\bx - 2x^\top\bv)  + \log(m_v).
\end{align}
Let $\sigma$ and $\gamma$ be the regularization parameters as in \Cref{eq:unbalanced_ot_variational}, and $\tau \defeq \frac{\gamma}{2\sigma^2 + \gamma}$, $\lambda \defeq \frac{\sigma^2}{1-\tau} = \sigma^2 + \frac{\gamma}{2}$. Let us define the following useful quantities:
\begin{align}
    \mu &= \begin{pmatrix}
            \ba + \bA\bX^{-1}(\bb -\ba) \\
            \bb + \bB\bX^{-1}(\ba -\bb)
            \end{pmatrix}\\
    \bH &=     \begin{pmatrix}
     (\Id + \frac{1}{\lambda}\bC)(\bA-\bA\bX^{-1}\bA) &\bC +  (\Id + \frac{1}{\lambda}\bC)\bA\bX^{-1}\bB  \\ 
     \bC^\top + (\Id + \frac{1}{\lambda}\bC^\top)\bB\bX^{-1}\bA   & (\Id +  \frac{1}{\lambda}\bC^\top)(\bB-\bB\bX^{-1}\bB)
     \end{pmatrix}\\
    m_\pi &= \sigma^{\frac{d\sigma^2}{\gamma + \sigma^2}} \left(m_\alpha m_\beta \det(\bC)\sqrt{\frac{\det(\widetilde{\bA}\widetilde{\bB})^\tau}{\det( \bA\bB)}}\right)^{\frac{1}{\tau + 1}}
    \frac{e^{-\frac{\|\ba - \bb\|_{\bX^{-1}}^2}{2(\tau + 1)}}}{\sqrt{\det(\bC - \frac{2}{\gamma}\widetilde{\bA}\widetilde{\bB})}}, 
\end{align}
with
\begin{align*}
    \bX &= \bA + \bB + \lambda\Id, &\widetilde{\bA} = \frac{\gamma}{2}(\Id - \lambda(\bA + \lambda\Id)^{-1}), \\
    \widetilde{\bB} &= \frac{\gamma}{2}(\Id - \lambda(\bB + \lambda\Id)^{-1}), &\bC = \left(\frac{1}{\tau}\widetilde{\bA}\widetilde{\bB} + \frac{\sigma^4}{4}\Id\right)^{\frac{1}{2}} - \frac{\sigma^2}{2}\Id.
\end{align*}
\begin{theorem}
\label{thm:unbalanced}
Let $\alpha = m_\alpha\Ncal(\ba, \bA)$ and $\beta = m_\beta \Ncal(\bb, \bB)$ be two unbalanced Gaussian measures. Let $\tau = \frac{\gamma}{2\sigma^2 + \gamma}$ and $\lambda \defeq \frac{\sigma^2}{1-\tau} = \sigma^2 + \frac{\gamma}{2}$ and $\mu$, $\bH$, and $m_\pi$ be as above. Then
\begin{enumerate}[(i)]
\item The unbalanced optimal transport plan, minimizer of \eqref{eq:unbalanced_ot}, is also an unbalanced Gaussian over $\RR^d \times \RR^d$ given by $
    \pi = m_\pi  \Ncal\left(\mu,
             \bH
            \right)$,
\item $\UOT_\sigma$ can be obtained in closed form using \Cref{prop:unbalanced-loss-opt} with $\pi(\RR^d \times \RR^d) = m_\pi$.
\end{enumerate}
%
\end{theorem}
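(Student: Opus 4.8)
The plan is to mirror the strategy used for the balanced case (Propositions~\ref{prop:sinkhorn-transform}--\ref{prop:closed_form_matrix} and Lemma~\ref{lemma:optim-loss}), but now carrying along the extra degrees of freedom: a linear term (because the measures can no longer be centered) and a multiplicative mass term (because the potentials in~\eqref{eq:potentials-as-quads} carry the extra constants $\log m_u$, $\log m_v$). First I would verify the ansatz: plug the quadratic-plus-linear potentials~\eqref{eq:potentials-as-quads} into the unbalanced optimality conditions~\eqref{eq:opt-conditions-unbalanced}. Each condition is a Gaussian integral against $\beta$ (resp.\ $\alpha$), which can be evaluated in closed form by completing the square; the logarithm of the result is again a quadratic-plus-linear form, so the fixed-point equations close on the finite-dimensional data $(\bU,\bu,m_u)$ and $(\bV,\bv,m_v)$. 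The presence of the exponent $\tau$ on the right-hand side of~\eqref{eq:opt-conditions-unbalanced} is what forces the scaling $\widetilde{\bA}, \widetilde{\bB}$ (the "$\frac{\gamma}{2}(\Id - \lambda(\bA+\lambda\Id)^{-1})$" terms encode the effect of the $\tau$-contracted self-convolution of a Gaussian) and the appearance of $\lambda = \sigma^2/(1-\tau)$ throughout.

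Next I would reduce the matrix fixed-point equations to a single quadratic matrix equation. Introducing $\bC$ as the analogue of $\bA\bG^{-1}$ in the balanced proof, eliminating one potential in favor of the other should yield an equation of the form $\bC^2 + \sigma^2\bC - \frac{1}{\tau}\widetilde{\bA}\widetilde{\bB} = 0$, whose unique solution with positive spectrum is given by Proposition~\ref{prop:closed_form_matrix} applied with $\bA\bB$ replaced by $\frac{1}{\tau}\widetilde{\bA}\widetilde{\bB}$ — this is exactly the stated $\bC = (\frac{1}{\tau}\widetilde{\bA}\widetilde{\bB} + \frac{\sigma^4}{4}\Id)^{1/2} - \frac{\sigma^2}{2}\Id$. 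The linear parts decouple from the quadratic parts once $\bC$ (equivalently $\bX = \bA + \bB + \lambda\Id$) is known: solving the affine system for $\bu,\bv$ produces the mean vector $\mu$, and the residual scalar equations for $m_u, m_v$ (which involve the normalizing constants of all the Gaussian integrals, hence the $\det$ factors) produce $m_\pi$. Then~\eqref{eq:transport-plan-unbalanced} gives $\pi$ explicitly: it is $e^{(f\oplus g - \|\cdot\|^2)/2\sigma^2}$ times the density of $\alpha\otimes\beta$, i.e.\ an unnormalized Gaussian whose covariance block structure, after another round of completing the square in the joint variable $(x,y)$, is $\bH$ and whose total mass is $m_\pi$; this establishes~(i). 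Part~(ii) is then immediate from Proposition~\ref{prop:unbalanced-loss-opt} with $\pi^*(\RR^d\times\RR^d) = m_\pi$, provided we have justified that the quadratic ansatz indeed yields \emph{the} optimizer — here I would invoke strict convexity of~\eqref{eq:unbalanced_ot_variational} (the $\KL$ terms are strictly convex) together with the sufficiency of the optimality conditions~\eqref{eq:opt-conditions-unbalanced} for this regularized problem, analogous to the balanced argument, so that exhibiting one solution of the fixed-point system suffices.

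The main obstacle I expect is purely computational bookkeeping rather than conceptual: the unbalanced optimality map is a composition of a Gaussian convolution, an exponentiation by $\tau$, and a logarithm, and tracking how the quadratic form, the linear shift, and the scalar constant each transform — while keeping the $\sigma$, $\gamma$, $\lambda$, $\tau$ dependencies straight — is delicate, especially in isolating the clean closed forms for $\mu$, $\bH$, and $m_\pi$. In particular, showing that the off-diagonal block of $\bH$ simplifies to $\bC + (\Id + \frac{1}{\lambda}\bC)\bA\bX^{-1}\bB$ and that $m_\pi$ collapses to the stated product of determinants and the Mahalanobis exponential $e^{-\|\ba-\bb\|_{\bX^{-1}}^2/2(\tau+1)}$ will require repeated use of the Woodbury identity and the matrix determinant lemma; I would relegate these manipulations to the appendix. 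A secondary subtlety is verifying well-posedness of each Gaussian integral along the way (the analogue of the "$\bX' \succ 0$" condition in Proposition~\ref{prop:sinkhorn-transform}), which should follow because $\bA + \lambda\Id$, $\bB + \lambda\Id$, and $\bX = \bA + \bB + \lambda\Id$ are all positive definite whenever $\lambda = \sigma^2 + \gamma/2 > 0$.
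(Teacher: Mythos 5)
Your roadmap matches the paper's proof essentially step for step: Proposition~\ref{prop:sinkhorn-transform-unbalanced} implements your quadratic-plus-linear ansatz, the paper reduces the $(\bF,\bG)$ fixed point to the balanced system for the scaled pair $(\tfrac{1}{\tau}\widetilde{\bA},\widetilde{\bB})$ and invokes Proposition~\ref{prop:closed_form_matrix} exactly as you propose (yielding $\bC^2+\sigma^2\bC-\tfrac{1}{\tau}\widetilde{\bA}\widetilde{\bB}=0$), then solves the affine system for $\bu,\bv$, isolates $m_u m_v$, and completes the square in $(x,y)$ to read off $\mu$, $\bH$, and $m_\pi$ before applying Proposition~\ref{prop:unbalanced-loss-opt}. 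The one place you are marginally more careful than the paper is in flagging the need to justify that satisfying the first-order conditions \eqref{eq:opt-conditions-unbalanced} actually yields the minimizer (the paper states these only as necessary and then assumes existence implies optimality); your proposed fix via strict convexity of the $\KL$ terms is the right one, and note also that the paper must verify $\Gamma\succ0$ via a Schur-complement bound $\bG^{-1}\prec\tau\lambda\widetilde{\bA}^{-1}$, which is a bit more delicate than the positivity of $\bA+\lambda\Id$, $\bB+\lambda\Id$, $\bX$ alone.
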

%
%
%
\begin{remark}
The exponential term in the closed form formula above provides some intuition on how transportation occurs in unbalanced OT. When the difference between the means is too large, the transported mass $m_\pi^\star$ goes to $0$ and thus no transport occurs. However for fixed means $\ba, \bb$, when $\gamma \to +\infty$, $\bX^{-1} \to 0$ and the exponential term approaches 1. 
\end{remark}

\section{Numerical Experiments}
\label{sec:experiments}
\paragraph{Empirical validation of the closed form formulas.}
\Cref{fig:convergence} illustrates the convergence towards the closed form formulas of both theorems. For each dimension $d$ in [5, 10], we select a pair of Gaussians $\alpha \sim \Ncal(\ba, \bA)$ and $\beta \sim m_\beta\Ncal(\bb, \bB)$ with $m_\beta$ equals 1 (balanced) or 2 (unbalanced) and randomly generated means $\ba, \bb$ (uniform in $[-1, 1]^d$) and covariances $\bA, \bB \in S^d_{++}$ following the Wishart distribution $W_d(0.2 * \Id, d)$. We generate i.i.d datasets $\alpha_n \sim \Ncal(\ba, \bA)$ and $\beta_n \sim m_\beta\Ncal(\bb, \bB)$ with $n$ samples and compute $\OT_{\sigma}$ / $\UOT_\sigma$. We report means and $\pm$ shaded standard-deviation areas over 20 independent trials for each value of $n$.
\begin{figure}[H]
    \centering
    \includegraphics[width=0.9\linewidth]{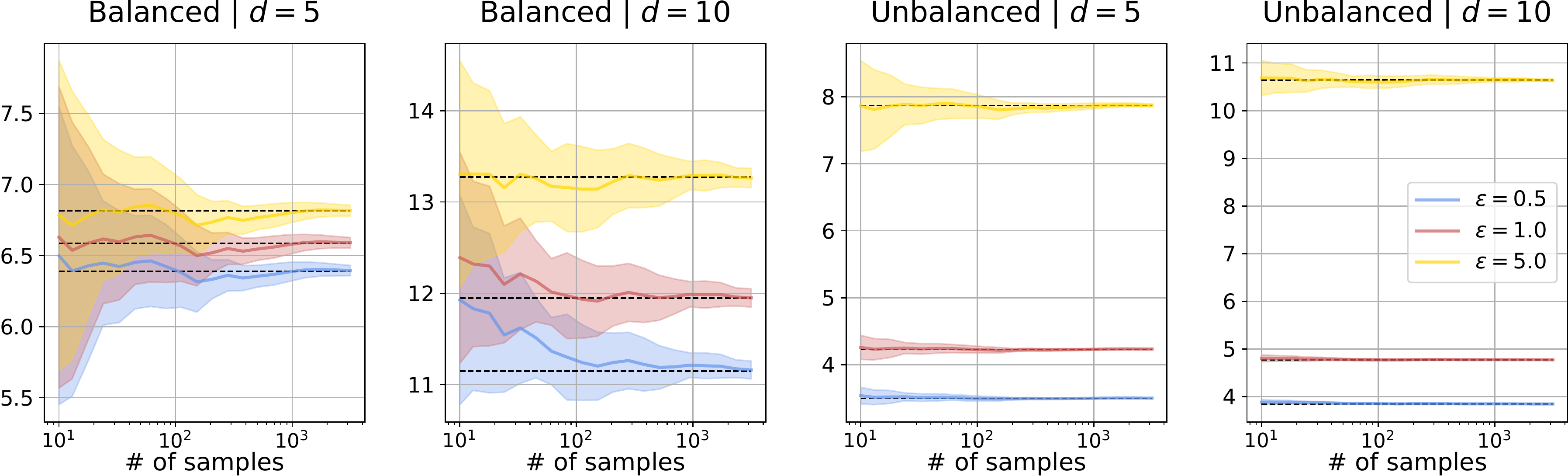}
    \caption{Numerical convergence the (n-samples) empirical estimation of $\OT(\alpha_n, \beta_n)$ computed using Sinkhorn's algorithm towards the closed form of $\OT_{\sigma}(\alpha, \beta)$ and $\UOT_\sigma(\alpha, \beta)$ (the theoretical limit is dashed) given by \Cref{thm:otclosedform} and \Cref{thm:unbalanced} for random Gaussians $\alpha, \beta$. For unbalanced OT, $\gamma = 1$.}
    \label{fig:convergence}
    \vskip-.5cm
\end{figure}
\paragraph{Transport plan visualization with $d=1$.} \Cref{fig:plans} confronts the expected theoretical plans (contours in black) given by \cref{thm:otclosedform,thm:unbalanced} to empirical ones (weights in shades of red) obtained with Sinkhorn's algorithm using 2000 Gaussian samples. The density functions (black) and the empirical histograms (red) of $\alpha$ (resp. $\beta$) with 200 bins are displayed on the left (resp. top) of each transport plan. The red weights are computed via a 2d histogram of the transport plan returned by Sinkhorn's algorithm with (200 x 200) bins. Notice the blurring effect of $\varepsilon$ and increased mass transportation of the Gaussian tails in unbalanced transport with larger $\gamma$.
\begin{figure}[H]
    \centering
    \includegraphics[width=0.24\linewidth]{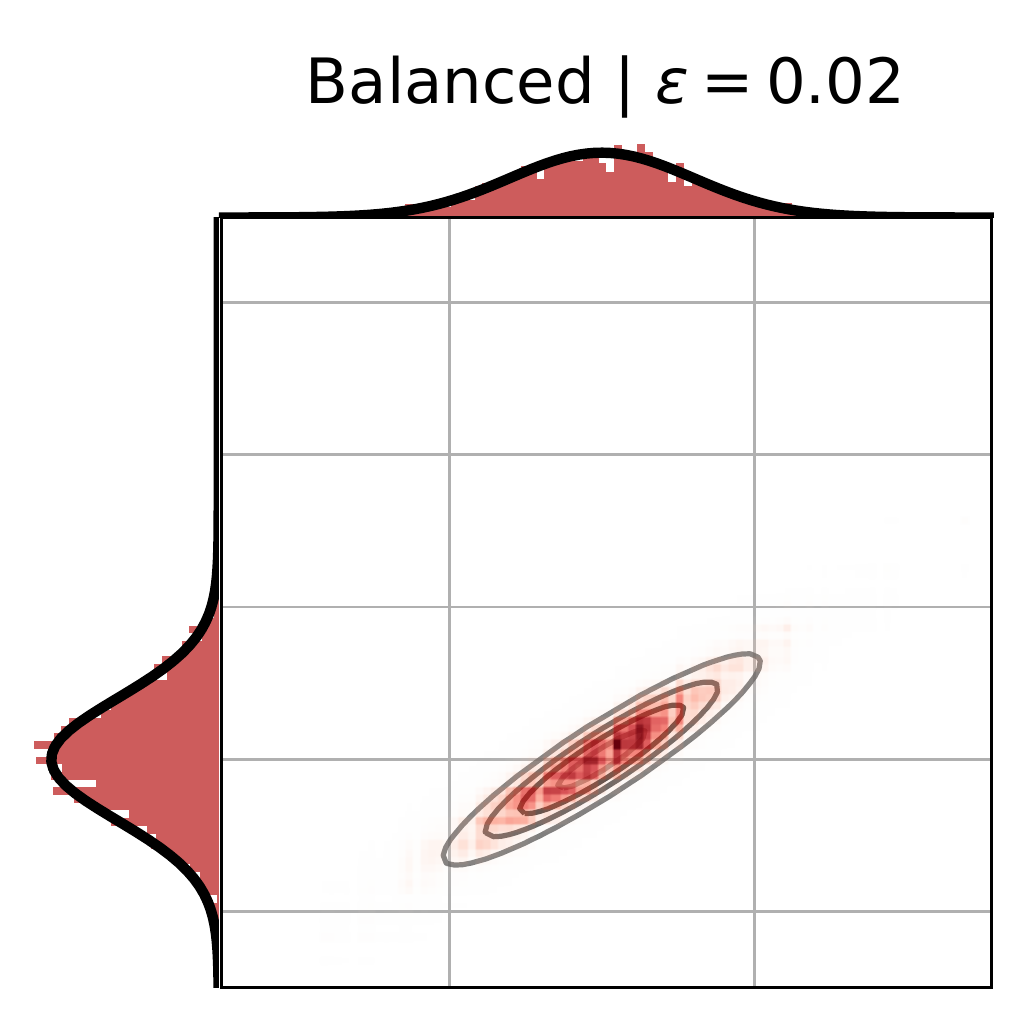}
    \includegraphics[width=0.24\linewidth]{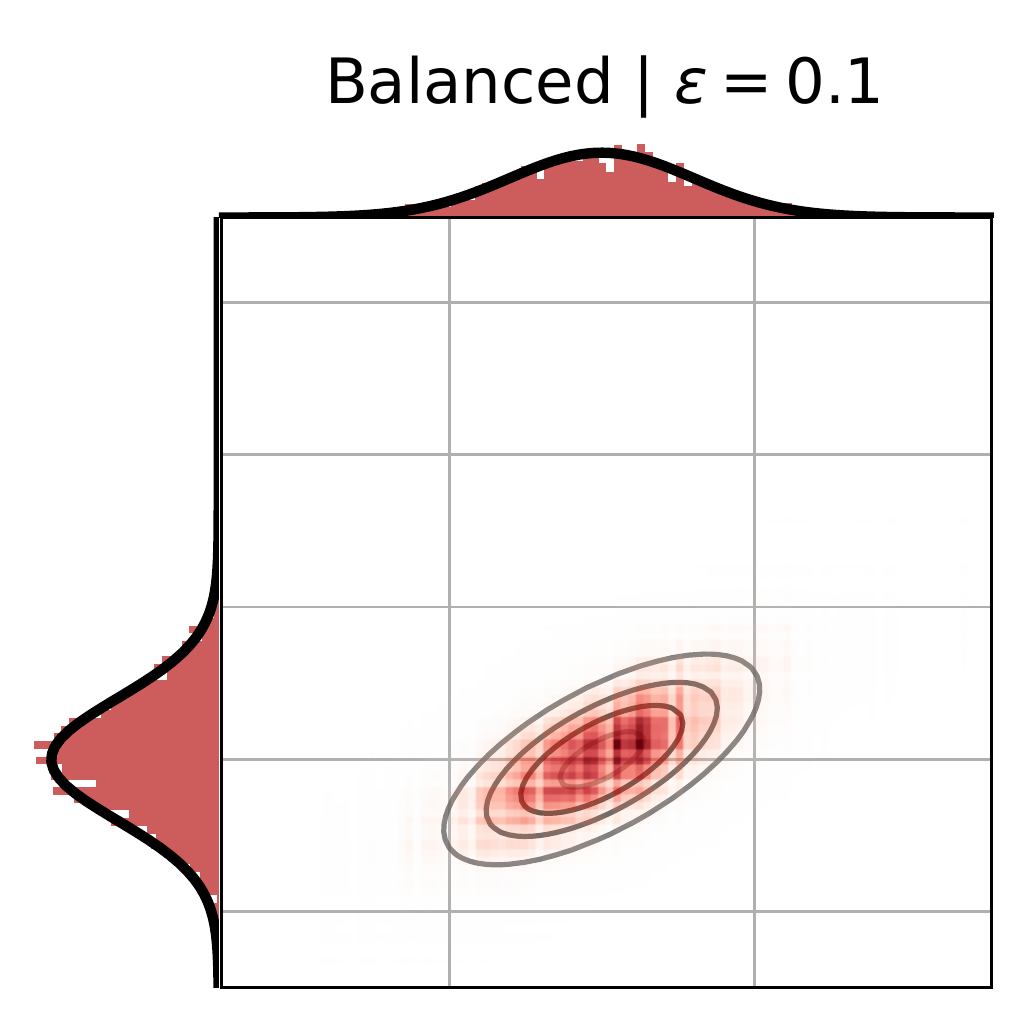}
   \includegraphics[width=0.24\linewidth]{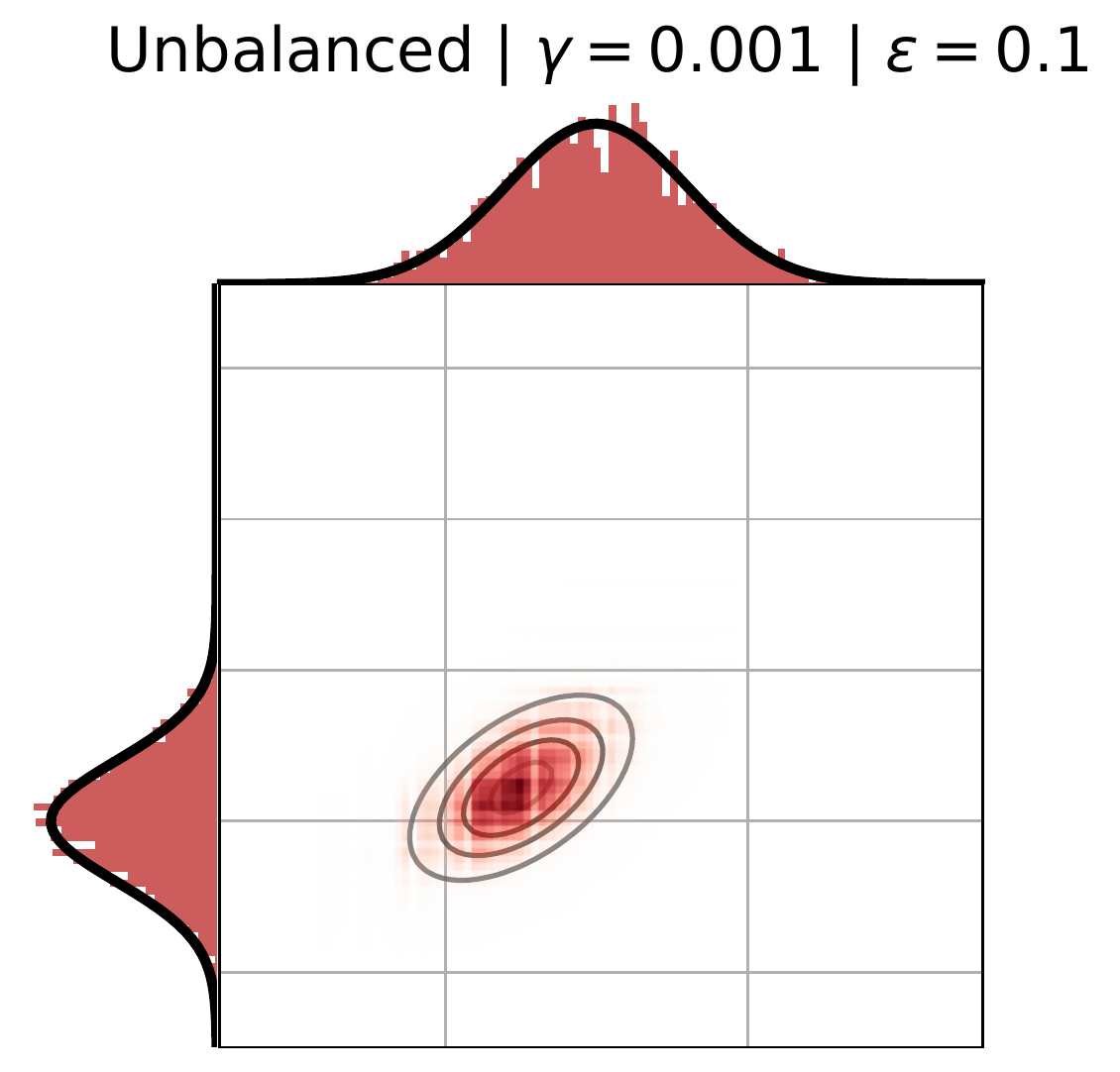}
    \includegraphics[width=0.24\linewidth]{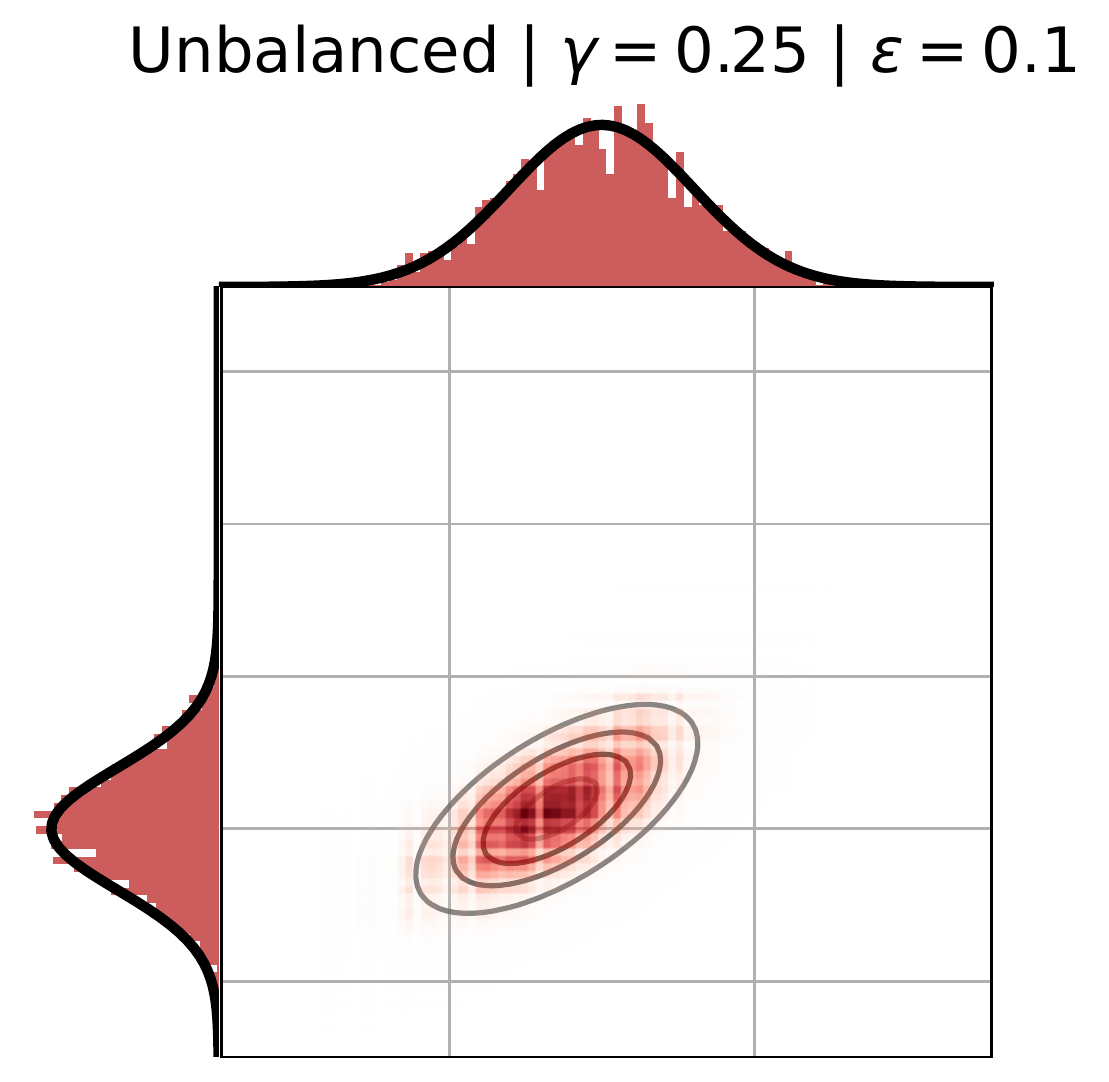}
    \caption{Effect of $\varepsilon$ in balanced OT and $\gamma$ in unbalanced OT. Empirical plans (red) correspond to the expected Gaussian contours depicted in black. Here $\alpha = \Ncal(0, 0.04)$ and $\beta = m_\beta\Ncal(0.5, 0.09)$ with $m_\beta = 1$ (balanced) and $m_\beta = 2$ (unbalanced). In unbalanced OT, the right tail of $\beta$ is not transported, and the mean of the transportation plan is shifted compared to that of the balanced case -- as expected from \Cref{thm:unbalanced} specially for low $\gamma$.
    \label{fig:plans}}
\end{figure}
\paragraph{Empirical estimation of the closed form mean and covariance of the unbalanced transport plan}
\Cref{fig:rebuttal} illustrates the convergence towards the closed form formulas of $\mu$ and $\bH$ of theorem \ref{thm:unbalanced}. For each dimension $d$ in [1, 2, 5, 10], we select a pair of Gaussians $\alpha \sim \Ncal(\ba, \bA)$ and $\beta \sim m_\beta\Ncal(\bb, \bB)$ with $m_\beta= 1.1$ and randomly generated means $\ba, \bb$ (uniform in $[-1, 1]^d$) and covariances $\bA, \bB \in S^d_{++}$ following the Wishart distribution $W_d(0.2 * \Id, d)$. We generate i.i.d datasets $\alpha_n \sim \Ncal(\ba, \bA)$ and $\beta_n \sim m_\beta\Ncal(\bb, \bB)$ with $n$ samples and compute $\OT_{\sigma}$ / $\UOT_\sigma$. We set $\varepsilon \defeq 2\sigma^2 - 0.5$ and $\gamma = 0.1$. Using the obtained empirical Sinkhorn transportation plan, we computed its empirical mean $\mu_n$ and covariance matrix $\Sigma_n$ and display their relative $\ell_{\infty}$ distance to $\mu$ and $\bH$ ($\Sigma$ in the figure) of theorem \ref{thm:unbalanced}. The means and $\pm$ sd intervals are computed over 50 independent trials for each value of $n$.
\begin{figure}[H]
    \centering
    \includegraphics[width=0.9\linewidth]{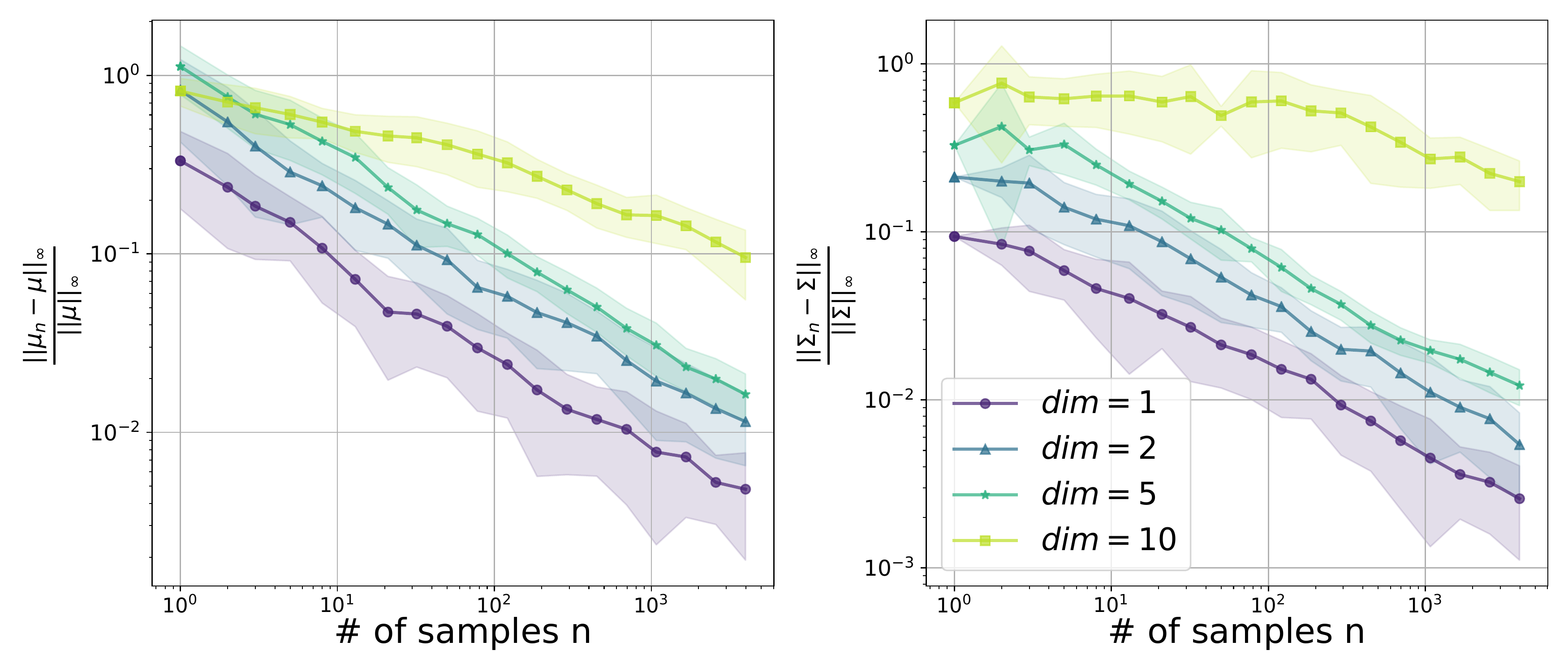}
    \caption{Numerical convergence the (n-samples) empirical estimation of the theoretical mean $\mu$ and covariance $\bH$ of theorem \ref{thm:unbalanced}. Empirical moments are computed computed using Sinkhorn's algorithm.
    \label{fig:rebuttal}}
    \vskip-.5cm
\end{figure}
%


\section*{Broader Impact}
We expect this work to benefit research on sample complexity issues in regularized optimal transport, such as \citep{genevay19sample} for balanced regularized OT, and future work on unbalanced regularized OT. By providing the first continuous test-case, we hope that researchers will be able to better test their theoretical bounds and benchmark their methods.

\section*{Acknowledgments}
 H. Janati, B. Muzellec and M. Cuturi were supported by a ``Chaire d’excellence de l’IDEX Paris Saclay''. H. Janati acknowledges the support of the ERC Starting Grant SLAB ERC-YStG-676943. The work of G. Peyr\'e was supported by the European Research Council (ERC project NORIA) and by the French government under management of Agence Nationale de la Recherche as part of the ``Investissements d’avenir'' program, reference ANR19-P3IA-0001 (PRAIRIE 3IA Institute).

\bibliographystyle{apalike}
\bibliography{references}

\iftoggle{supplementary}{
    \newpage
\section*{Appendix}\label{sec:appendix}

\subsection{The Newton-Schulz algorithm}


\begin{wrapfigure}{R}{0.4\textwidth}
\vskip-.7cm
\begin{minipage}{0.4\textwidth}
\begin{algorithm}[H]
\caption{NS Monge Iterations}\label{alg:NewtonSchulz}
\begin{algorithmic} 
\REQUIRE PSD matrix $\bA, \bB$, $\epsilon > 0$
\STATE $\bY \leftarrow \frac{\bB}{(1+\epsilon)\Vert\bB\Vert}, \bZ\leftarrow \frac{\bA}{(1+\epsilon)\Vert\bA\Vert}$
\WHILE{not converged}
\STATE $\bT \leftarrow (3 \eye - \bZ \bY)/2$
\STATE $\bY \leftarrow \bY\bT$
\STATE $\bZ \leftarrow \bT\bZ$
\ENDWHILE
\STATE $\bY \leftarrow \sqrt{\frac{\Vert\bB\Vert}{\Vert\bA\Vert}}\bY$, $\bZ \leftarrow \sqrt{\frac{\Vert\bA\Vert}{\Vert\bB\Vert}}\bZ$
\ENSURE $\bY = \bT^{\bA\bB}$, $\bZ =  \bT^{\bB\bA}$
\end{algorithmic}
\end{algorithm}
\end{minipage}
\vskip-.8cm
\end{wrapfigure}

The main bottleneck in computing $\bT^{\bA\bB}$ is that of computing matrix square roots. This can be performed using singular value decomposition (SVD) or, as suggested in \citep{muzellec2018elliptical}, using Newton-Schulz (NS) iterations~\citep[§5.3]{higham08functions}. In particular, Newton-Schulz iterations have the advantage of yielding both roots, and inverse roots. Hence, to compute $\bT^{\bA\bB}$, one would run NS a first time to obtain $\bA\rt$ and $\bA\mrt$, and a second time to get $(\bA\rt\bB\bA\rt)\rt$.

In fact, as a direct application of \citep[Theorem 5.2]{higham08functions}, one can even compute both $\bT^{\bA\bB}$ and $\bT^{\bB\bA} = \left(\bT^{\bA\bB}\right)^{-1}$ in a single run by initializing the Newton-Schulz algorithm with $\bA$ and $\bB$, as in \Cref{alg:NewtonSchulz}. Using \eqref{eq:bures-gradient}, and noting that $\Bures(\bA, \bB) = \tr\bA + \tr\bB - 2 \tr (\bT^{\bA\bB}\bA)$, this implies that a single run of NS is sufficient to compute $\Bures(\bA, \bB)$, $\nabla_\bA\Bures(\bA, \bB)$ and $\nabla_\bB\Bures(\bA, \bB)$ using basic matrix operations. The main advantage of Newton-Schultz over SVD is that it its efficient scalability on GPUs, as illustrated in \Cref{fig:newton-schulz}.

Newton-Schulz iterations are quadratically convergent under the condition $\|\Id - \left(\begin{smallmatrix} \bA & 0 \\ 0 & \bB\end{smallmatrix}\right)^2\| < 1$, as shown in \citep[Theorem 5.8]{higham08functions}. To meet this condition, it is sufficient to rescale $\bA$ and $\bB$ so that their norms equal $(1+\varepsilon)^{-1}$ for some $\varepsilon > 0$, as in the first step of \Cref{alg:NewtonSchulz} (which can be skipped if $\|\bA\| < 1 $ (resp. $\|\bB\| < 1$)). Finally, the output of the iterations are scaled back, using the homogeneity (resp. inverse homogoneity) of \cref{eq:monge_map} w.r.t. $\bA$ (resp. $\bB)$.

A rough theoretical analysis shows that both Newton-Schulz and SVD have a $O(d^3)$ complexity in the dimension. \Cref{fig:newton-schulz} compares the running times of Newton-Schulz iterations and SVD on CPU or GPU used to compute both $\bA\rt$ and $\bA\mrt$. We simulate a batch of positive definite matrices $\bA$ following the Wishart distribution $W(\Id_d, d)$ to which we add $0.1 \Id$ to avoid numerical issues when computing inverse square roots. We display the average run-time of 50 different trials along with its $\pm$ std interval. Notice the different magnitudes between CPUs and GPUs. As a termination criterion, we first run EVD to obtain $\bA\rt_{evd}$ and $\bA\mrt_{evd}$   and stop the Newton-Schultz algorithm when its n-th running estimate $\bA\rt_n$ verifies: $\|\bA\rt_n - \bA\rt_{evd}\|_1 \leq 10^{-4}$. Notice the different order of magnitude between CPUs and GPUs. Moreover, the computational advantage of Newton-Schultz on GPUs can be further increased when computing multiple square roots in parallel.
\begin{figure}[H]
    \centering
    \includegraphics[width=0.8\linewidth]{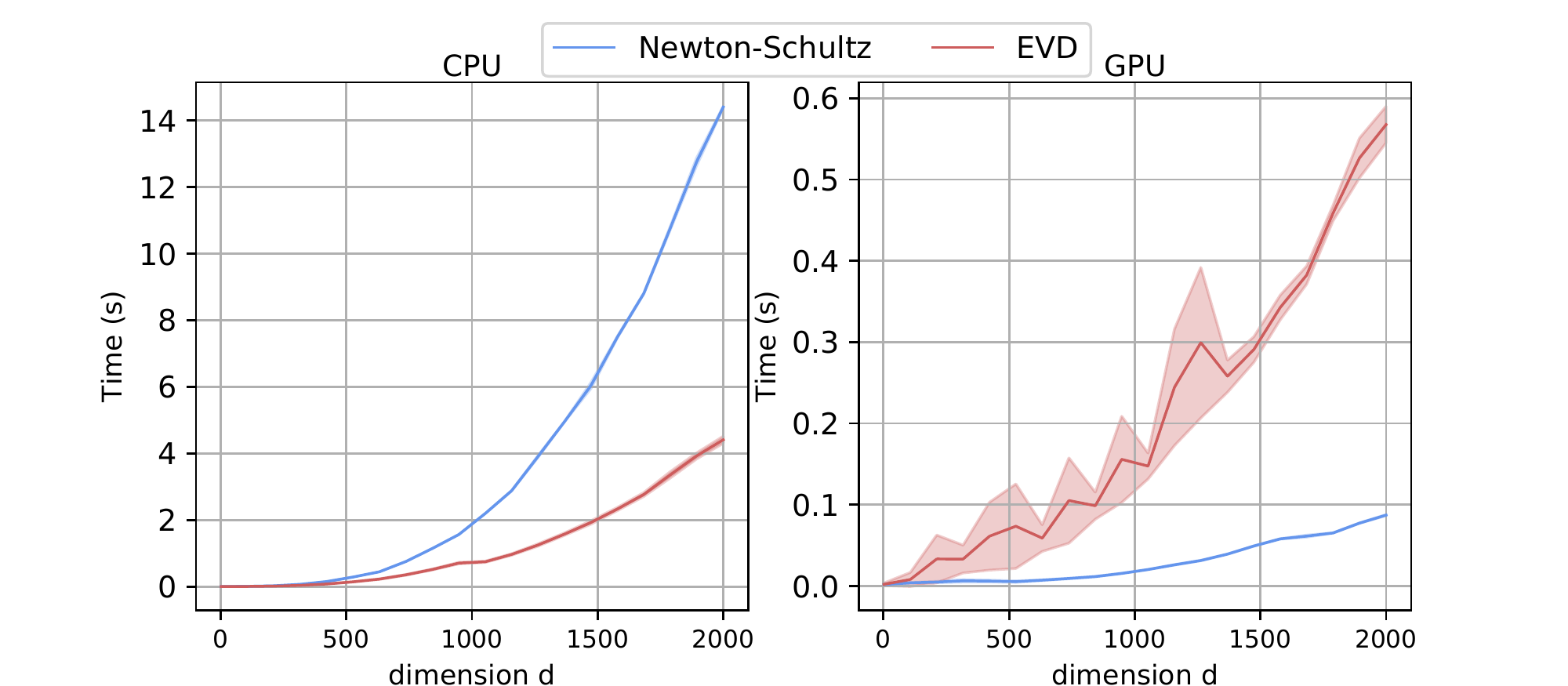}
    \caption{Average run-time of Newton-Schulz and EVD to compute on CPUs and GPUs.}
    \label{fig:newton-schulz}
\end{figure}
%
%
\subsection{Effects of regularization strength.} 
We provide numerical experiments to illustrate the behaviour of transportation plans and corresponding distances as $\sigma$ goes to $0$ or to infinity. As can be seen from \cref{eq:otclosedform}, when $\sigma \to 0$ we recover the Wasserstein-Bures distance \eqref{eq:wasserstein_bures}, and the optimal transportation plan converges to the Monge map \eqref{eq:monge_map}. When on the contrary $\sigma \to \infty$, Sinkhorn divergences $\mathfrak{S}_\varepsilon(\alpha, \beta) \defeq \OT_\varepsilon(\alpha, \beta) - \tfrac{1}{2}(\OT_\varepsilon(\alpha, \alpha) + \OT_\varepsilon(\beta, \beta))$ convergence to MMD with a $-c$ kernel (where $c$ is the optimal transport ground cost)~\citep{genevay18a}. With a $-\ell_2$ kernel, MMD is degenerate and equals $0$ for centered measures. 

\begin{figure}[t]
    \centering
    \includegraphics[width=\linewidth]{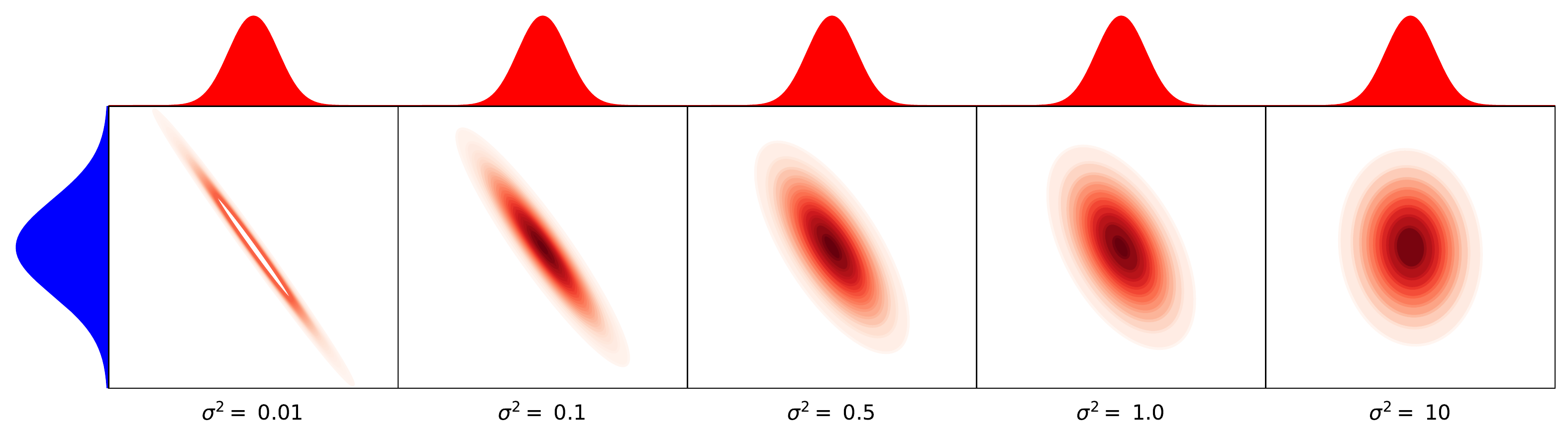}
    \vskip-.2cm
    \caption{Effect of regularization on transportation plans. When $\sigma$ goes to $0$ (left), the transportation plan concentrates on the graph of the linear Monge map. When $\sigma$ goes to infinity (right), the transportation plan converges to the independent coupling.}
    \label{fig:reg-effect}
\end{figure}

\begin{figure}[t]
    \centering
    \includegraphics[width=.5\linewidth]{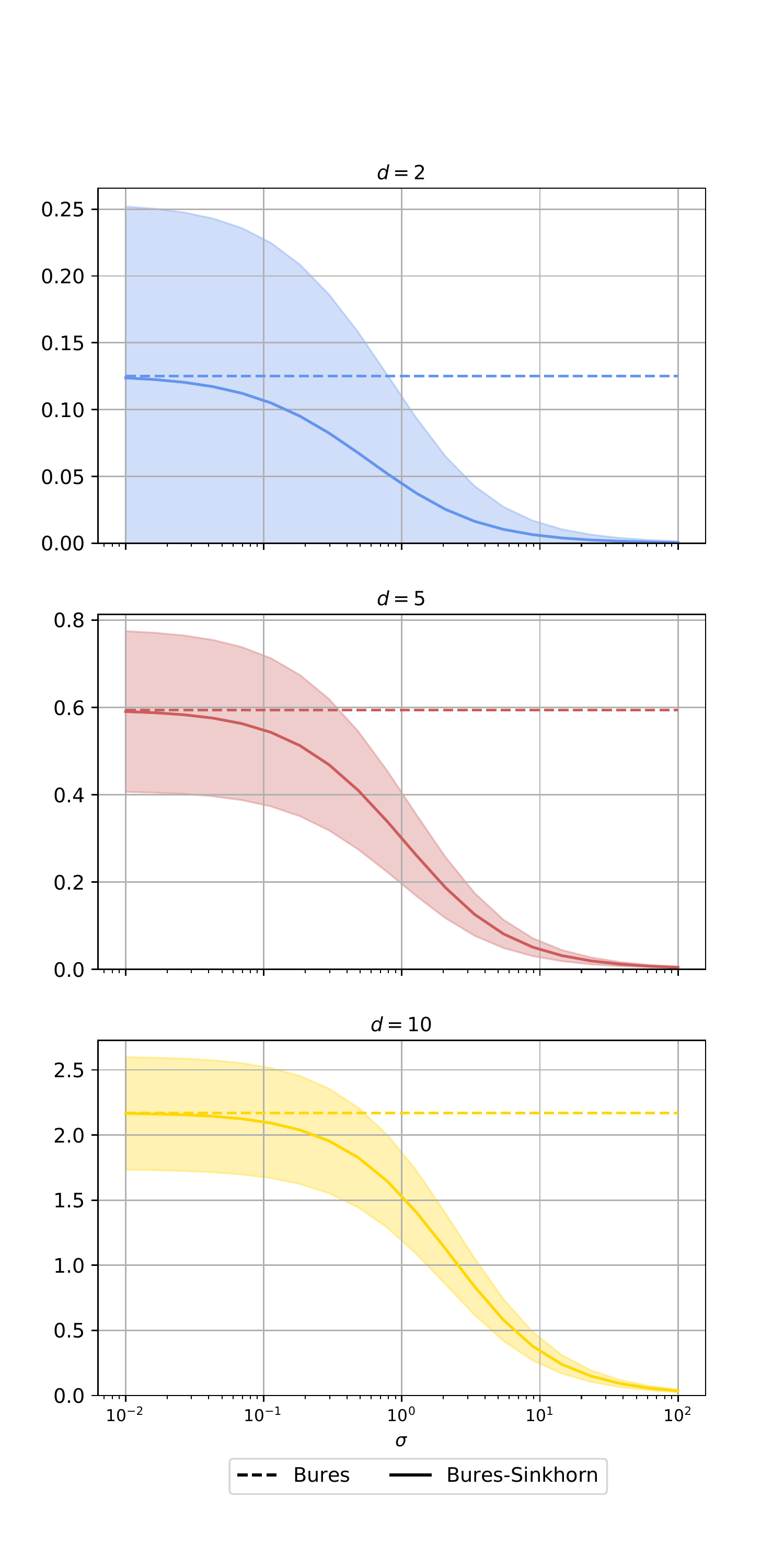}
    \caption{Numerical convergence of $\Bures_\sigma(\bA, \bB) - \tfrac{1}{2}(\Bures_\sigma(\bA, \bA) + \Bures_\sigma(\bB, \bB))$ to $\Bures(\bA, \bB)$ as $\sigma$ goes to $0$ and to $0$ as $\sigma$ goes to infinity.}
    \label{fig:reg-convergence}
\end{figure}

\begin{figure}[H]
    \centering
    \includegraphics[width=\linewidth]{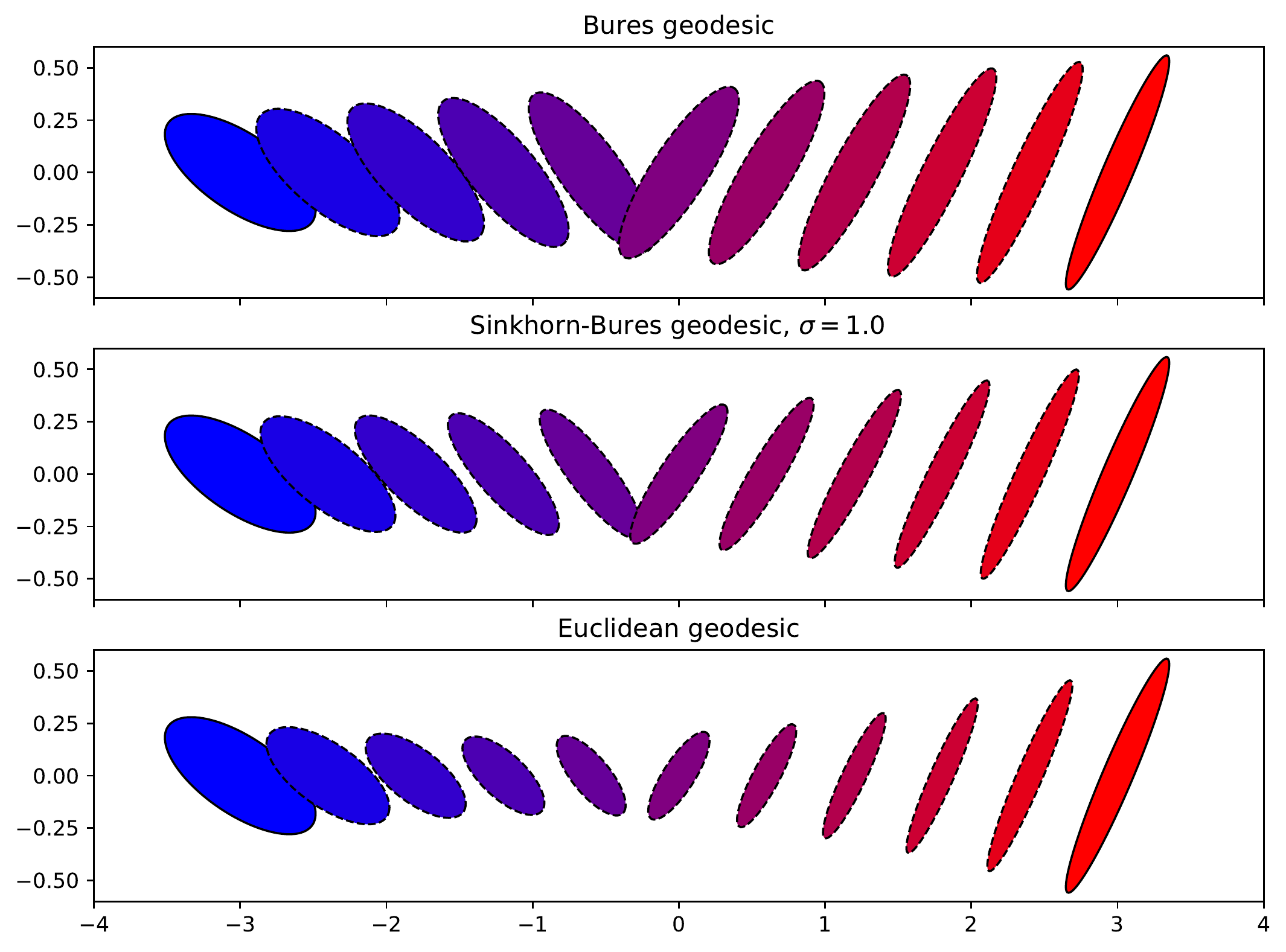}
    \caption{Bures, Sinkhorn-Bures, and Euclidean geodesics. Sinkhorn-Bures trajectories converge to Bures geodesics as $\sigma$ goes to $0$, and to Euclidean geodesics as $\sigma$ goes to infinity.}
    \label{fig:geodesics}
\end{figure}


\subsection{Proofs of technical results}

We provide in this appendix the proofs of the results in the paper, as well as some technical lemmas used in solving Sinkhorn's equations in closed form.


\paragraph{Proof of \Cref{lemma:centering}.}

\begin{proof}
 Let $\dd\bar{\alpha}(x) = \dd\alpha(x + \ba)$ (resp. $\dd\bar{\beta}(y) = \dd\beta(y + \bb)$, $\dd\bar{\pi}(x, y) = \dd\pi(x + \ba, y + \bb)$, such that $\bar{\alpha}, \bar{\beta}$ and $\bar{\pi}$ are centered. Then, $\forall \pi \in \Pi(\alpha, \beta)$,
\begin{enumerate}[(i)]
\item  $\bar{\pi}\in \Pi(\bar{\alpha}, \bar{\beta})$,
\item  $\KL(\pi \| \alpha\otimes\beta) = \KL(\bar{\pi} \| \bar{\alpha}\otimes\bar{\beta})$
\item $\int_{\RR^d\times\RR^d}  \| x - y \|^2 \dd\bar{\pi}(x, y) = \int_{\RR^d\times\RR^d}  \| (x - \ba) - (y - \bb) \|^2 \dd\pi(x, y) = \| \ba - \bb \|^2 + \int_{\RR^d\times\RR^d}  \| x - y \|^2 \dd\pi(x, y) $
\end{enumerate}
Plugging (i)-(iii) into \eqref{eq:ent_ot}, we get $\OT_\sigma(\alpha, \beta) = \OT_\sigma(\bar{\alpha}, \bar{\beta}) +  \| \ba - \bb \|^2$. 
\end{proof}

\paragraph{Proof of \Cref{prop:sinkhorn-transform}.}

\begin{proof}
The exponent inside the integral can be written as:
\begin{align*}
    e^{\tfrac{-\|x- y\|^2}{2\sigma^2} + h(y)} \dd\alpha(y) &\propto  e^{\tfrac{-\|x- y\|^2}{2\sigma^2} - \frac{1}{2}(y^\top\bX y - y^\top \bA^{-1}y)} \dd y \\
    &\propto e^{-\tfrac{1}{2}(y^\top(\frac{\Id}{\sigma^2} + \bX +\bA^{-1})y) + \frac{x^\top y}{\sigma^2}} \dd y
\end{align*}
which is integrable if and only if $\bX + \bA^{-1} + \frac{1}{\sigma^2}\Id 
\succ 0$.
Moreover, up to a multiplicative factor, the exponentiated Sinkhorn transform is equivalent to a Gaussian convolution of an exponentiated quadratic form. Lemma \ref{lem:convolution} applies:  
\begin{align*}
    e^{-T_\alpha(h)} &= \int_{\RR^d} e^{\tfrac{-\|x- y\|^2}{2\sigma^2} + f(y)}\dd \alpha(y) \\
&\propto \int_{\RR^d} e^{\tfrac{-\|x- y\|^2}{2\sigma^2} + \Qcal(\bX)(y) + \Qcal(\bA^{-1})(y)} \dd y \\
    &\propto \exp\left(\Qcal\left(\tfrac{\Id}{\sigma^2}\right)\right) \star \exp\left(\Qcal(\bX) +  \Qcal(\bA^{-1})\right)\\
         &\propto \exp\left(\Qcal\left(\tfrac{\Id}{\sigma^2}\right)\right) \star \exp\left(\Qcal(\bX + \bA^{-1})\right)\\
         &\propto \exp\left(\Qcal((\Id + \sigma^2\bX + \sigma^2\bA^{-1})^{-1}(\bX + \bA^{-1}))\right).\\
         &\propto \exp\left(\Qcal(\frac{1}{\sigma^2}\bX'^{-1}(\bX' - \Id))\right).\\
         &\propto \exp\left(\Qcal(\frac{1}{\sigma^2}(\Id - \bX'^{-1}))\right).
\end{align*}
Therefore $T_\alpha(h)$ is up to an additive constant given by $\Qcal(\frac{1}{\sigma^2}(\bX'^{-1} - \Id))$.

Finally, since $\bB$ and $\bX'$ are positive definite, the positivity condition of $\bY'$ holds and $T_\beta$ can be applied again to get $T_\beta(T_\alpha(h))$.
\end{proof}


\paragraph{Proof of \Cref{prop:bures_sinkhorn_convergence}.}

\begin{proof}
Let $\bU_0 = \bV_0 = 0$. Applying \Cref{prop:sinkhorn-transform}
to the initial pair of potentials $\Qcal(\bU_0), \Qcal(\bV_0)$ leads to the sequence of quadratic Sinkhorn potentials
 $\frac{f_n}{2\sigma^2} = \Qcal(\bU_n) $ and
 $\frac{g_n}{2\sigma^2} = \Qcal(\bV_n)$ where:
 \begin{align*}
    \begin{split}
     \bV_{n+1} = \frac{1}{\sigma^2}((\sigma^2 \bU_{n} + \sigma^2 \bA^{-1} + \Id)^{-1} - \Id) \\ 
      \bU_{n+1} = \frac{1}{\sigma^2}((\sigma^2 \bV_{n + 1} + \sigma^2\bB^{-1} +\Id)^{-1} - \Id).
      \end{split}
 \end{align*}
The change of variable:
\begin{align*}
 \begin{split}
    \bF_n = \sigma^2 \bU_n + \sigma^2 \bA^{-1} + \Id\\
    \bG_n = \sigma^2 \bV_n + \sigma^2 \bB^{-1} + \Id\\ \end{split}
\end{align*} leads to \eqref{eq:sinkhorn-fg}.

We turn to show that this algorithm converges.
First, note that since $\bF_0, \bG_0 \in \PD$, a straightforward induction shows that $\forall n \geq 0, \bF_n, \bG_n \in \PD$. Next, let us write the decoupled iteration on $\bF$:
\begin{equation}\label{eq:decoupled_F}
    \bF \leftarrow \sigma^2\bA^{-1} + (\sigma^2\bB^{-1}+\bF^{-1})^{-1}
\end{equation}
Let $\forall \bX \in \PD, \phi(\bX) \defeq \sigma^2\bA^{-1} + (\sigma^2\bB^{-1}+\bX^{-1})^{-1} \in \PD$. The first differential of $\phi$ admits the following expression:  
\begin{equation}
   \forall \bX \in \PD, \forall \bH \in \RR^{d \times d}, D\phi(\bX)[\bH] = (\Id + \sigma^2 \bX\bB^{-1})^{-1}\bH(\sigma^2 \bB^{-1}\bX + \Id)^{-1}.
\end{equation}
Hence, $\normop{D\phi(\bX)[\bH]} \leq \normop{(\Id + \sigma^2 \bX\bB^{-1})^{-1}}^2\normop{\bH}$. Plugging $\bH = \Id$, we get that $\normop{D\phi(\bX)} = \normop{(\Id + \sigma^2 \bX\bB^{-1})^{-1}}^2$.  Finally, by matrix similarity 
\begin{equation*}
\normop{(\Id + \sigma^2 \bX\bB^{-1})^{-1}} = \normop{(\Id + \sigma^2 \bB\mrt\bX\bB\mrt)^{-1}} < 1\enspace,
\end{equation*}
which implies that $\normop{D\phi(\bX)} < 1$ for $\bX \in \PD$ and $\sigma^2 > 0$. The same arguments hold for the iterates $(\bG_n)_{n \geq 0}$.

From \eqref{eq:decoupled_F} and using Weyl's inequality, we can bound the smallest eigenvalue of $\bF_n$ from under: $\forall n, \lambda_d(\bF_n) \geq \frac{\sigma^2}{\lambda_1(\bA)}$ (where $\lambda_d(\bF)$ is the smallest eigenvalue of $\bF$ and $\lambda_1(\bA)$ is the biggest eigenvalue of $\bA$). Hence, the iterates live in $\mathcal{A} \defeq \PD \cap \{\bX : \lambda_d(\bX) \geq \frac{\sigma^2}{\lambda_1(\bA)}\}$. Finally, for all $\bX \in \mathcal{A}$, 
\begin{align*}
    \normop{(\Id + \sigma^2 \bB\mrt\bX\bB\mrt)^{-1}} &= \frac{1}{\lambda_d(\Id + \sigma^2 \bB\smrt\bX\bB\smrt)}\\
    &= \frac{1}{1 + \sigma^2 \lambda_d(\bB\smrt\bX\bB\smrt)}\\
    &\leq \frac{1}{1 + \sigma^2 \lambda_d(\bB^{-1})\lambda_d(\bX)}\\
    &\leq \frac{1}{1 +\frac{\sigma^4}{\lambda_1(\bB)\lambda_1(\bA)}}
\end{align*}
Which proves the uniform bound $\blacksquare$

\end{proof}


\paragraph{Proof of \Cref{prop:closed_form_matrix}.}

\begin{proof}

Combining the two equations in \eqref{eq:optim-fg} yields
\begin{align}
&\bG = \sigma^2 \bB^{-1} + (\bG^{-1} + \sigma^2 \bA^{-1})^{-1} \nonumber \\
& \Leftrightarrow \bG\bA^{-1} = \sigma^2 \bB^{-1}\bA^{-1} + (\bA\bG^{-1} + \sigma^2 \Id)^{-1} \nonumber \\
 &\Leftrightarrow \bC^{-1} = \sigma^2 (\bA\bB)^{-1} + (\bC + \sigma^2 \Id)^{-1} \nonumber\\
&\Leftrightarrow \bC^{-1}(\bC + \sigma^2\Id) = \sigma^2 (\bA\bB)^{-1}(\bC + \sigma^2\Id) + \Id \nonumber\\
&\Leftrightarrow \Id + \sigma^2\bC^{-1} = \sigma^2 (\bA\bB)^{-1}(\bC + \sigma^2\Id) + \Id
 \nonumber\\
&\Leftrightarrow \bC + \sigma^2\Id = \sigma^2 (\bA\bB)^{-1}(\bC + \sigma^2\Id)\bC + \bC
\nonumber\\
&\Leftrightarrow \bC^2 + \sigma^2\bC - \bA\bB = 0.
\end{align}

Given that $\bA$ and $\bG^{-1}$ are positive, their product $\bC = \bA\bG^{-1}$ can be written: $\bA\bG^{-1} = \bA\rt(\bA\rt\bG^{-1}\bA\rt)\bA\mrt$, thus $\bA\bG^{-1}$ is similar to the positive matrix $\bA\rt\bG^{-1}\bA\rt$. Therefore, one can write an eigenvalue decomposition of $\bC = \bP\Sigma\bP^{-1}$ with a positive diagonal matrix $\Sigma$. Substituting in \eqref{eq:fixedpoint-C}, it follows that $\bC$ and $\bA\bB$ share the same eigenvectors with modified eigenvalues. Thus, it is sufficient to find the real roots of the polynomial $x \mapsto x^2 + \sigma^2 x - ab $ with $a, b \in \RR_{++}$ which are given by: $x_1 = -\frac{\sigma^{2}}{2} - \sqrt{ab + \frac{\sigma^4}{4}}$ and $x_2 = -\frac{\sigma^{2}}{2} + \sqrt{ab + \frac{\sigma^4}{4}}$. Since $\bC$ is the product of the positive definite matrices $\bG^{-1}$ and $\bA$, its eigenvalues are all positive. Discarding the negative root, the closed form follows immediately. 

Indeed, by direct calculation, computing the square of the solution $\bC$ leads to the equation \eqref{eq:fixedpoint-C}:
\begin{align*}
\bC^2 &= \bA\bB + \frac{\sigma^4}{2}\Id  - \sigma^2\left(\bA\bB + \frac{\sigma^4}{4} \Id\right)\rt \\
&=\bA\bB - \sigma^2\bC.
\end{align*}
The second equality is obtained by observing that
\begin{align*}
    &(\bA\rt(\bA\rt\bB\bA\rt + \tfrac{\sigma^4}{4}\Id)\rt\bA\mrt)^2 =\bA\rt(\bA\rt\bB\bA\rt + \tfrac{\sigma^4}{4}\Id)\bA\mrt = \bA\bB + \tfrac{\sigma^4}{4}\Id,
\end{align*}
i.e. that 
\begin{equation*}
\left(\bA\bB + \tfrac{\sigma^4}{4} \Id\right)\rt = \bA\rt(\bA\rt\bB\bA\rt +  \tfrac{\sigma^4}{4}\Id)\rt\bA\mrt .  
\end{equation*}
\end{proof}

\paragraph{Proof of \Cref{lemma:optim-loss}}

\begin{proof}
It follows from elementary properties of Gaussian measures that the first and second marginals of $\pi$ are respectively $\alpha$ and $\beta$. Hence,
\begin{align}
    \int_{\RR^d\times\RR^d} \|x - y\|^2 \d \pi(x, y) &= \int_{\RR^d\times\RR^d} \|x\|^2 \d \pi(x, y) + \int_{\RR^d\times\RR^d} \|y\|^2 \d \pi(x, y)  -2\int_{\RR^d\times\RR^d} \dotp{x}{y} \d \pi(x, y)\\
    &=\int_{\RR^d} \|x\|^2 \d \alpha(x) + \int_{\RR^d} \|y\|^2 \d \beta( y)  -2\int_{\RR^d\times\RR^d} \dotp{x}{y} \d \pi(x, y)\\
    &= \tr(\bA) + \tr(\bB) - 2\tr(\bC).
\end{align}
Next, using the closed form expression of the Kullback-Leibler divergence between Gaussian measures,
\begin{align}
  \KL\left(\pi\|\alpha\otimes\beta\right) &= \tfrac{1}{2}\left(\tr\left[\left(\begin{smallmatrix} \bA & 0 \\ 0 & \bB \end{smallmatrix}\right)^{-1}\left(\begin{smallmatrix} \bA & \bC \\ \bC^T & \bB \end{smallmatrix}\right)\right] - 2n + \log\det\left(\begin{smallmatrix} \bA & 0 \\ 0 & \bB \end{smallmatrix}\right) - \log\det\left(\begin{smallmatrix} \bA & \bC \\ \bC^T & \bB \end{smallmatrix}\right) \right)\\
  &=\tfrac{1}{2}\left( \log\det \bA + \log\det \bB - \log\det \left(\begin{smallmatrix} \bA & \bC \\ \bC^T & \bB \end{smallmatrix}\right) \right).
\end{align}
\end{proof}

\paragraph{Optimal transport plan and $\OT_{\sigma}$}
\begin{align*}
\frac{\dd\pi}{\dd x\dd y}(x, y) &= \exp\left(\frac{f(x) + g(y) - \|x - y\|^2}{2\sigma^2}\right) \frac{\dd\alpha}{\dd x}(x) \frac{\dd\beta}{\dd y}(y)\\
&\propto \exp\left(\Qcal(\bA^{-1})(x) + \frac{f(x) + g(y) - \|x - y\|^2}{2\sigma^2} + \Qcal(\bB^{-1})(y)\right) \\
    &\propto \exp\left(\Qcal(\bU + \bA^{-1})(x) + \Qcal(\bV + \bB^{-1})(y) + \Qcal(\begin{smallmatrix}\frac{\Id}{\sigma^2} & - \frac{\Id}{\sigma^2} \\ - \frac{\Id}{\sigma^2} & \frac{\Id}{\sigma^2} \end{smallmatrix})(x, y)\right)\\
    &= \exp\left(\Qcal(\begin{smallmatrix}\bU + \bA^{-1} & 0 \\ 0 & \bV + \bB^{-1} \end{smallmatrix})(x, y) + \Qcal(\begin{smallmatrix}\frac{\Id}{\sigma^2} & - \frac{\Id}{\sigma^2} \\ - \frac{\Id}{\sigma^2} & \frac{\Id}{\sigma^2} \end{smallmatrix})(x, y)\right)\\
    &= \exp\left( \Qcal(\begin{smallmatrix}\frac{\Id}{\sigma^2} + \bU + \bA^{-1} & - \frac{\Id}{\sigma^2}  \\ - \frac{\Id}{\sigma^2} & \frac{\Id}{\sigma^2} +\bV + \bB^{-1} \end{smallmatrix})(x, y)\right)\\
    &= \exp\left( \Qcal(\begin{smallmatrix}\frac{\bF}{\sigma^2} & - \frac{\Id}{\sigma^2}  \\ - \frac{\Id}{\sigma^2} &  \frac{\bG}{\sigma^2} \end{smallmatrix})(x, y)\right)\\
&=\exp\left( \Qcal(\Gamma)(x, y)\right)
\end{align*}
with $\Gamma \defeq \begin{pmatrix}\frac{\bF}{\sigma^2} & - \frac{\Id}{\sigma^2}  \\ - \frac{\Id}{\sigma^2} &  \frac{\bG}{\sigma^2} \end{pmatrix}$. Moreover, since $\frac{\bG}{2\sigma^2} \succ 0$ , and its Schur complement satisfies $\frac{\bF}{\sigma^2} - \frac{1}{\sigma^2}\bG^{-1} = \bA^{-1} \succ 0$, we have that $\Gamma \succ 0$. Therefore $\pi$ is a Gaussian $\Ncal(\bH)$ with the covariance matrix given by the block inverse formula:
\begin{align}
    \bH &= \Gamma^{-1} \\
    &= \sigma^2\begin{pmatrix} 
       (\bF-\bG^{-1})^{-1} & (\bG\bF-\Id)^{-1} \\
         (\bF\bG-\Id)^{-1}  &  (\bG-\bF^{-1})^{-1}\\
    \end{pmatrix} \\
    &= \begin{pmatrix} 
       \bA & \bC \\
         \bC^\top  & \bB\\
    \end{pmatrix},
\end{align}
where we used the optimality equations \eqref{eq:optim-fg} and the definition of $\bC = \bA\bG^{-1}.$

We can now conclude the proof of \Cref{thm:otclosedform} by computing $\OT_\sigma(\alpha, \beta)$ using \Cref{lemma:optim-loss}. 
Let $\bR = \bA^{\frac{1}{2}} \bB \bA^{\frac{1}{2}}$. Using the closed form expression of $\bC$ in \eqref{eq:closed-form-C}, it first holds that
\begin{align}
\label{eq:lemma-Z}
    \begin{split}
        \bZ \defeq \bA^{-\frac{1}{2}} \bC \bA^{\frac{1}{2}}
        = (\bR + \tfrac{\sigma^4}{4} \Id)\rt -\tfrac{\sigma^2}{2} \Id.
    \end{split}
\end{align}
Moreover, since $\bR = \bR^\top$, it holds that $\bZ = \bZ^\top$. Hence,
\begin{align}
    \label{eq:logdet}
    \begin{split}
        \det \left(\begin{smallmatrix} \bA & \bC \\ \bC^T & \bB \end{smallmatrix}\right) = &\det(\bA)\det(\bB - \bC^{\top}\bA^{-1}\bC) \\
        &=\det(\bA^{\frac{1}{2}} \bB \bA^{\frac{1}{2}} - \bA^{\frac{1}{2}}\bC^{\top}\bA^{-1}\bC \bA^{\frac{1}{2}}) \\
        &= \det(\bR - \bZ^\top \bZ) \\
        &= \det(\bR - \bZ^2) \\
        &= \det(\sigma^2 (\bR + \frac{\sigma^4}{4}\Id)^{\frac{1}{2}} - \frac{\sigma^4}{2}\Id)\\
        &=({\frac{\sigma^2}{2}})^d \det((4\bR + \sigma^4 \Id)^{\frac{1}{2}} - \sigma^2 \Id).
        \end{split}
\end{align}
Since the matrices inside the determinant commute, we can use the identity $\bP - \bQ = (\bP^2 - \bQ^2)(\bP + \bQ)^{-1}$ to get rid of the negative sign. \Cref{eq:logdet} then becomes:
\begin{align*}
({\frac{\sigma^2}{2}})^d \det((4\bR + \sigma^4 \Id)^{\frac{1}{2}} - \sigma^2 \Id) 
&= ({\frac{\sigma^2}{2}})^d \det(4\bR)\det\left(((4\bR + \sigma^4 \Id)^{\frac{1}{2}} + \sigma^2 \Id)^{-1}\right) \\
&= ({2\sigma^2})^d \det(\bA\bB)\det\left(((4\bR + \sigma^4 \Id)^{\frac{1}{2}} + \sigma^2 \Id)^{-1}\right).
\end{align*}
Plugging this expression in \eqref{eq:optim-ent}, the determinant of $\bA$ and $\bB$ cancel out and we finally get:
\begin{align*}
\begin{split}
\Bures_{\sigma}(\bA, \bB) &= \tr(\bA) + \tr(\bB) - \tr(4\bA^{\frac{1}{2}} \bB \bA^{\frac{1}{2}} +\sigma^4 \Id )^{\frac{1}{2}} + d\sigma^2 - \\ & \sigma^2d\log(2\sigma^2) + \sigma^2\log\det\left((4\bA^{\frac{1}{2}} \bB \bA^{\frac{1}{2}} + \sigma^4 \Id)^{\frac{1}{2}} + \sigma^2 \Id\right).
\end{split}
\end{align*}


\paragraph{Proof of \Cref{prop:bures_sinkhorn_matrix_problem}}

\begin{proof}
Using \Cref{lemma:optim-loss}, \cref{eq:ent_ot} becomes
\begin{align*}
    \mathfrak{B}_\sigma^2(\bA, \bB) = \underset{\bC : \left(\begin{smallmatrix} \bA & \bC \\ \bC^T & \bB \end{smallmatrix}\right) \geq 0}{\min} & \Big\{ \tr(\bA) + \tr(\bB) - 2\tr(\bC) + \sigma^2( \log\det \bA + \log\det \bB - \log\det\left(\begin{smallmatrix} \bA & \bC \\ \bC^T & \bB \end{smallmatrix}\right)) \Big\},
\end{align*}
which gives \cref{eq:ent_bures}. Let us now prove \cref{eq:primal_K}. A necessary and sufficient condition for $\left(\begin{smallmatrix} \bA & \bC \\ \bC^T & \bB \end{smallmatrix}\right) \geq 0$ is that there exists a contraction $\bK$ (i.e. $\bK \in \RR^d : \|\bK\|_{\text{op}} \leq 1$) such that $\bC = \bA\rt \bK \bB\rt$~\citep[Ch. 1]{bhatia07psd}.\footnote{Another immediate NSC is $\bA \geq \bC\bB^{-1}\bC^T$} With this parameterization, we have (using Schur complements) that
\begin{align*}
    \det\left(\begin{smallmatrix} \bA & \bC \\ \bC^T & \bB \end{smallmatrix}\right) &= \det \bB \det (\bA - \bC \bB^{-1}\bC^\top)\\
    &= \det \bB \det \bA  \det (\Id - \bK \bK^\top)
\end{align*}
Hence, injecting this in \Cref{eq:ent_bures}, we have the following equivalent problem:
\begin{align}
   \mathfrak{B}_\sigma^2(\bA, \bB) = \underset{\bK \in \RR^{d\times d} : \|\bK\|_{\text{op}} \leq 1}{\min} \tr\bA + \tr\bB - 2 \tr{\bA\rt \bK \bB\rt} - \sigma^2 \ln \det(\Id - \bK \bK^\top)
\end{align}
Let's prove that both problems are convex.
\begin{itemize}
\item \eqref{eq:ent_bures}: The set $\{\bC : \left( \begin{smallmatrix} \bA & \bC\\ \bC^T &\bB \end{smallmatrix} \right) \geq 0 \}$ is convex, since $\left( \begin{smallmatrix} \bA & \bC_1\\ \bC_1^T & \bB \end{smallmatrix} \right) \geq 0$ and $\left( \begin{smallmatrix} \bA & \bC_2\\ \bC_2^T&  \bB \end{smallmatrix} \right) \geq 0$ implies that $\left( \begin{smallmatrix} \bA & (1-\theta)\bC_1 + \theta\bC_2 \\ (1-\theta)\bC_1^T + \theta\bC_2^T & \bB \end{smallmatrix} \right) = (1-\theta)\left( \begin{smallmatrix} \bA & \bC_1\\ \bC_1^T &  \bB \end{smallmatrix} \right) + \theta\left( \begin{smallmatrix} \bA & \bC_2\\ \bC_2^T & \bB \end{smallmatrix} \right) \geq 0.$
Following the same decomposition, the concavity of the $\log\det$ function implies that $\bC \rightarrow \log\det  \left( \begin{smallmatrix} \bA & \bC\\ \bC^T &\bB \end{smallmatrix} \right)$ is concave, and hence that the objective function of \eqref{eq:ent_bures} is convex.
\item \eqref{eq:primal_K}: The ball $\Bcal_{\text{op}} \defeq \{\bK \in \RR^{d\times d} : \|\bK\|_{\text{op}} \leq 1\}$ is obviously convex. Hence, there remains to prove that $f(\bK) : \bK \in \Bcal_{\text{op}} \rightarrow \log\det(\Id - \bK\bK^\top)$ is concave. Indeed, it holds that $f(\bK) =
\log\det\left(\begin{smallmatrix} \Id & \bK \\ \bK^T & \Id \end{smallmatrix}\right)$. Hence, $\forall \bK, \bH \in \Bcal_{\text{op}}, \forall t \in [0, 1]$, 
\begin{align*}
    f((1-t)\bK + t \bH) &= \log\det \left\{(1-t)\left(\begin{smallmatrix} \Id & \bK \\ \bK^T & \Id \end{smallmatrix}\right) + t\left(\begin{smallmatrix} \Id & \bH \\ \bH^T & \Id \end{smallmatrix}\right) \right\}\\
    &\geq (1-t)\log\det \left(\begin{smallmatrix} \Id & \bK \\ \bK^T & \Id \end{smallmatrix}\right) + t\log\det\left(\begin{smallmatrix} \Id & \bH \\ \bH^T & \Id \end{smallmatrix}\right)\\
    &= (1-t) f(\bK) + t f(\bH),
\end{align*}
where the second line follows from the concavity of $\log\det$.
\end{itemize}
\end{proof}


\paragraph{Proof of \Cref{prop:dual_problem}}

\begin{proof}
By \Cref{prop:bures_sinkhorn_matrix_problem}, \eqref{eq:ent_bures} is convex, hence strong duality holds. Ignoring the terms not depending on $\bC$, problem \eqref{eq:ent_bures} can be written using the redundant parameterization $\bX = \left(\begin{smallmatrix} \bX_1 & \bX_2 \\ \bX_3 & \bX_4 \end{smallmatrix}\right)$:
\begin{align}\label{eq:ent_bures_relaxed}
    \mathfrak{D}(\bA, \bB) &\defeq \underset{\substack{\bX \succ 0 \\
    \bX_1 = \bA, \bX_4 = \bB }}{\min}
      - \tr(\bX_2)  - \tr(\bX_3)
    - \sigma^2 \log\det\left(\bX\right) \\
        &=\underset{\substack{\bX \succ 0 \\
    \bX_1 = \bA, \bX_4 = \bB }}{\min}
      - \langle \bX, \left(\begin{smallmatrix} 0 & \Id\\ \Id & 0\end{smallmatrix}\right)\rangle
    - \sigma^2 \log\det\left(\bX\right) \\
        &=\underset{\substack{\bX \succ 0 \\
    \bX_1 = \bA, \bX_4 = \bB }}{\min} \Fcal(\bX),
\end{align}
where the functional $\Fcal$ is convex. Moreover, its Legendre transform is given by:
\begin{align*}
\Fcal^\star(\bY)& = \max_{\substack{\bX \succ 0}}
\langle \bX, \bY +  \left(\begin{smallmatrix} 0 & \Id\\ \Id & 0\end{smallmatrix}\right) \rangle + \sigma^2 \log\det\left(\bX\right) \\
&=
\left(-\sigma^2\log\det)^\star \left(\bY +  \left(\begin{smallmatrix} 0 & \Id\\ \Id & 0\end{smallmatrix}\right)
\right)\right) \\
&= \sigma^2(-\log\det)^\star \left(\frac{1}{\sigma^2}\left(\bY +  \left(\begin{smallmatrix} 0 & \Id\\ \Id & 0\end{smallmatrix}\right)
\right)\right) \\
&= - \sigma^2{\log\det} \left(-\frac{1}{\sigma^2}\left(\bY +  \left(\begin{smallmatrix} 0 & \Id\\ \Id & 0\end{smallmatrix}\right)
\right)\right) - 2\sigma^2 d \\
&= - \sigma^2{\log\det} \left(-\left(\bY +  \left(\begin{smallmatrix} 0 & \Id\\ \Id & 0\end{smallmatrix}\right)
\right)\right) - 2d(\sigma^2 - \sigma^2\log(\sigma^2)).
\end{align*}
Let $\Hcal$ be the linear operator $\Hcal: \bX \mapsto (\bX_1, \bX_4)$. Its conjugate operator is defined on $\Scal^d_{++} \times \Scal^d_{++}$ and is given by $\Hcal^\star(\bF, \bG) =\left(\begin{smallmatrix} \bF &0 \\ 0 & \bG\end{smallmatrix}\right)$. Therefore, Fenchel's duality theorem leads to:
\begin{align*}
\label{eq:dual_ent_bures_simplified}
\mathfrak{D}(\bA, \bB) &=\underset{\bF, \bG \succ 0}{\max} 
  -\langle \bF, \bA\rangle - \langle \bG, \bB\rangle -
   \Fcal^\star\left(-\Hcal^\star(\bF, \bG)\right)\\
  &=\underset{\bF, \bG \succ 0}{\max} 
  -\langle \bF, \bA\rangle - \langle \bG, \bB\rangle + \sigma^2
   \log\det\left(\begin{smallmatrix} \bF & -\Id \\ -\Id & \bG\end{smallmatrix} \right) +  2d(\sigma^2 - \sigma^2\log(\sigma^2))
\\
    &= \underset{\bF, \bG \succ 0}{\max}
  - \langle \bF, \bA\rangle  - \langle \bG, \bB\rangle + {\sigma^2} \log\det\left(\bF\bG - \Id\right) + 2d(\sigma^2 - \sigma^2\log(\sigma^2))
\end{align*}
Where the last equality follows from the fact that $\Id$ and $\bG$ commute.
Therefore, reinserting the discarded trace terms, the dual problem of \eqref{eq:ent_bures} can be written as
\begin{align}
&\underset{\bF, \bG \succ 0}{\max}\Big\{
   -\dotp{\bF}{\bA}  - \dotp{\bG}{\bB} + \sigma^2\log\det\left(\bF\bG - \Id\right) \nonumber \\
   & +\tr(\bA) + \tr(\bB) +\sigma^2\log\det\bA \bB  + 2 d\sigma^2(1 - \log\sigma^2))\Big\}.
\end{align}
\end{proof}


\paragraph{Proof of \Cref{prop:envelop_theorem}}

\begin{proof}
(i) \emph{Optimality:} Canceling out the gradients in \cref{eq:dual_ent_bures} leads to the following optimality conditions:
\begin{align}
    \begin{split}
    \label{eq:optimality-conditions}
    -A + {\sigma^2} \bG(\bF\bG - \Id)^{-1} = 0 \\
    -B + {\sigma^2} (\bF\bG - \Id)^{-1}\bF = 0,
    \end{split}
\end{align}
i.e.
\begin{align}
    \begin{split}
    \label{eq:optimality-conditions_2}
    \bF &= \sigma^2 \bA^{-1} + \bG^{-1} \\
    \bG &= \sigma^2 \bB^{-1} + \bF^{-1}
    \end{split}
\end{align}
Thus $(\bF, \bG)$ is a solution of the Sinkhorn fixed point equation \eqref{eq:optim-fg}. 

(ii) \emph{Differentiabilty:} Using Danskin's theorem on problem \eqref{eq:dual_ent_bures} leads to the formula of the gradient as a function of the optimal dual pair $(\bF, \bG)$. Indeed, keeping in mind that $\nabla_{\bA} \log\det(\bA) = -\bA^{-1}$ and using the change of variable of \Cref{prop:bures_sinkhorn_convergence}, we recover the dual potentials of \Cref{cor:closedform-potentials}:
\begin{align*}
    \nabla \mathfrak{B}_{\sigma^2}(\bA, \bB) &= \left(\Id - \bF^{*} + \sigma^2\bA^{-1} , \Id - \bG^{*} + \sigma^2\bB^{-1}\right) \\
    &= -\sigma^2 (\bU, \bV)
\end{align*}
Using \Cref{cor:closedform-potentials}, it holds that
\begin{align*}
    \nabla_{\bA} \mathfrak{B}_{\sigma^2}(\bA, \bB) &= -\sigma^2{\bU}\\
    &= \Id - \bB(\bC + \sigma^2\Id)^{-1} \\
    &= \Id - \bB\left((\bA\bB + \frac{\sigma^4}{4}\Id)\rt + \frac{\sigma^2}{2}\Id\right)^{-1} \\
    &= \Id - \bB\rt\left((\bB\rt\bA\bB\rt + \frac{\sigma^4}{4}\Id)\rt + \frac{\sigma^2}{2}\Id\right)^{-1}\bB\rt \\
    &= \Id - \bB\rt \left(\bD\rt + \frac{\sigma^2}{2}\Id\right)^{-1}\bB\rt,
\end{align*}
where $\bD\defeq \bB\rt\bA\bB\rt + \frac{\sigma^4}{4}\Id$.

(iii) \emph{Convexity:} Assume without loss of generality that $\bB$ is fixed and let $G: \bB \mapsto  \nabla_{\bA} \mathfrak{B}_{\sigma^2}(\bA, \bB)$. As long as $\sigma > 0$, $G$ is differentiable as a composition of differentiable functions. Let's show that the Hessian of $\psi: \bA \mapsto  \mathfrak{B}_{\sigma^2}(\bA, \bB)$ is a positive quadratic form. Take a direction $\bH \in \Scal^d_{+}$. It holds:
\begin{align*}
    \nabla^2_{\bA}\mathfrak{B}_{\sigma^2}(\bA, \bB)(\bH, \bH) &= \langle \bH, \jac_{G}(\bA)(\bH)\rangle \\
    &= \tr(\bH \jac_{G}(\bA)(\bH)).
\end{align*}
For the sake of clarity, let's write $G(\bA) = \Id - L(W(\phi(\bA)))$ with the following intermediary functions:
\begin{align*}
L: \bA &\mapsto \bB\rt\bA\bB\rt  \\
Q: \bA &\mapsto \bA\rt \\
\phi: \bA &\mapsto Q(L(\bA) + \frac{\sigma^4}{4}\Id) \\
W: \bA &\mapsto (\bA + \frac{\sigma^2}{2}\Id)^{-1}.
\end{align*}
Moreover, their derivatives are given by:
\begin{align*}
    \jac_L(\bA)(\bH) &= \bB\rt\bH\bB\rt \\
    \jac_W(\bA)(\bH) &= -  (\bA + \frac{\sigma^2}{2}\Id)^{-1}\bH  (\bA + \frac{\sigma^2}{2}\Id)^{-1}\\
    \jac_Q(\bA)(\bH) &= \bZ,
\end{align*}
where $\bZ \in \Scal^d_{+}$ is the unique solution of the Sylvester equation: $\bZ\bA\rt + \bA\rt\bZ = \bH$.

Using the chain rule:
\begin{align*}
    \jac_{G}(\bA)(\bH) &= -\jac_L(W(\phi(\bA)))(\jac_W(\phi(\bA))(\jac_\phi(\bA)(\bH))) \\
    &= -\bB\rt \jac_W(\phi(\bA))(\jac_\phi(\bA)(\bH)) \bB\rt\\
    &= \bB\rt  \left(\phi(\bA) + \frac{\sigma^2}{2}\Id\right)^{-1} \jac_\phi(\bA)(\bH)  \left(\phi(\bA) + \frac{\sigma^2}{2}\Id\right)^{-1} \bB\rt\\
   &= \bB\rt  \left(\bD\rt + \frac{\sigma^2}{2}\Id\right)^{-1} \jac_\phi(\bA)(\bH)  \left(\bD\rt + \frac{\sigma^2}{2}\Id\right)^{-1} \bB\rt.
\end{align*}
Again using the chain rule:
\begin{align*}
    \bY\defeq \jac_\phi(\bA)(\bH) &= \jac_Q(L(\bA) +\frac{\sigma^4}{4}\Id)((\jac_L(\bA))(\bH)) \\
    &= \jac_Q(L(\bA) +\frac{\sigma^4}{4}\Id)(\bB\rt\bH\bB\rt)\\
    &= \jac_Q(\bD)(\bB\rt\bH\bB\rt).
\end{align*}
Therefore, $\bY \succ 0 $ is the unique solution of the Sylvester equation:
\begin{equation*}
    \bY\bD\rt + \bD\rt\bY = \bB\rt\bH\bB\rt.
\end{equation*}
Combining everything:
\begin{align*}
\nabla^2_{\bA}\mathfrak{B}_{\sigma^2}(\bA, \bB)(\bH, \bH) &= \langle \bH, \jac_{G}(\bA)(\bH)\rangle \\
    &= \tr\left(\bH \jac_{G}(\bA)(\bH)\right)\\
    &= \tr\left(\bH \bB\rt  \left(\bD\rt + \frac{\sigma^2}{2}\Id\right)^{-1}\bY \left(\bD\rt + \frac{\sigma^2}{2}\Id\right)^{-1} \bB\rt\right) \\
    &= \tr\left(\bB\rt\bH \bB\rt  \left(\bD\rt + \frac{\sigma^2}{2}\Id\right)^{-1}\bY \left(\bD\rt + \frac{\sigma^2}{2}\Id\right)^{-1}\right).
\end{align*}
Since $\bH$ and $\bY$ are positive, the matrices $\bB\rt\bH \bB\rt$ and $\left(\bD\rt + \frac{\sigma^2}{2}\Id\right)^{-1}\bY \left(\bD\rt + \frac{\sigma^2}{2}\Id\right)^{-1}$ are positive semi-definite as well. Their product is similar to a positive semi-definite matrix, therefore the trace above is non-negative. 

Given that $\bA$ and $\bH$ are arbitrary positive semi-definite matrices, it holds that
$$ \nabla^2_{\bA}\mathfrak{B}_{\sigma^2}(\bA, \bB)(\bH, \bH)\geq 0$$
Therefore, $\bA \mapsto \mathfrak{B}_{\sigma^2}(\bA, \bB)$ is convex.

\emph{Counter-example of joint convexity:} If $\mathfrak{B}_{\sigma^2}$ were jointly convex , then $\delta \defeq: \bA \rightarrow \mathfrak{B}_{\sigma^2}(\bA, \bA)$ would be a convex function.

In the 1-dimensional case with $\sigma = 1$, one can see that this would be equivalent to $x \rightarrow \ln((x^2 + 1)\rt + 1) - (x^2 + 1)\rt$ being convex, whereas it is in fact strictly concave.

\emph{(iv) Minimizer of $\phi_\bB$}
With fixed $\bB$, cancelling the gradient of $\phi_\bB \defeq: \bA \mapsto \mathfrak{B}_{\sigma^2}(\bA, \bB)$ leads to $\bA = \bB - \sigma^2\Id$ which is well defined if and only if $\bB \succeq\sigma^2\Id$. However, if $\bB - \sigma^2\Id$ is not positive semi-definite, write the eigenvalue decomposition: $\bB = \bP\Sigma\bP^\top$ and define $\bA_0\defeq \bP(\Sigma - \sigma^2\Id)_+\bP^\top$ where the operator $x_+ = \max(x, 0)$ is applied element-wise.  Then: 
\begin{align*}
\nabla_\bA \phi_\bB(\bA_0) &=
\Id - \bP\Sigma\rt\bP^\top\left((\bP(\Sigma^2 - \sigma^2\Sigma)_+ \bP^\top + \frac{\sigma^4}{4}\Id)\rt + \frac{\sigma^2}{2}\Id\right)^{-1} \bP\Sigma\rt\bP^\top \\
&= \Id - \bP\Sigma\rt\left(((\Sigma^2 - \sigma^2\Sigma)_+  + \frac{\sigma^4}{4}\Id)\rt + \frac{\sigma^2}{2}\Id\right)^{-1} \Sigma\rt\bP^\top \\
&= \Id - \bP\Sigma\rt\left((\Sigma - \sigma^2\Id)_+   + \sigma^2\Id\right)^{-1} \Sigma\rt\bP^\top \\
&= \bP(\Id - \Sigma\rt\left((\Sigma - \sigma^2\Id)_+   + \sigma^2\Id\right)^{-1} \Sigma\rt)\bP^\top \\
&= \frac{1}{\sigma^2}\bP(\sigma^2\Id - \Sigma)_+ \bP^\top
\end{align*}
Thus, given that $(\Sigma - \sigma^2\Id)_+(\sigma^2\Id - \Sigma)_+ = 0$, it holds, for any $\bH \in \Scal^d_{+}$: 
\begin{align*}
\langle \bH - \bA_0, \nabla_\bA \phi_\bB(\bA_0)\rangle &= \langle \bP^\top\bH\bP - (\Sigma - \sigma^2\Id)_+, (\sigma^2\Id - \Sigma)_+\rangle \\
&= \langle \bP^\top\bH\bP, (\sigma^2\Id - \Sigma)_+\rangle \\
&= \tr(\bP^\top\bH\bP (\sigma^2\Id - \Sigma)_+) \geq 0
\end{align*}
Where the last inequality holds since both matrices are positive semi-definite. Given that $\phi_\bB$ is convex, the first order optimality condition holds so $\phi_\bB$ is minimized at $\bA_0$. 
\end{proof}

\paragraph{Proof of \Cref{thm:barycenters}}
\begin{proof}
This theorem is a generalization of \cite[Thm 3]{janati20} for multivariate Gaussians. First we are going to break it down using the centering lemma \ref{lemma:centering}. For any probability measure $\mu$, let $\bar{\mu}$ denote its centered transformation. The debiased barycenter problem is equivalent to:
\begin{align}
\label{eq:centering-bar}
\begin{split}
&\min_{\beta \in \Gcal} \sum_{k=1}^K w_k S_\sigma(\alpha_k, \beta) \\
&= \min_{\beta \in \Gcal} \sum_{k=1}^K w_k \OT_\sigma(\alpha_k, \beta) - \frac{1}{2}(\OT_\sigma(\alpha_k, \alpha_k) + \OT_\sigma(\beta, \beta))\\
&=
\min_{\beta \in \Gcal} \sum_{k=1}^K w_k \|\ba_k - \Esp_\beta(X)\|^2 + w_k\OT_\sigma(\bar{\alpha_k}, \bar{\beta}) - \frac{1}{2}(w_k\OT_\sigma(\bar{\alpha_k}, \bar{\alpha_k}) + \OT_\sigma(\bar{\beta}, \bar{\beta}))\\
&= \min_{\substack{\bb \in \RR^d \\ \beta \in \Gcal, \Esp_\beta(\bX) = 0}} \sum_{k=1}^K w_k \|\ba_k - \bb\|^2 + w_k\OT_\sigma(\bar{\alpha_k}, \beta) - \frac{1}{2}(w_k\OT_\sigma(\bar{\alpha_k}, \bar{\alpha_k}) + \OT_\sigma(\beta, \beta))
\end{split}
\end{align}
Therefore, since both arguments are independent, we can first minimize over $\bb$ to obtain $\Esp_\beta(\bX) = \bb = \sum_{k=1}^K w_k\ba_k$. Without loss of generality, we assume from now on that $\ba_k = 0$ for all $k$.

The rest of this proof is adapted from \citep{janati20}, Thm 3 to $d \geq 1$. \citet{janati20} showed that $S_\sigma$ is differentiable and convex (w.r.t. one measure at a time) on sub-Gaussian measures where the notion of differentiability is different from the usual Fréchet differentiability: a function $F: \Gcal \to \RR$ is differentiable at $\alpha$ if there exists $\nabla F(\alpha) \in \Ccal(\RR^d)$ such that for any displacement $t\delta\alpha$ with $t>0$ and $\delta\alpha= \alpha_1 - \alpha_2$ with $\alpha_1, \alpha_2 \in \Gcal$, and
\begin{equation}
\label{eq:differentiability}
F(\alpha + t \delta\alpha)  = F(\alpha) + t\langle \delta\alpha, \nabla F(\alpha)\rangle + o(t)\enspace,
\end{equation}
where $\langle \delta \alpha,\nabla F(\alpha)\rangle = \int_{\RR^d} \nabla F(\alpha) \dd \delta \alpha$.

Moreover, $F$ is convex if and only if for any $\alpha, \alpha' \in \Gcal$:
\begin{equation}
\label{eq:convexity}
F(\alpha)  \geq F(\alpha') + \langle \alpha - \alpha', \nabla F(\alpha')\rangle \enspace,
\end{equation}
Let $(f_k, g_k)$ denote the potentials associated with $\OT_\sigma(\alpha_k, \beta)$ and $h_{\beta}$ the autocorrelation potential associated with $\OT_\sigma(\beta, \beta)$. If $\beta$ is sub-Gaussian, it holds: $\nabla_\beta S_\sigma(\alpha_k, \beta) = g_k - h$. Therefore, from \eqref{eq:convexity} a probability measure $\beta$ is the debiased barycenter if and only if for any direction $\mu \in \Gcal$, the optimality condition holds:
\begin{align}
    \label{eq:optim-condition-bar}
    \begin{split}
    \langle \sum_{k=1}^K w_k \nabla_{\beta} S_\sigma(\alpha_k, \beta), \mu - \beta\rangle \geq 0 \\ 
    \Leftrightarrow \sum_{k=1}^K w_k\langle g_k - h_\beta, \mu - \beta\rangle\geq 0\enspace
    \end{split}
\end{align}
Moreover, the potentials $(f_k), (g_k)$ and $h$ must verify the Sinkhorn optimality conditions \eqref{eq:optimality-potentials} for all $k$ and for all x $\beta$-a.s and y $\alpha$-a.s:
\begin{equation}
	\left\{
	\begin{array}{ll}
	\label{seq:kkt-sdiv}
    e^{\tfrac{f_k(x)}{2\sigma^2}} \left(\int_{\RR^d} e^{\tfrac{-\|x-y\|^2 + g_k(y)}{2\sigma^2}}  \dd\beta(y)\right) = 1, \quad 
    e^{\tfrac{g_k(x)}{2\sigma^2}} \left(\int_{\RR^d} e^{\tfrac{-\|x-y\|^2 + f_k(y)}{2\sigma^2}}  \dd\alpha_k(y)\right) = 1. \\     e^{\tfrac{h(x)}{2\sigma^2}} \left(\int_{\RR^d} e^{\tfrac{-\|x-y\|^2 + h_\beta(y)}{2\sigma^2}}  \dd\beta(y)\right) = 1.
    \end{array}
	\right.
\end{equation}
We are going to show that for the Gaussian measure $\beta$ given in the statement of the theorem is well-defined and verifies all optimality conditions \eqref{seq:kkt-sdiv}. Indeed, assume that $\beta$ is a Gaussian measure given by $\Ncal(\bB)$ for some unknown $\bB \in S^d_+$ (remember that $\beta$ is necessarily centered, following the developments \eqref{eq:centering-bar}). The Sinkhorn equations can therefore be written as a system on positive definite matrices:
\begin{align*}
    \bF_k = \sigma^2\bA_k^{-1} + \bG_k^{-1}, \enspace \bG_k = \sigma^2\bB + \bF_k^{-1}, \enspace \bH = \sigma^2 \bB + \bH^{-1}
\end{align*}
where for all $k$:
\begin{align}
\label{eq:system-bar}
    \begin{split}
\frac{f_k}{2\sigma^2} = \Qcal(\frac{1}{\sigma^2}(\bG_k^{-1} - \Id)) + f_k(0) \\
\frac{g_k}{2\sigma^2} = \Qcal(\frac{1}{\sigma^2}(\bF_k^{-1} - \Id)) + g_k(0)\\
\frac{h}{2\sigma^2} = \Qcal(\frac{1}{\sigma^2}(\bH^{-1} - \Id)) + h_\beta(0)\\
\end{split}
\end{align}
Moreover, provided $\bB$ exists and is positive definite, the system \eqref{eq:system-bar} has a unique set of solutions $(\bF_k)_k, (\bG_k)_k, \bH$ given by:
\begin{align}
\label{eq:system-bar-closed-form}
    \begin{split}
\bF_k = \bB\bC_k^{-1}, \enspace \bG_k = \bC_k^{-1}\bA_k, \enspace \bH = \bB^{-1}\bJ
\end{split}
\end{align}
where $\bC_k = (\bA_k\bB + \frac{\sigma^4}{4}\Id)\rt - \frac{\sigma^2}{2}\Id$ and $\bJ = (\bB^2 + \frac{\sigma^4}{4}\Id)\rt + \frac{\sigma^2}{2}\Id$.
Therefore, the gradient in \eqref{eq:optim-condition-bar} can be written:
\begin{align}
\label{eq:opt-bar-matrices}
\begin{split}
&\sum_{k=1}^K w_k\langle g_k - h_\beta = \Qcal(\frac{1}{\sigma^2}(\sum_{k=1}^K w_k \bF_k^{-1} - \bH^{-1})) + \sum_{w=1}^K w_kg_k(0) - h_\beta(0) \\
&\propto \Qcal(\sum_{k=1}^K w_k \bC_k\bB^{-1} - \bJ^{-1}\bB) + \sum_{w=1}^K w_kg_k(0) - h_\beta(0)
\end{split}
 \end{align}
 and
 \begin{align}
\label{eq:opt-bar-matrices-grad}
\begin{split}
&\sum_{k=1}^K w_k \bC_k\bB^{-1} - \bJ^{-1}\bB \\
&= \sum_{k=1}^K w_k \bB\mrt(\bB\rt\bA_k\bB\rt + \frac{\sigma^4}{4}\Id)\rt \bB\mrt - \bB^{-1}(\bB^2 + \frac{\sigma^4}{4}\Id)\rt \\
&= \sum_{k=1}^K w_k \bB\mrt(\bB\rt\bA_k\bB\rt + \frac{\sigma^4}{4}\Id)\rt \bB\mrt - \bB\mrt(\bB^2 + \frac{\sigma^4}{4}\Id)\rt\bB\mrt \\
&= \bB\mrt\left(\sum_{k=1}^K w_k (\bB\rt\bA_k\bB\rt + \frac{\sigma^4}{4}\Id)\rt  - (\bB^2 + \frac{\sigma^4}{4}\Id)\rt \right)\bB\mrt
\end{split}
 \end{align}
 which is null if $\bB$ is a solution of the equation:
 \begin{equation}
     \label{eq:bar-fixed}
     \sum_{k=1}^K w_k (\bB\rt\bA_k\bB\rt + \frac{\sigma^4}{4}\Id)\rt  = (\bB^2 + \frac{\sigma^4}{4}\Id)\rt.
 \end{equation}
Therefore, for any probability measure $\mu \in \Gcal$:
\begin{align}
        \begin{split}
    \langle \sum_{k=1}^K w_k \nabla_{\beta} S_\sigma(\alpha_k, \beta), \mu - \beta\rangle 
    &= \langle \sum_{k=1}^K w_k g_k - h_\beta, \mu - \beta\rangle \\
    &= \langle \sum_{k=1}^K w_k g_k(0) - h_\beta, \mu - \beta\rangle \\
    &= \langle \sum_{w=1}^K w_kg_k(0) - h_\beta(0), \mu - \beta\rangle \\
    &= \left(\sum_{w=1}^K w_kg_k(0) - h_\beta(0)\right) \int (\dd\mu - \dd\beta) \\
    &= 0
    \end{split} 
\end{align}
since both measures integrate to 1. Therefore, the optimality condition holds.

To end the proof, all we need to show is that \eqref{eq:bar-fixed} admits a  positive definite solution. To show the existence of a solution, the same proof of \citet{agueh11} applies. Indeed, let $\lambda_k$ and $\Lambda_k$ denote respectively the smallest and largest eigenvalue of $\bA_k$. Let $\lambda = \min_k \lambda_k$ and $\Lambda = \max_k \Lambda_k$.
Let $K_{\lambda, \Lambda}$ be the convex compact subset of positive definite matrices $\bB$ such that $\Lambda\Id \succeq \bB \succeq \lambda\Id $. Define the map:
\begin{align*}
    T:& K_{\lambda, \Lambda} \to \Scal^d_{++} \\
      & \bB \mapsto \left(\left(\sum_{k=1}^K w_k (\bB\rt\bA_k\bB\rt + \frac{\sigma^4}{4}\Id)\rt\right)^2 - \frac{\sigma^4}{4}\Id\right)\rt
\end{align*}
Now for any $\bB \in K_{\lambda, \Lambda}$, it holds:
\begin{align}
 \lambda \Id \preceq T(\bB) \preceq  \Lambda \Id.
\end{align}
$T$ is therefore a continuous function that maps $K_{\lambda, \Lambda}$ to itself, thus Brouwer’s fixed-point theorem guarantees the existence of a solution.
\end{proof}
\paragraph{Proof of \Cref{prop:unbalanced-loss-opt}}

\begin{proof}
Using Fubini-Tonelli along with the optimality conditions \eqref{eq:opt-conditions-unbalanced}, the double integral can be written:
\begin{align*}
   \pi(\RR^d \times \RR^d) &= \int_{\RR^d\times\RR^d}e^{\frac{-\|x-y\|^2 + f(x) + g(y)}{2\sigma^2}}\dd\alpha(x)\dd\beta(y) \\ &= 
        \int_{\RR^d} \left( \int_{\RR^d} e^{\frac{-\|x-y\|^2 + f(x)}{2\sigma^2}}\dd\alpha(x)\right)e^{\frac{g(y)}{2\sigma^2}}\dd\beta(y) \\
        &=  \int_{\RR^d} e^{\frac{g(y)}{2\sigma^2}(1 - \frac{1}{\tau})}\dd\beta(y) \\
        &=  \int_{\RR^d} e^{-\frac{g(y)}{\gamma}}\dd\beta(y)
\end{align*}
And similarly:  $\pi(\RR^d \times \RR^d) =
\int_{\RR^d} e^{-\frac{f(x)}{\gamma}}\dd\alpha(x)$. 
Therefore, the three integrals in the dual objective \eqref{eq:unbalanced_ot_dual} are equal to $\pi(\RR^d \times \RR^d)$ which ends the proof.
\end{proof}


\begin{lemma}
\label{lem:sum-forms}[Sum of factorized quadratic forms]
Let $\bA, \bB \in S_d$ such that $\bA \neq \bB$ and $\ba, \bb \in \RR^d$. Denote $\alpha = (\bA, \ba)$ and $\beta = (\bB, \bb)$.
Let $P_\alpha(\bx) = -\frac{1}{2}(\bx-\ba)^\top \bA (\bx - \ba)$ and $P_\beta(\bx) = -\frac{1}{2}(\bx-\bb)^\top \bB (\bx - \bb)$. Then:
\begin{equation}
    \label{eq:sum-forms}
P_\alpha(x) + P_\beta(x) = -\frac{1}{2}\left((\bx - \bc)^\top\bC(\bx-\bc) + q_{\alpha, \beta}\right)
\end{equation}
where:
\begin{align}
    \label{eq:sum-forms-coefs}
    \left\{
    \begin{array}{ll}
    \bC &= \bA + \bB \\
    (\bA + \bB)\bc &= (\bA\ba + \bB\bb) \\
    q_{\alpha, \beta} &= \ba^\top \bA \ba + \bb^\top \bB \bb - c^\top \bC \bc
    \end{array}
    \right.
\end{align}
In particular, if $\bC = \bA + \bB$ is invertible, then:
\begin{align}
    \left\{
    \begin{array}{ll}
    &\bc = \bC^{-1}(\bA\ba + \bB\bb) \\
    &\bc^\top\bC\bc = (\bA\ba + \bB\bb)^\top \bC^{-1}(\bA\ba + \bB\bb)
    \end{array}
    \right.
\end{align}
\end{lemma}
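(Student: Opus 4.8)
This is a purely algebraic completing-the-square identity, so the plan is to expand both factorized quadratic forms into standard form, add them, and then complete the square in $\bx$ while matching coefficients against the target form on the right-hand side of \eqref{eq:sum-forms}.

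First I would expand each factorized form using only the symmetry of $\bA$ and $\bB$: $P_\alpha(\bx) = -\tfrac12(\bx^\top\bA\bx - 2(\bA\ba)^\top\bx + \ba^\top\bA\ba)$ and similarly for $P_\beta$. Adding the two and collecting terms gives $P_\alpha(\bx) + P_\beta(\bx) = -\tfrac12(\bx^\top(\bA+\bB)\bx - 2(\bA\ba+\bB\bb)^\top\bx + \ba^\top\bA\ba + \bb^\top\bB\bb)$, which already reads off the quadratic part $\bC = \bA+\bB$. Next, for any $\bc$ solving $(\bA+\bB)\bc = \bA\ba+\bB\bb$, I would expand $(\bx-\bc)^\top\bC(\bx-\bc) = \bx^\top\bC\bx - 2\bc^\top\bC\bx + \bc^\top\bC\bc$ (using the symmetry of $\bC$ to merge the two cross terms) and substitute $\bC\bc = \bA\ba+\bB\bb$, which gives $\bx^\top\bC\bx - 2(\bA\ba+\bB\bb)^\top\bx = (\bx-\bc)^\top\bC(\bx-\bc) - \bc^\top\bC\bc$. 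Plugging this back into the sum and setting $q_{\alpha,\beta} = \ba^\top\bA\ba + \bb^\top\bB\bb - \bc^\top\bC\bc$ yields \eqref{eq:sum-forms} with the coefficients \eqref{eq:sum-forms-coefs}. For the ``in particular'' clause, invertibility of $\bC$ makes $\bc = \bC^{-1}(\bA\ba+\bB\bb)$ the unique solution, and $\bc^\top\bC\bc = (\bA\ba+\bB\bb)^\top\bC^{-1}(\bA\ba+\bB\bb)$ follows since $\bC^{-1}$ is symmetric.

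There is no genuine obstacle here; the only point worth a remark is that, when $\bC = \bA+\bB$ is singular, the identity is meaningful only when $\bA\ba+\bB\bb$ lies in the range of $\bA+\bB$ (which is automatic in the positive definite regime used in \Cref{thm:unbalanced}), and that the constant $q_{\alpha,\beta}$ does not depend on the choice of solution $\bc$, since $\bc^\top\bC\bc = \bc^\top(\bA\ba+\bB\bb)$ depends on $\bc$ only through $\bC\bc$. The hypothesis $\bA\neq\bB$ is not needed for this identity.
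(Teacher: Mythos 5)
Your proof is correct and follows essentially the same route as the paper: expand both factorized forms into standard quadratic form and match coefficients (equivalently, complete the square using $\bC\bc = \bA\ba + \bB\bb$). Your observation that the hypothesis $\bA \neq \bB$ is not actually used is correct — the paper's proof invokes it only as a throwaway condition before matching coefficients, but the identity and the matching argument go through regardless.
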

\begin{proof}
On one hand,
\begin{align*}
    P_\alpha(x) + P_\beta(x) &=  -\frac{1}{2}\left((\bx - \ba)^\top\bA(\bx-\ba) + (\bx - \bb)^\top\bB(\bx-\bb)\right) \\
    &= -\frac{1}{2}\left(\bx^\top(\bA + \bB)\bx - 2\bx^\top(\bA\ba +\bB\bb) + \ba^\top\bA\ba + \bb^\top\bB\bb\right) \\
\end{align*}
On the other hand, for an arbitrary $\gamma=(\bc, \bC)$ and $q\in\RR$:
\begin{align*}
    P_\gamma(x) - \frac{q}{2} &= -\frac{1}{2}\left((\bx - \bc)^\top\bC(\bx-\bC) + q\right)  \\
    &= -\frac{1}{2}\left(x^\top\bC x - 2x^\top \bC\bc + \bc^\top\bC\bc + q\right) 
\end{align*}
If $\bA \neq \bB$, identification of the parameters of both quadratic forms leads to \eqref{eq:sum-forms-coefs}. 
\end{proof}
\begin{lemma}
\label{lem:convolution}[Gaussian convolution of factorized quadratic forms]
Let $\bA \in S_d$ and $\ba \in \RR^d$ and $\sigma > 0$ such that $\sigma^2 \bA + \Id \succ 0$.
Let $Q_\alpha(\bx) = -\frac{1}{2}(\bx-\ba)^\top \bA (\bx - \ba)$. Then the convolution of $e^{\Qcal_\alpha}$ by the Gaussian kernel $\Ncal(0, \frac{\Id}{\sigma^2})$  is given by:
\begin{equation}
    \label{eq:convolution}
     \Ncal(0, \frac{\Id}{\sigma^2}) \star \exp\left(\Qcal_\alpha\right) \defeq \int_{\RR^d}  \frac{1}{(2\pi\sigma^2)^{\frac{n}{2}}}\exp\left(-\frac{1}{2\sigma^2}\|.-y\|^2 + \Qcal_\alpha(y)\right)\dd y = c_{\alpha} \exp(\Qcal(\ba, \bJ))
\end{equation}
where:
\begin{align*}
    \bJ &= (\sigma^2\bA + \Id)^{-1}\bA\\
    c_\alpha &= \frac{1}{\sqrt{\det(\sigma^2 \bA + \Id)}}
\end{align*}
\end{lemma}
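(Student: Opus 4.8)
The identity \eqref{eq:convolution} is the classical ``convolution of two (unnormalized) Gaussians is an (unnormalized) Gaussian'' fact, specialized to an isotropic kernel, so the plan is simply to evaluate the Gaussian integral in closed form and read off the normalizing constant and the quadratic form. I would begin by writing out the argument of the exponential under the integral in \eqref{eq:convolution}, expanding both $\|x-y\|^2$ and $(y-\ba)^\top\bA(y-\ba)$ and regrouping the terms according to their dependence on the integration variable $y$. This puts the exponent into the form $-\tfrac12\big(y^\top\mathbf{M}y - 2\,y^\top\bv + c(x)\big)$ with $\mathbf{M}\defeq\tfrac{1}{\sigma^2}\Id+\bA$, $\bv\defeq\tfrac{1}{\sigma^2}x+\bA\ba$ and $c(x)\defeq\tfrac{1}{\sigma^2}\|x\|^2+\ba^\top\bA\ba$. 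This is exactly where the hypothesis enters: $\sigma^2\bA+\Id\succ0$ is, after dividing by $\sigma^2>0$, the same as $\mathbf{M}\succ0$, which is what makes the $y$-integral converge and also guarantees $\det(\sigma^2\bA+\Id)>0$, so that $c_\alpha$ is a well-defined positive real.

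Next I would complete the square in $y$, $y^\top\mathbf{M}y-2\,y^\top\bv=(y-\mathbf{M}^{-1}\bv)^\top\mathbf{M}(y-\mathbf{M}^{-1}\bv)-\bv^\top\mathbf{M}^{-1}\bv$, and evaluate the resulting centered Gaussian integral, $\int_{\RR^d}e^{-\frac12 z^\top\mathbf{M}z}\,\dd z=(2\pi)^{d/2}(\det\mathbf{M})^{-1/2}$. Multiplying this by the normalizing prefactor $(2\pi\sigma^2)^{-d/2}$ collapses the leading constant to $(\sigma^{2d}\det\mathbf{M})^{-1/2}=\det(\sigma^2\bA+\Id)^{-1/2}=c_\alpha$, which already matches the claimed normalization; what then remains is the single leftover exponent $-\tfrac12\big(c(x)-\bv^\top\mathbf{M}^{-1}\bv\big)$.

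Recasting that leftover exponent as the announced factorized quadratic form is the only step requiring any genuine algebra, and hence the part I expect to be the main (if still routine) obstacle. Substituting $\bv=\tfrac{1}{\sigma^2}x+\bA\ba$ and using that $\mathbf{M}$ commutes with $\bA$ (since $\mathbf{M}=\tfrac{1}{\sigma^2}\Id+\bA$), so that $\mathbf{M}^{-1}$ does too, the coefficient of the part quadratic in $x$ simplifies via $\tfrac{1}{\sigma^2}\Id-\tfrac{1}{\sigma^4}\mathbf{M}^{-1}=\tfrac{1}{\sigma^2}\mathbf{M}^{-1}\big(\mathbf{M}-\tfrac{1}{\sigma^2}\Id\big)=\tfrac{1}{\sigma^2}\mathbf{M}^{-1}\bA=(\sigma^2\bA+\Id)^{-1}\bA=\bJ$, the cross term reduces to $-2\,x^\top\bJ\ba$, and the $x$-independent term to $\ba^\top\bJ\ba$, each time using the identity $\Id-\mathbf{M}^{-1}\bA=\tfrac{1}{\sigma^2}\mathbf{M}^{-1}$. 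Hence the leftover exponent equals $-\tfrac12(x-\ba)^\top\bJ(x-\ba)$, i.e. the factorized quadratic form with matrix $\bJ$ centered at $\ba$, so the convolution is $c_\alpha\exp(\Qcal(\ba,\bJ))$ as claimed, finishing the proof. (When $\ba=0$ --- the only case actually invoked, in the proof of \Cref{prop:sinkhorn-transform} --- the cross and constant terms vanish and one obtains $c_\alpha\exp(\Qcal(\bJ))$ at once.)
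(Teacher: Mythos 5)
Your proof is correct and follows essentially the same route as the paper's: expand the exponent, complete the square in $y$, evaluate the resulting Gaussian integral to obtain $c_\alpha$, and then recast the leftover $x$-dependent quadratic as $\Qcal(\ba,\bJ)$ via the resolvent identity $\Id-\mathbf{M}^{-1}\bA=\tfrac{1}{\sigma^2}\mathbf{M}^{-1}$ (the paper states the algebraically equivalent Woodbury form $(\sigma^2\bA+\Id)^{-1}=\Id-\sigma^2(\sigma^2\bA+\Id)^{-1}\bA$). The only cosmetic difference is that the paper packages the completion-of-the-square step through its auxiliary Lemma on sums of factorized quadratic forms, whereas you carry it out inline; otherwise the arguments coincide.
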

\begin{proof}
Using \Cref{lem:sum-forms} one can write for any $x \in \RR^d$ considered fixed:
\begin{align*}
    -\frac{1}{2\sigma^2}\|x-y\|^2 + \Qcal_\alpha(y) &= \Qcal(x, \frac{\Id}{\sigma^2})(y) + \Qcal(\ba, \bA)(y) \\
    &= \Qcal(\bA\ba + \frac{x}{\sigma^2}, \bA + \frac{\Id}{\sigma^2})(y) + h(x)
\end{align*}
with $h(x)= -\frac{1}{2}\left(\ba^\top \bA \ba + \frac{1}{\sigma^2}\|x\|^2 - \frac{1}{\sigma^2}(\sigma^2 \bA\ba + x)^\top(\sigma^2\bA + \Id)^{-1}(\sigma^2\bA\ba + x)\right)$.
Therefore, the convolution integral is finite if and only if $\bA + \frac{\Id}{\sigma^2} \succ 0$ in which case we get the integral of a Gaussian density:
\begin{align*}
   \frac{1}{(2\pi\sigma^2)^{\frac{n}{2}}} \int_{\RR^d} \exp\left(\Qcal(\bA\ba + \frac{x}{\sigma^2}, \bA + \frac{\Id}{\sigma^2})(y) + h(x)\right)\dd (y)  &= \sqrt{\frac{\det(2\pi (\bA + \frac{\Id}{\sigma^2})^{-1})}{(2\pi\sigma^2)^{n}}} e^{h(x)}\\
   &= \frac{e^{h(x)}}{\sqrt{\det(\sigma^2\bA + \Id)}}
\end{align*}
For the sake of clarity, let's separate the terms of $h$ depending on their order in $x$: $h(x) = -\frac{1}{2}\left(h_2(x) + h_1(x) + h_0\right)$ where:
\begin{align*}
    h_2(x) &= \frac{1}{\sigma^2}(\|x\|^2 - x^\top (\sigma^2\bA + \Id)^{-1} x \\
    h_1(x) &= -2x^\top (\sigma^2\bA + \Id)^{-1}\bA\ba \\
    h_0 &= \ba\bA\ba - \sigma^2\ba^\top\bA(\sigma^2\bA + \Id)^{-1}\bA\ba
\end{align*}
Finally, we can factorize $h_2$ and $h_0$ using Woodbury's matrix identity which holds even for a singular matrix $\bA$:
\begin{equation}
    \label{eq:woodbury}
    \tag{Woodbury's identity}
   (\sigma^2\bA + \Id)^{-1}= \Id - \sigma^2(\sigma^2\bA + \Id)^{-1}\bA
\end{equation}
Let $\bJ = (\sigma^2\bA + \Id)^{-1}\bA$.
\begin{align*}
    h_2(x) &= \frac{1}{\sigma^2}(\|x\|^2 - x^\top (\Id - \sigma^2(\sigma^2\bA + \Id)^{-1}\bA)x \\
    &= x^\top (\sigma^2\bA + \Id)^{-1}\bA x\\
    &= x^\top \bJ x\\
    h_1(x) &= -2x^\top \bJ \ba \\
    h_0 &= \ba\bA\ba - \sigma^2\ba^\top\bA(\sigma^2\bA + \Id)^{-1}\bA\ba\\
    &=\ba^\top \bA(\Id - \sigma^2(\sigma^2\bA + \Id)^{-1}\bA) \ba\\
    &= \ba^\top \bA(\sigma^2\bA + \Id)^{-1}\ba \\
    &= \ba^\top (\sigma^2\bA + \Id)^{-1}\bA\ba\\
    &=\ba^\top\bJ\ba
\end{align*}
Therefore, $h(x) = -\frac{1}{2}\left(x^\top\bJ x -2x^\top\bJ\ba + \ba^\top\bJ\ba \right) = -\frac{1}{2}(x-\ba)^\top\bJ(x-\ba) = \Qcal(\ba, \bJ)(x)$.
\end{proof}
\begin{lemma}
\label{lem:convolution-2}[Gaussian convolution of generic quadratic forms]
Let $\bA \in S_d$ and $\ba \in \RR^d$ and $\sigma > 0$ such that $\sigma^2 \bA + \Id \succ 0$.
Let $Q_\alpha(\bx) = -\frac{1}{2}(\bx^\top \bA \bx - 2\bx^\top\ba)$. Then the convolution of $e^{\Qcal_\alpha}$ by the Gaussian kernel $\Ncal(0, \frac{\Id}{\sigma^2})$  is given by:
\begin{equation}
    \label{eq:convolution-2}
     \Ncal(0, \frac{\Id}{\sigma^2}) \star \exp\left(\Qcal_\alpha\right) \defeq \int_{\RR^d}  \frac{1}{(2\pi\sigma^2)^{\frac{n}{2}}}\exp\left(-\frac{1}{2\sigma^2}\|.-y\|^2 + \Qcal_\alpha(y)\right)\dd y = c_{\alpha} \exp(\Qcal(\bG\ba, \bG\bA))
\end{equation}
where:
\begin{align*}
    \bG &= (\sigma^2\bA + \Id)^{-1}\\
    c_\alpha &= \frac{e^{\frac{\sigma^2\ba^\top\bG\ba}{2}}}{\sqrt{\det(\sigma^2 \bA + \Id)}}
\end{align*}
\end{lemma}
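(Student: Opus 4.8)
The plan is to follow the strategy of the proof of \Cref{lem:convolution}: fix $x \in \RR^d$, complete the square in the integration variable $y$ inside the exponent, integrate out the resulting Gaussian, and then simplify the surviving $x$-dependent exponent using the elementary relation between $\bG$ and $\bA$.

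First I would expand $-\frac{1}{2\sigma^2}\|x-y\|^2 + \Qcal_\alpha(y)$ and gather the terms by their degree in $y$: the quadratic-in-$y$ part has matrix $\bA + \tfrac{1}{\sigma^2}\Id = \tfrac{1}{\sigma^2}(\sigma^2\bA+\Id) = \tfrac{1}{\sigma^2}\bG^{-1}$, which is positive definite exactly under the hypothesis $\sigma^2\bA + \Id \succ 0$ (this is what makes the convolution integral finite), the linear-in-$y$ part is $y^\top(\tfrac{x}{\sigma^2}+\ba)$, and the only remaining term is $-\tfrac{1}{2\sigma^2}\|x\|^2$. Completing the square in $y$ and integrating against the Gaussian-kernel normalization produces the constant $\det(\sigma^2\bA+\Id)^{-1/2}$ and leaves the $x$-dependent exponent
\[
\tfrac{\sigma^2}{2}\Big(\tfrac{x}{\sigma^2}+\ba\Big)^\top \bG \Big(\tfrac{x}{\sigma^2}+\ba\Big) - \tfrac{1}{2\sigma^2}\|x\|^2,
\]
where I used $(\tfrac{1}{\sigma^2}\bG^{-1})^{-1} = \sigma^2\bG$.

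Next I would expand this into its $x$-quadratic, $x$-linear and $x$-independent parts, namely $\tfrac{1}{2\sigma^2}x^\top(\bG-\Id)x + x^\top\bG\ba + \tfrac{\sigma^2}{2}\ba^\top\bG\ba$. The key step is the identity $\bG - \Id = -\sigma^2\bG\bA$, equivalently $\bG\bA = \tfrac{1}{\sigma^2}(\Id-\bG)$, which follows immediately from $\bG(\sigma^2\bA+\Id)=\Id$ (the same manipulation as the Woodbury identity used in \Cref{lem:convolution}); in particular it shows $\bG\bA$ is symmetric, so $\Qcal(\bG\ba,\bG\bA)$ is well-defined. Substituting, and using $\bG^\top=\bG$, the exponent becomes $-\tfrac12 x^\top\bG\bA\,x + (\bG\ba)^\top x + \tfrac{\sigma^2}{2}\ba^\top\bG\ba = \Qcal(\bG\ba,\bG\bA)(x) + \tfrac{\sigma^2}{2}\ba^\top\bG\ba$. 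Pulling the $x$-independent factor $e^{\sigma^2\ba^\top\bG\ba/2}$ out of the exponential and combining it with the determinant constant yields $c_\alpha\exp(\Qcal(\bG\ba,\bG\bA))$, which is the claim.

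I do not expect a genuine obstacle here: the argument is a routine Gaussian integral followed by matrix algebra, and the only point needing care is the bookkeeping that separates $x$-independent constants from the $x$-quadratic and $x$-linear terms, together with the use of $\bG-\Id=-\sigma^2\bG\bA$ to absorb the spurious $-\tfrac{1}{2\sigma^2}\|x\|^2$ term. (Alternatively, when $\bA$ is invertible one may write $\Qcal_\alpha(x) = -\tfrac12(x-\bA^{-1}\ba)^\top\bA(x-\bA^{-1}\ba)+\tfrac12\ba^\top\bA^{-1}\ba$ and deduce the result from \Cref{lem:convolution}, but the direct computation avoids any nondegeneracy assumption on $\bA$.)
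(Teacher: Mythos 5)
Your proof is correct and follows essentially the same route as the paper's: complete the square in $y$, perform the Gaussian integral to extract $\det(\sigma^2\bA+\Id)^{-1/2}$, and then use the Woodbury-type identity $\bG-\Id=-\sigma^2\bG\bA$ (the paper invokes it under the name ``Woodbury's identity'', you derive it directly from $\bG(\sigma^2\bA+\Id)=\Id$) to regroup the surviving $x$-dependent exponent into $\Qcal(\bG\ba,\bG\bA)(x)+\tfrac{\sigma^2}{2}\ba^\top\bG\ba$. The only cosmetic difference is that the paper routes the initial regrouping through the auxiliary Lemma~\ref{lem:sum-forms}, while you complete the square inline; your remark that $\bG\bA$ is symmetric (hence $\Qcal(\bG\ba,\bG\bA)$ is well-defined) is a small clarification the paper leaves implicit.
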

\begin{proof}
Using \Cref{lem:sum-forms} one can write for any $x \in \RR^d$ considered fixed:
\begin{align*}
    -\frac{1}{2\sigma^2}\|x-y\|^2 + \Qcal_\alpha(y) &= \Qcal(x, \frac{\Id}{\sigma^2})(y) + \Qcal(\ba, \bA)(y) \\
    &= \Qcal(\ba + \frac{x}{\sigma^2}, \bA + \frac{\Id}{\sigma^2})(y) - \frac{1}{2\sigma^2}\|x\|^2 \\
    &= \Qcal f((\sigma\ba + \frac{x}{\sigma^2}, \bA + \frac{\Id}{\sigma^2})(y) + h(x)
\end{align*}
with $h(x)= -\frac{1}{2}\left(\frac{1}{\sigma^2}\|x\|^2 - \frac{1}{\sigma^2}(\sigma^2 \ba + x)^\top(\sigma^2\bA + \Id)^{-1}(\sigma^2\ba + x)\right)$.
Therefore, the convolution integral is finite if and only if $\bA + \frac{\Id}{\sigma^2} \succ 0$ in which case we get the integral of a Gaussian density:
\begin{align*}
   \frac{1}{(2\pi\sigma^2)^{\frac{n}{2}}} \int_{\RR^d} \exp\left(\Qcal f(\ba + \frac{x}{\sigma^2}, \bA + \frac{\Id}{\sigma^2})(y) + h(x)\right)\dd (y)  &= \sqrt{\frac{\det(2\pi (\bA + \frac{\Id}{\sigma^2})^{-1})}{(2\pi\sigma^2)^{n}}} e^{h(x)}\\
   &= \frac{e^{h(x)}}{\sqrt{\det(\sigma^2\bA + \Id)}}
\end{align*}
For the sake of clarity, let's separate the terms of $h$ depending on their order in $x$: $h(x) = -\frac{1}{2}\left(h_2(x) + h_1(x) + h_0\right)$ where:
\begin{align*}
    h_2(x) &= \frac{1}{\sigma^2}(\|x\|^2 - x^\top (\sigma^2\bA + \Id)^{-1} x \\
    h_1(x) &= -2x^\top (\sigma^2\bA + \Id)^{-1}\ba \\
    h_0 &= - \sigma^2\ba^\top(\sigma^2\bA + \Id)^{-1}\ba
\end{align*}
Finally, we can factorize $h_2$ and $h_0$ using Woodbury's matrix identity which holds even for a singular matrix $\bA$:
\begin{equation}
    \label{eq:woodbury}
    \tag{Woodbury's identity}
   (\sigma^2\bA + \Id)^{-1}= \Id - \sigma^2(\sigma^2\bA + \Id)^{-1}\bA
\end{equation}
Let $\bG = (\sigma^2\bA + \Id)^{-1}$.
\begin{align*}
    h_2(x) &= \frac{1}{\sigma^2}(\|x\|^2 - x^\top (\Id - \sigma^2(\sigma^2\bA + \Id)^{-1}\bA)x \\
    &= x^\top (\sigma^2\bA + \Id)^{-1}\bA x\\
    &= x^\top \bG\bA x\\
    h_1(x) &= -2x^\top \bG \ba \\
    h_0 &=  - \sigma^2\ba^\top(\sigma^2\bA + \Id)^{-1}\ba\\
    &=-\sigma^2\ba^\top \bG \ba
\end{align*}
Therefore, $h(x) = -\frac{1}{2}\left(x^\top\bG\bA x -2x^\top\bG\ba  -\sigma^2\ba^\top \bG \ba \right) = \Qcal(\bG\ba, \bG\bA)(x) + \frac{\sigma^2\ba^\top\bG\ba}{2}$.
\end{proof}


\subsection{Proof of theorem \ref{thm:unbalanced}}
In the balanced case, we showed that Sinkhorn's transform is stable for quadratic potentials and that the resulting sequence is a contraction. Similarly, the following proposition shows that the unbalanced Sinkhorn transform is stable for quadratic potentials.
M
\begin{proposition}
\label{prop:sinkhorn-transform-unbalanced}
Let $\alpha$ be an unbalanced Gaussians given by$ m_\alpha \Ncal(\ba, \bA)$. Let $\tau = \frac{\gamma}{2\sigma^2 + \gamma}$. Define the unbalanced Sinkhorn transform $T: \RR^{\RR^d} \to \RR^{\RR^d}$: 
\begin{equation}
    T_\alpha(h)(x) \defeq - \tau\log\int_{\RR^d} e^{\tfrac{-\|x- y\|^2}{2\sigma^2} + h(y)} \dd\alpha(y)
\end{equation}
Let $\bU \in \Scal_d$, $\bu \in \RR^d$ and $m_u > 0$.
If $h = \log(m_u) + \Qcal(\bu, \bU)$ i.e  $h(x) = \log(m_u) - \frac{1}{2} (x^\top \bU x -  2x^\top\bu)$, then $T_\alpha(h)$ is well defined if and only if  $\bF \defeq \sigma^2\bU + \sigma^2\bA^{-1} + \Id \succ 0$, in which case  $T_\alpha(h)= \Qcal(\bv, \bV) + \log(m_v)$ with the identified parameters:
\begin{align}
\bV &= \tau\frac{1}{\sigma^2}(\bF^{-1} - \Id)\\
\bv &= -\tau \bF^{-1}(\bA^{-1}\ba + \bu)\\
m_v &= \left(\frac{\sqrt{\det(\bA)\det(\bF)}}{m_u m_\alpha e^{\frac{q_{u, \alpha}}{2}}\sigma^{2d}}\right)^\tau
\end{align}
where $q_{u, \alpha} = \frac{\sigma^2}{\tau^2}\bv^\top\bF\bv - \ba^\top\bA^{-1}\ba$.
\end{proposition}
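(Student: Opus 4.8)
The plan is to reduce the computation of $T_\alpha(h)$ to the single Gaussian-convolution identity of \Cref{lem:convolution-2}, in direct analogy with how \Cref{prop:sinkhorn-transform} was obtained from \Cref{lem:convolution} in the balanced case; the only extra work is to carry along the multiplicative mass $m_u$ and the affine term $\bu$ that are now present. First I would expand $\dd\alpha(y) = \frac{m_\alpha}{(2\pi)^{d/2}\sqrt{\det\bA}}\,\exp\!\bigl(-\frac12(y-\ba)^\top\bA^{-1}(y-\ba)\bigr)\dd y$, rewrite $-\frac12(y-\ba)^\top\bA^{-1}(y-\ba) = \Qcal(\bA^{-1}\ba,\bA^{-1})(y) - \frac12\ba^\top\bA^{-1}\ba$, and add the potential $h(y) = \log m_u + \Qcal(\bu,\bU)(y)$. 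Since generic quadratic forms add parameter-wise, $\Qcal(\bu,\bU)(y) + \Qcal(\bA^{-1}\ba,\bA^{-1})(y) = \Qcal(\bu+\bA^{-1}\ba,\ \bU+\bA^{-1})(y)$, so the $y$-dependent part of the exponent inside the integral is exactly $-\frac{\|x-y\|^2}{2\sigma^2} + \Qcal(\bu+\bA^{-1}\ba,\ \bU+\bA^{-1})(y)$, with $m_\alpha$, $m_u$, $(2\pi)^{-d/2}$, $(\det\bA)^{-1/2}$ and $e^{-\ba^\top\bA^{-1}\ba/2}$ being $x$-independent factors pulled out of the integral.

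Second, I would recognize the remaining integral $\int_{\RR^d} e^{-\|x-y\|^2/(2\sigma^2) + \Qcal(\bu+\bA^{-1}\ba,\ \bU+\bA^{-1})(y)}\dd y$ as precisely the object computed by \Cref{lem:convolution-2}, up to the kernel normalization $(2\pi\sigma^2)^{d/2}$, applied with its matrix argument set to $\bU+\bA^{-1}$ and its vector argument set to $\bu+\bA^{-1}\ba$. The integrability hypothesis of that lemma reads $\sigma^2(\bU+\bA^{-1}) + \Id \succ 0$, which is exactly $\bF \succ 0$; conversely, if $\bF$ is not positive definite the quadratic part $-\frac12 y^\top(\frac{\Id}{\sigma^2}+\bU+\bA^{-1})y$ of the exponent fails to be negative definite and the integral diverges, which gives the claimed equivalence. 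Applying the lemma, the integral equals $(2\pi\sigma^2)^{d/2}\,\frac{\exp(\tfrac{\sigma^2}{2}\ba'^\top\bF^{-1}\ba')}{\sqrt{\det\bF}}\,\exp\!\bigl(\Qcal(\bF^{-1}\ba',\ \bF^{-1}(\bU+\bA^{-1}))(x)\bigr)$ with $\ba' := \bu+\bA^{-1}\ba$, and then I would use $\sigma^2(\bU+\bA^{-1}) = \bF-\Id$ to simplify $\bF^{-1}(\bU+\bA^{-1}) = \frac{1}{\sigma^2}(\Id-\bF^{-1})$.

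Third, I would take $-\tau\log$ of the whole expression. The quadratic part contributes $-\tau\,\Qcal\bigl(\bF^{-1}\ba',\ \frac{1}{\sigma^2}(\Id-\bF^{-1})\bigr)(x) = \Qcal\bigl(-\tau\bF^{-1}(\bA^{-1}\ba+\bu),\ \frac{\tau}{\sigma^2}(\bF^{-1}-\Id)\bigr)(x)$, which reads off $\bV = \frac{\tau}{\sigma^2}(\bF^{-1}-\Id)$ and $\bv = -\tau\bF^{-1}(\bA^{-1}\ba+\bu)$ as claimed; in particular $T_\alpha(h)$ is again a quadratic potential up to an additive constant, so the transform is stable on this class. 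The constant part yields $\log m_v$ as minus $\tau$ times the collected $x$-independent scalars, namely $\log m_\alpha + \log m_u$, a power of $\sigma$, $-\frac12\log\det\bA - \frac12\log\det\bF$, and the exponential term $\frac12(\sigma^2\ba'^\top\bF^{-1}\ba' - \ba^\top\bA^{-1}\ba)$; using $\bv^\top\bF\bv = \tau^2\,\ba'^\top\bF^{-1}\ba'$ one identifies the latter with $\frac12 q_{u,\alpha}$ for $q_{u,\alpha} = \frac{\sigma^2}{\tau^2}\bv^\top\bF\bv - \ba^\top\bA^{-1}\ba$, producing the stated closed form for $m_v$.

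The main obstacle is purely the bookkeeping in this last step: tracking every $x$-independent scalar (the normalizations of $\alpha$ and of the heat kernel, the two determinants, the power of $\sigma$, and the two quadratic-in-means terms) and repackaging the exponential factors into the single quantity $q_{u,\alpha}$. As in the proof of \Cref{lem:convolution-2}, the clean re-expression relies on Woodbury's identity applied to $(\sigma^2(\bU+\bA^{-1})+\Id)^{-1}$, and sign or $\sigma$-power slips are the only real danger; everything else follows mechanically once the reduction in the first two steps is in place.
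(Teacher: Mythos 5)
Your proposal matches the paper's proof: both reduce the unbalanced Sinkhorn transform to the generic Gaussian-convolution identity of \Cref{lem:convolution-2} after absorbing $\dd\alpha$ and $h$ into a single quadratic form $\Qcal(\bu+\bA^{-1}\ba,\ \bU+\bA^{-1})$, check integrability via $\bF\succ0$, and then read off $\bV$, $\bv$, $m_v$ by applying $-\tau\log$ and repackaging $\ba' = \bu+\bA^{-1}\ba = -\frac{1}{\tau}\bF\bv$ into $q_{u,\alpha}$. Correct, and essentially the same route.
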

\begin{proof}
The exponent inside the integral can be written as:
\begin{align*}
    e^{\tfrac{-\|x- y\|^2}{2\sigma^2} + h(y)} \dd\alpha(y) &\propto  e^{\tfrac{-\|x- y\|^2}{2\sigma^2} - \frac{1}{2}(y^\top\bX y - y^\top \bA^{-1}y)} \dd y \\
    &\propto e^{-\tfrac{1}{2}(y^\top(\frac{\Id}{\sigma^2} + \bX +\bA^{-1})y) + \frac{x^\top y}{\sigma^2}} \dd y
\end{align*}
which is integrable if and only if $\bU + \bA^{-1} + \frac{1}{\sigma^2}\Id 
\succ 0 \Leftrightarrow \bF \succ 0$.
Moreover, up to a multiplicative factor, the exponentiated Sinkhorn transform is equivalent to a Gaussian convolution of an exponentiated quadratic form. Lemma \ref{lem:convolution-2} applies:  
\begin{align*}
    e^{-T_\alpha(h)} &= \int_{\RR^d} e^{\tfrac{-\|x- y\|^2}{2\sigma^2} + f(y)}\dd \alpha(y) \\
&= m_u m_\alpha \frac{\exp(-\frac{1}{2} \ba^\top\bA^{-1}\ba)}{\sqrt{\det(2\pi\bA)}}   \int_{\RR^d} e^{\tfrac{-\|x- y\|^2}{2\sigma^2} + \Qcal(\bu, \bU)(y) + \Qcal(\bA^{-1}\ba, \bA^{-1})(y)} \dd y \\
    &= m_u m_\alpha \frac{\exp(-\frac{1}{2} \ba^\top\bA^{-1}\ba)}{\sqrt{\det(2\pi\bA)}} \sqrt{(2\pi \sigma^2)^{2d}} \exp\left(\Ncal(\sigma^2\Id)\right) \star \exp\left(\Qcal(\bu + \bA^{-1}\ba, \bU + \bA^{-1})\right)\\
     &=m_u m_\alpha \frac{\sigma^{2d}\exp(-\frac{1}{2} \ba^\top\bA^{-1}\ba)}{\sqrt{\det(\bA)}} \exp\left(\Ncal(\sigma^2\Id)\right) \star \exp\left(\Qcal(\bu + \bA^{-1}\ba, \bU + \bA^{-1})\right)\\
     &= m_u m_\alpha \frac{\sigma^{2d}\exp(-\frac{1}{2} \ba^\top\bA^{-1}\ba) }{\sqrt{\det(\bA)}} c_\alpha\exp\left(\Qcal(\bF^{-1}(\bu + \bA^{1}\ba), \bF^{-1}(\bU + \bA^{-1})\right).\\
     &=m_u m_\alpha \frac{\sigma^{2d}\exp(-\frac{1}{2} \ba^\top\bA^{-1}\ba) }{\sqrt{\det(\bA)}} c_\alpha\exp\left(\Qcal(\bF^{-1}(\bu + \bA^{1}\ba),\frac{1}{\sigma^2}\bF^{-1}(\bF - \Id)\right).\\
&=m_u m_\alpha \frac{\sigma^{2d}\exp(-\frac{1}{2} \ba^\top\bA^{-1}\ba) }{\sqrt{\det(\bA)}} c_\alpha\exp\left(\Qcal(\bF^{-1}(\bu + \bA^{1}\ba),\frac{1}{\sigma^2}(\Id - \bF^{-1})\right).
\end{align*}
where $c_\alpha = \frac{\exp(\frac{1}{2}\sigma^2 (\bu + \bA^{-1}\ba)^\top \bF^{-1}(\bu + \bA^{-1}\ba))}{\sqrt{\det(\bF)}}$.

Therefore, by applying $-\tau \log$ we can identify $\bV$ and $\bv$. Substituting $\bu + \bA^{-1}\ba$ by $-\frac{1}{\tau}\bF\bv$ leads to the equation of $m_v$.
\end{proof}
Unlike the balanced case, the unbalanced Sinkhorn iterations require 2 more parameters ($\bv$ and $m_v$) with tangled updates. Proving the convergence of the resulting algorithm is more challenging. Instead, we directly solve the optimality conditions and show that a pair of quadratic potentials verifies \eqref{eq:opt-conditions-unbalanced}.

\begin{proposition}
\label{prop:optim-unbalanced-params}
The pair of quadratic forms $(f, g)$ of \eqref{eq:potentials-as-quads} verifies the optimality conditions \eqref{eq:opt-conditions-unbalanced} if and only if:
\begin{align}
    \label{eq:necessary-conditions-unbalanced}
    \begin{split}
   \bF\defeq\sigma^2 \bA^{-1} +\sigma^2 \bU + \Id \succ 0 \\
      \bG\defeq\sigma^2 \bB^{-1} +\sigma^2 \bV +  \Id \succ 0,
      \end{split}
\end{align}
\begin{minipage}{0.49\linewidth}
\begin{align*}
    \begin{split}
        &m_v \left(\frac{m_u m_\alpha e^{\frac{q_{u, \alpha}}{2}}\sigma^d}{\sqrt{\det(\bA)\det(\bF)}}\right)^\tau = 1 \\
    &\bv = - \tau\bF^{-1}(\bA^{-1}\ba + \bu) \\
    \bG &= \tau \bF^{-1} + \sigma^2 \bB^{-1} + (1-\tau)\Id\\
    q_{u, \alpha} &= \frac{\sigma^2}{\tau^2}\bv^\top\bF\bv - \ba^\top\bA^{-1}\ba
    \end{split} 
\end{align*}
\end{minipage}
\begin{minipage}{0.49\linewidth}
\begin{align}
    \label{eq:optim-unbalanced-params}
    \begin{split}
        &m_u \left(\frac{m_v m_\beta e^{\frac{q_{v, \beta}}{2}}\sigma^d}{\sqrt{\det(\bB)\det(\bG)}}\right)^\tau = 1 \\
    &\bu = - \tau\bG^{-1}(\bB^{-1}\bb + \bv) \\
    \bF &= \tau \bG^{-1} + \sigma^2 \bA^{-1} + (1-\tau)\Id \\
    q_{v, \beta} &= \frac{\sigma^2}{\tau^2}\bu^\top\bG\bu -\bb^\top\bB^{-1}\bb
    \end{split} 
\end{align}
\end{minipage}
\end{proposition}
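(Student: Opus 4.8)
The plan is to read the two optimality conditions in \eqref{eq:opt-conditions-unbalanced} as the statement that $(f,g)$ is a fixed point of the unbalanced Sinkhorn transform of \Cref{prop:sinkhorn-transform-unbalanced}, and then to feed the quadratic ansatz \eqref{eq:potentials-as-quads} into that proposition and identify coefficients. Dividing both sides of \eqref{eq:opt-conditions-unbalanced} by $2\sigma^2$, the first condition reads exactly $\tfrac{f}{2\sigma^2} = T_\beta\big(\tfrac{g}{2\sigma^2}\big)$ and the second reads $\tfrac{g}{2\sigma^2} = T_\alpha\big(\tfrac{f}{2\sigma^2}\big)$, where $T_\alpha,T_\beta$ are the transforms attached to $\alpha = m_\alpha\Ncal(\ba,\bA)$ and $\beta = m_\beta\Ncal(\bb,\bB)$. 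Since by \eqref{eq:potentials-as-quads} $\tfrac{f}{2\sigma^2} = \log(m_u) + \Qcal(\bu,\bU)$ and $\tfrac{g}{2\sigma^2} = \log(m_v) + \Qcal(\bv,\bV)$ are quadratic forms shifted by the log-mass constants, \Cref{prop:sinkhorn-transform-unbalanced} applies verbatim.

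First I would apply \Cref{prop:sinkhorn-transform-unbalanced} to $h = \tfrac{g}{2\sigma^2}$ with base measure $\beta$. It gives that $T_\beta(h)$ is well defined \emph{iff} $\bG \defeq \sigma^2\bV + \sigma^2\bB^{-1} + \Id \succ 0$, and in that case $T_\beta(h)$ is again a quadratic form plus a constant, with Hessian parameter $\tfrac{\tau}{\sigma^2}(\bG^{-1} - \Id)$, linear parameter $-\tau\bG^{-1}(\bB^{-1}\bb + \bv)$, and a mass constant given by the mass formula of \Cref{prop:sinkhorn-transform-unbalanced} after the substitution $(\alpha,m_\alpha,\bA,\bF,m_u,\bu)\mapsto(\beta,m_\beta,\bB,\bG,m_v,\bv)$. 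A quadratic form plus a constant being uniquely determined by its Hessian, its linear part, and its value at $0$, equating $\tfrac{f}{2\sigma^2} = \log(m_u) + \Qcal(\bu,\bU)$ with $T_\beta(h)$ is equivalent to three identities: (a) $\sigma^2\bU = \tau\bG^{-1} - \tau\Id$, which upon adding $\sigma^2\bA^{-1} + \Id$ to both sides becomes $\bF = \tau\bG^{-1} + \sigma^2\bA^{-1} + (1-\tau)\Id$; (b) $\bu = -\tau\bG^{-1}(\bB^{-1}\bb + \bv)$; and (c) the mass equation, obtained by exponentiating, $m_u\big(m_v m_\beta e^{q_{v,\beta}/2}\sigma^{d}/\sqrt{\det(\bB)\det(\bG)}\big)^\tau = 1$ with $q_{v,\beta} = \tfrac{\sigma^2}{\tau^2}\bu^\top\bG\bu - \bb^\top\bB^{-1}\bb$ (using $\bu = \bu'$ to rewrite the exponent). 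This is precisely the right-hand column of \eqref{eq:optim-unbalanced-params} together with $\bG \succ 0$. Running the symmetric argument on the second optimality condition — applying \Cref{prop:sinkhorn-transform-unbalanced} to $h = \tfrac{f}{2\sigma^2}$ with base measure $\alpha$ — yields the positivity $\bF \succ 0$ and the left-hand column of \eqref{eq:optim-unbalanced-params}, so that together one obtains \eqref{eq:necessary-conditions-unbalanced}–\eqref{eq:optim-unbalanced-params}.

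For the equivalence itself: in the ``only if'' direction, for $\tfrac{f}{2\sigma^2}$ to equal $T_\beta(\tfrac{g}{2\sigma^2})$ the right-hand side must be finite, which by \Cref{prop:sinkhorn-transform-unbalanced} forces $\bG \succ 0$, and then matching coefficients as above gives the stated equations (and likewise $\bF\succ0$ from the other condition); in the ``if'' direction one simply plugs the identities back into the closed forms of \Cref{prop:sinkhorn-transform-unbalanced} and checks that $T_\beta(\tfrac{g}{2\sigma^2}) = \tfrac{f}{2\sigma^2}$ and $T_\alpha(\tfrac{f}{2\sigma^2}) = \tfrac{g}{2\sigma^2}$. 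The step I expect to be the main obstacle is the bookkeeping of the multiplicative constants in the mass equation: one must track the powers of $2\pi$ and $\sigma$, the normalizing factors $\sqrt{\det(2\pi\bA)}$, $\sqrt{\det(2\pi\bB)}$ of $\alpha$ and $\beta$, and the Gaussian-convolution constant $c_\alpha$ (resp. $c_\beta$) of \Cref{lem:convolution-2} with its $e^{q/2}$ factor, and then verify that after substituting the linear coefficient (rewriting $\bB^{-1}\bb + \bv = -\tfrac1\tau\bG\bu$) the quadratic exponent collapses exactly to the $q_{v,\beta}$ (resp. $q_{u,\alpha}$) appearing in the statement. Everything else is a routine identification of polynomial coefficients.
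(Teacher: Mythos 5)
Your proposal is correct and takes essentially the same approach as the paper: both read the optimality conditions \eqref{eq:opt-conditions-unbalanced} as $\tfrac{f}{2\sigma^2}=T_\beta(\tfrac{g}{2\sigma^2})$ and $\tfrac{g}{2\sigma^2}=T_\alpha(\tfrac{f}{2\sigma^2})$, apply \Cref{prop:sinkhorn-transform-unbalanced} with the quadratic ansatz, and identify the Hessian, linear and constant coefficients, then substitute the definition of $\bF,\bG$ to turn the identity on $\bU,\bV$ into the fixed-point equations. The paper's proof is extremely terse (``follow immediately''), whereas you lay out the coefficient matching and the positivity/well-definedness bookkeeping that makes the ``if and only if'' explicit.
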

\begin{proof}
The equations on $m_u, m_v, \bu, \bv$ follow immediately from \Cref{prop:sinkhorn-transform-unbalanced}. Using the definition of $\bF$ and $\bG$, substituting $\bU$ and $\bF$ leads to the equations in $\bF$ and $\bG$
\end{proof}

We now turn to solve the system \eqref{eq:optim-unbalanced-params}. Notice that in general, the dual potentials can only be identified up to a an additive constant. Indeed, if a pair $(f, g)$ is optimal, then $(f + K, g - K)$ is also optimal for any $K\in\RR$ (the transportation plan does not change). Thus, at optimality, it is sufficient to obtain the product $m_u m_v$. We start by identifying $(\bF, \bG)$ then $(\bu, \bv)$ and finally $m_u m_v$.
%
%
\paragraph{Identifying $\bF$ and $\bG$.}
The equations in $\bF$ and $\bG$ can shown to be equivalent to those of the balanced case up to some change of variables. Let $\lambda = \frac{1-\tau}{\sigma^2}$
\begin{align*}
    &\left\{\begin{array}{ll}
    \bF &= \tau \bG^{-1} + \sigma^2 \bA^{-1} + (1-\tau)\Id \\
    \bG &= \tau \bF^{-1} + \sigma^2 \bB^{-1} + (1-\tau)\Id
    \end{array}\right. \\
    & \Leftrightarrow   \left\{\begin{array}{ll}
    \bF &= \left(\frac{\bG}{\tau}\right)^{-1} + \frac{\sigma^2}{\tau} \tau(\bA^{-1} +\frac{1}{\lambda}\Id) \\
    \frac{\bG}{\tau} &= \bF^{-1} + \frac{\sigma^2}{\tau} (\bB^{-1} +  \frac{1}{\lambda}\Id)
    \end{array}\right.\\
    & \Leftrightarrow   \left\{\begin{array}{ll}
    \bF &= \widetilde{\bG}^{-1} + \sigma^2 (\frac{\widetilde{\bA}}{\tau})^{-1} \\
    \widetilde{\bG} &= \bF^{-1} + \sigma^2\widetilde{\bB}^{-1}
    \end{array}\right.
\end{align*}
which correspond to the balanced OT fixed point equations \eqref{eq:optim-fg} associated with the pair $(\frac{\widetilde{\bA}}{\tau}, \widetilde{\bB})$ with the change of variables:
\begin{align}
    \widetilde{\bG} &\defeq \frac{\bG}{\tau} \\
    \widetilde{\bA} &\defeq \tau(\bA^{-1} + \frac{1}{\lambda}\Id)^{-1} \\
    \widetilde{\bB} &\defeq \tau(\bB^{-1} +  \frac{1}{\lambda}\Id)^{-1}
\end{align}
Notice that since $0 < \tau < 1$, $\widetilde{\bA}$ and $\widetilde{\bB}$ are well-defined and positive definite. Therefore, \Cref{prop:closed_form_matrix} applies and we can write in closed form:
\begin{align}
\label{eq:unbalanced-C}
    \begin{split}
    \bC\defeq\widetilde{\bA}\widetilde{\bG}^{-1} &= \left(\frac{1}{\tau}\widetilde{\bA}\widetilde{\bB} + \frac{\sigma^4}{4}\Id\right)^{\frac{1}{2}} - \frac{\sigma^2}{2}\Id \\
    &= \widetilde{\bA}\rt\left(\frac{1}{\tau}\widetilde{\bA}\rt\widetilde{\bB}\widetilde{\bA}\rt + \frac{\sigma^4}{4}\Id\right)^{\frac{1}{2}}\widetilde{\bA}\mrt - \frac{\sigma^2}{2}\Id
    \end{split}
\end{align}
And similarly by symmetry:
\begin{equation}
    \widetilde{\bB}\bF^{-1} = \left(\frac{1}{\tau}\widetilde{\bB}\widetilde{\bA} + \frac{\sigma^4}{4}\Id\right)^{\frac{1}{2}} - \frac{\sigma^2}{2}\Id = \bC^\top
\end{equation}
Therefore we obtain $\bF$ and $\bG$ in closed form:
\begin{align}
    \label{eq:fg-closed-form-unbalanced}
    \bF &= \widetilde{\bB}\bC^{-1} \\
    \bG &= \bC^{-1}\widetilde{\bA}
\end{align}
Finally, to obtain the formulas of $\widetilde{\bA}$ and $\widetilde{\bB}$ of \Cref{thm:unbalanced}, use Woodburry's identity to write:
\begin{align*}
    \widetilde{\bB} &= \tau\lambda(\Id - \lambda (\bB + \lambda\Id)^{-1}) \\
     &= \frac{\gamma}{\gamma + 2\sigma^2}\frac{2\sigma^2 + \gamma}{2} (\Id - \lambda(\bB + \lambda\Id)^{-1})\\
     &= \frac{\gamma}{2}(\Id - \lambda(\bB + \lambda\Id)^{-1})
\end{align*}
the same applies for $ \widetilde{\bA}$.
%
\paragraph{Identifying $\bu$ and $\bv$.}
Combining the equations in $\bu$ and $\bv$ leads to:
\begin{align*}
    &\bv = - \tau\bF^{-1}(\bA^{-1}\ba + \tau\bu) \\
    &\Leftrightarrow \bF\bv= -\tau \bA^{-1}\ba - \tau\bu \\
    &\Leftrightarrow \bF\bv = -\tau\bA^{-1}\ba + \tau^2\bG^{-1}(\bB^{-1}\bb + \bv) \\
    &\Leftrightarrow \bG\bF\bv = -\tau\bG\bA^{-1}\ba + \tau^2(\bB^{-1}\bb + \bv)\\
    &\Leftrightarrow (\bG\bF - \tau^2\Id)\bv = -\tau\bG\bA^{-1}\ba +\tau^2\bB^{-1}\bb
\end{align*}
Similarly, $(\bF\bG - \tau^2\Id)\bu = -\tau\bF\bB^{-1}\bb +\tau^2\bA^{-1}\ba$. Moreover, since $0 <\tau < 1$, it holds$ (\bF - \tau^2\bG^{-1}) \succ (\bF - \tau\bG^{-1}) = \sigma^2 \widetilde{\bA}^{-1} \succ 0$. Therefore, $(\bF\bG - \tau^2\Id) = (\bF - \tau^2\bG^{-1}\Id)\bG$ is invertible. The same applies for $(\bG\bF - \tau^2\Id)$.

Finally, both equations can be vectorized:
\begin{equation}
    \label{eq:vectorized-uv}
    \begin{pmatrix}
    \bG\bF - \tau^2\Id & 0 \\
    0 & \bF\bG - \tau^2\Id
    \end{pmatrix}
    \begin{pmatrix}
    \bv \\ \bu
    \end{pmatrix} = 
    \begin{pmatrix}
    -\tau \bG &  \tau^2\Id \\
    \tau^2\Id & -\tau\bF
    \end{pmatrix}
    \begin{pmatrix}
    \bA^{-1} & 0 \\
    0 & \bB^{-1}
    \end{pmatrix}
    \begin{pmatrix}
    \ba \\
    \bb
    \end{pmatrix}
\end{equation}
\paragraph{Identifying $m_u m_v$.}
Now that $\bF, \bG, \bu$ and $\bv$ are given in closed form, $m_u m_v$ is obtained by taking the product of both equations:
\begin{align}
    \label{eq:mumv}
    (m_u m_v)^{\tau + 1} = \left(\frac{\sqrt{\det(\bA\bB)\det(\bF\bG)}}{\sigma^{2d} m_\alpha m_\beta}\right)^\tau \exp(-\frac{\tau}{2}(q_{u, \alpha} + q_{v, \beta}))
\end{align}

\paragraph{Transportation plan.}
Let $\omega \defeq \frac{m_\alpha m_\beta}{\sqrt{\det(4\pi^2 \bA\bB)}} m_u m_v e^{-\frac{1}{2}(\ba^\top\bA^{-1}\ba + \bb^\top\bB^{-1}\bb)}$.
At optimality, the transport plan $\pi$ is given by:
\begin{align*}
\frac{\dd\pi}{\dd x\dd y}(x, y) &= \exp\left(\frac{f(x) + g(y) - \|x - y\|^2}{2\sigma^2}\right) \frac{\dd\alpha}{\dd x}(x) \frac{\dd\beta}{\dd y}(y)\\
&= \omega \exp\left(\Qcal(\bA^{-1}\ba + \bu, \bA^{-1} + \bU)(x) - \frac{\|x - y\|^2}{2\sigma^2} + \Qcal(\bB^{-1}\bb + \bv, \bB^{-1} + \bV)(y)\right) \\
    &= \omega \exp\left(\Qcal(\bU + \bA^{-1})(x) + \Qcal(\bV + \bB^{-1})(y) + \Qcal(\begin{smallmatrix}\frac{\Id}{\sigma^2} & - \frac{\Id}{\sigma^2} \\ - \frac{\Id}{\sigma^2} & \frac{\Id}{\sigma^2} \end{smallmatrix})(x, y)\right)\\
    &= \omega\exp\left(\Qcal\left(\begin{pmatrix} \bA^{-1}\ba + \bu \\ \bB^{-1}\bb + \bv \end{pmatrix}, \begin{pmatrix}\bU + \bA^{-1} + \frac{\Id}{\sigma^2} & 0 \\ 0 & \bV + \bB^{-1} + \frac{\Id}{\sigma^2} \end{pmatrix}\right)(x, y)\right) \\
    &= \omega\exp\left(\Qcal\left(\begin{pmatrix} \bA^{-1}\ba + \bu \\ \bB^{-1}\bb + \bv \end{pmatrix}, \frac{1}{\sigma^2}\begin{pmatrix}\bF & -\Id \\ -\Id & \bG \end{pmatrix}\right)(x, y)\right) \\
&=\omega\exp\left( \Qcal(\mu, \Gamma)(x, y)\right)
\end{align*}
with $\mu \defeq \begin{pmatrix} \bA^{-1}\ba + \bu \\ \bB^{-1}\bb + \bv \end{pmatrix}$ and $\Gamma \defeq \begin{pmatrix}\frac{\bF}{\sigma^2} & - \frac{\Id}{\sigma^2}  \\ - \frac{\Id}{\sigma^2} &  \frac{\bG}{\sigma^2} \end{pmatrix}$. Let's show that $\Gamma \succ 0$.  
Since $\frac{\bG}{2\sigma^2} \succ 0$ , it is sufficient to show that Schur complement  $\frac{\bF}{\sigma^2} - \frac{1}{\sigma^2}\bG^{-1}  \succ 0$. 
On one hand, with 
\begin{align*}
    \frac{\bF - \bG^{-1}}{\sigma^2} &= \tau \widetilde{\bA}^{-1} - \frac{1}{\lambda} \bG^{-1}
\end{align*}
On the other hand, almost by definition $\widetilde{\bA} \prec \tau \lambda\Id$ and $\widetilde{\bB} \prec \tau\lambda\Id$. Thus for any $x\in\RR^d$:
\begin{equation*}
    x^\top \frac{\widetilde{\bA}\rt\widetilde{\bB}\widetilde{\bA}\rt}{\tau} x \leq  \lambda\|\widetilde{\bA}\rt x\|^2 = \lambda x^\top \widetilde{\bA} x \leq \tau\lambda^2 \|x\|^2,
\end{equation*}
which implies
\begin{equation*}
\left(\frac{\widetilde{\bA}\rt\widetilde{\bB}\widetilde{\bA}\rt}{\tau}  + \frac{\sigma^4}{4}\Id\right)\rt  \prec \sqrt{\tau\lambda^2 + \frac{\sigma^4}{4}}\Id = \frac{\lambda}{2}(\sqrt{4\tau + (1-\tau)^2})\Id =\frac{\lambda(1+\tau)}{2} \Id.
\end{equation*}
Therefore, using the second equality of \eqref{eq:unbalanced-C} and inverting \eqref{eq:fg-closed-form-unbalanced} to obtain $\bG^{-1}$:
\begin{align*}
    x^\top \bG^{-1} x&=  x^\top \widetilde{\bA}\mrt\left(\left(\frac{\widetilde{\bA}\rt\widetilde{\bB}\widetilde{\bA}\rt}{\tau}  + \frac{\sigma^4}{4}\Id\right)^{\frac{1}{2}} - \frac{\sigma^2}{2}\Id)\right)\widetilde{\bA}\mrt x \\ 
    &=  (\widetilde{\bA}\mrt x)^\top \left(\left(\frac{\widetilde{\bA}\rt\widetilde{\bB}\widetilde{\bA}\rt}{\tau}  + \frac{\sigma^4}{4}\Id\right)^{\frac{1}{2}} - \frac{\lambda(1-\tau)}{2}\Id)\right) (\widetilde{\bA}\mrt x) \\
    &\leq (\widetilde{\bA}\mrt x)^\top \left(\frac{\lambda(1+\tau)}{2}\Id - \frac{\lambda(1-\tau)}{2}\Id)\right) (\widetilde{\bA}\mrt x) \\
    &= \tau\lambda x^\top \widetilde{\bA}^{-1}x.
\end{align*}
Thus $\bG^{-1} \prec \tau\lambda\widetilde{\bA}^{-1}$. We can therefore conclude that the Schur complement $\frac{1}{\sigma^2}(\bF - \bG^{-1})$ is positive definite. By completing the square, we can factor $\frac{\dd \pi}{\dd x\dd x}$ as a Gaussian density. Let $z\defeq (\begin{smallmatrix} x \\ y\end{smallmatrix})$:

\begin{align*}
\frac{\dd\pi}{\dd x\dd y}(x, y) &=\omega\exp\left( \Qcal(\mu, \Gamma)(x, y)\right) \\
&= \omega\exp\left(-\frac{1}{2}(z^\top \Gamma z - 2z^\top\mu)\right)\\
&= \omega\exp\left(\frac{1}{2}\mu^\top \Gamma^{-1}\mu- \frac{1}{2}(z - \Gamma^{-1}\mu)^\top\Gamma(z-\Gamma^{-1}\mu))\right)\\
&= \omega e^{\frac{1}{2}\mu^\top \Gamma^{-1}\mu} \Ncal(\bH\mu, \bH)(z),
\end{align*}
where $\bH= \Gamma^{-1}$.
\paragraph{Detailed expressions.}
To conclude the proof of \Cref{thm:unbalanced}, we need to simplify the formulas of $m, \bH\mu$ and $\bH$. First, we will start with the mean $\bH\mu$.

\paragraph{$\bH\mu$}
Using the optimality conditions of \Cref{prop:optim-unbalanced-params} and the closed form formula of $\bv$ and $\bu$:
\begin{align}
\label{eq:finding-mu}
    \begin{split}
    \mu &= \begin{pmatrix} \bA^{-1}\ba + \bu \\
    \bB^{-1}\bb + \bv 
    \end{pmatrix} 
    \\
    &= -\frac{1}{\tau}\begin{pmatrix} 
    \bF\bv \\
    \bG\bu 
    \end{pmatrix} 
    \\
    &= -\frac{1}{\tau}
    \begin{pmatrix} 
    \bF & 0  \\
    0 & \bG
    \end{pmatrix}
    \begin{pmatrix} 
    \bv \\
    \bu 
    \end{pmatrix}
    \\
    &= -\frac{1}{\tau}
    \begin{pmatrix} 
    \bF & 0  \\
    0 & \bG
    \end{pmatrix}
    \begin{pmatrix} 
    \bG\bF - \tau^2\Id & 0  \\
    0 & \bF\bG - \tau^2\Id
    \end{pmatrix}^{-1}
    \begin{pmatrix}
    -\tau\bG &  \tau^2\Id \\
    \tau^2\Id & -\tau\bF
    \end{pmatrix}
    \begin{pmatrix}
    \bA^{-1} & 0 \\
    0 & \bB^{-1}
    \end{pmatrix}
    \begin{pmatrix}
    \ba \\
    \bb
    \end{pmatrix}
    \\
    &=
    \begin{pmatrix} 
    \bF & 0  \\
    0 & \bG
    \end{pmatrix}
    \begin{pmatrix} 
    \bG\bF - \tau^2\Id & 0  \\
    0 & \bF\bG - \tau^2\Id
    \end{pmatrix}^{-1}
    \begin{pmatrix}
    \bG & - \tau\Id \\
    -\tau\Id & \bF
    \end{pmatrix}
    \begin{pmatrix}
    \bA^{-1} & 0 \\
    0 & \bB^{-1}
    \end{pmatrix}
    \begin{pmatrix}
    \ba \\
    \bb
    \end{pmatrix}
    \\
    &=
    \begin{pmatrix} 
    \bF & 0  \\
    0 & \bG
    \end{pmatrix}
    \begin{pmatrix}
    (\bF - \tau^2\bG^{-1})^{-1} &  -\tau (\bG\bF - \tau^2\Id)^{-1} \\
    -\tau (\bF\bG - \tau^2\Id)^{-1} & (\bG - \tau^2\bF^{-1})^{-1}
    \end{pmatrix}
    \begin{pmatrix}
    \bA^{-1} & 0 \\
    0 & \bB^{-1}
    \end{pmatrix}
    \begin{pmatrix}
    \ba \\
    \bb
    \end{pmatrix}
    \\
    &=
    \begin{pmatrix} 
    \bF & 0  \\
    0 & \bG
    \end{pmatrix}
    \begin{pmatrix}
    \bF &  \tau \Id  \\
    \tau \Id & \bG
    \end{pmatrix}^{-1}
    \begin{pmatrix}
    \bA^{-1} & 0 \\
    0 & \bB^{-1}
    \end{pmatrix}
    \begin{pmatrix}
    \ba \\
    \bb
    \end{pmatrix}
    \\
    &=
    \begin{pmatrix}
    \Id &  \tau \bG^{-1}  \\
    \tau \bF^{-1} & \Id
    \end{pmatrix}^{-1}
    \begin{pmatrix}
    \bA^{-1} & 0 \\
    0 & \bB^{-1}
    \end{pmatrix}
    \begin{pmatrix}
    \ba \\
    \bb
    \end{pmatrix}
    \end{split}
\end{align}
Therefore:
\begin{align}
\label{eq:finding-Hmu}
\begin{split}
    \bH\mu  &= \sigma^2
    \begin{pmatrix}
    \bF &   -\Id  \\
     -\Id & \bG
    \end{pmatrix}^{-1}
    \begin{pmatrix}
    \Id &  \tau \bG^{-1} \Id  \\
    \tau \bF^{-1} \Id & \Id
    \end{pmatrix}^{-1}
    \begin{pmatrix}
    \bA^{-1} & 0 \\
    0 & \bB^{-1}
    \end{pmatrix}
    \begin{pmatrix}
    \ba \\
    \bb
    \end{pmatrix}
\\
&= \sigma^2
    \left(
    \begin{pmatrix}
    \Id &  \tau \bG^{-1} \Id  \\
    \tau \bF^{-1} \Id & \Id
    \end{pmatrix}
    \begin{pmatrix}
    \bF &   -\Id  \\
    -\Id & \bG
    \end{pmatrix}\right)^{-1}
    \begin{pmatrix}
    \bA^{-1} & 0 \\
    0 & \bB^{-1}
    \end{pmatrix}
    \begin{pmatrix}
    \ba \\
    \bb
    \end{pmatrix}
    \\
&= \sigma^2
    \begin{pmatrix}
    \bF-\tau\bG^{-1} &  -(1-\tau)\Id   \\ 
    -(1-\tau) \Id &\bG-\tau \bF^{-1}
    \end{pmatrix}^{-1}
    \begin{pmatrix}
    \bA^{-1} & 0 \\
    0 & \bB^{-1}
    \end{pmatrix}
    \begin{pmatrix}
    \ba \\
    \bb
    \end{pmatrix}
    \\
&= \sigma^2
    \begin{pmatrix}
    \sigma^2 \bA^{-1} + (1-\tau)\Id &  -(1-\tau)\Id   \\ 
    -(1-\tau) \Id &\sigma^2 \bB^{-1} + (1-\tau)\Id
    \end{pmatrix}^{-1}
    \begin{pmatrix}
    \bA^{-1} & 0 \\
    0 & \bB^{-1}
    \end{pmatrix}
    \begin{pmatrix}
    \ba \\
    \bb
    \end{pmatrix}
    \\
&= 
    \begin{pmatrix}
     \bA^{-1} + \Id &  -\lambda\Id   \\ 
    -\lambda \Id &\bB^{-1} + \lambda\Id
    \end{pmatrix}^{-1}
    \begin{pmatrix}
    \bA^{-1} & 0 \\
    0 & \bB^{-1}
    \end{pmatrix}
    \begin{pmatrix}
    \ba \\
    \bb
    \end{pmatrix}
    \end{split}
\end{align}
Let's compute the inverse of: 
\begin{equation}
    \label{eq:Z}
\bZ \defeq \begin{pmatrix}
     \bA^{-1} + \frac{1}{\lambda}\Id &  - \frac{1}{\lambda}\Id   \\ 
    - \frac{1}{\lambda} \Id &\bB^{-1} +  \frac{1}{\lambda}\Id
    \end{pmatrix}.
\end{equation}
Let $\bS$ and $\bS'$ be the respective Schur complements of $\bA^{-1} +  \frac{1}{\lambda} \Id$ and $\bB^{-1} +  \frac{1}{\lambda} \Id$ in $\bZ$.
The block inverse formula writes:
\begin{equation*}
      \bZ^{-1}=\begin{pmatrix}
    \bS &   \frac{1}{\lambda} \bS (\bB^{-1}+ \frac{1}{\lambda} \Id)^{-1}   \\  \frac{1}{\lambda} (\bA^{-1}+ \frac{1}{\lambda} \Id)^{-1}\bS & \bS'
    \end{pmatrix}.
\end{equation*}
Using Woodbury's identity twice and denoting $\bX\defeq \bA + \bB + \lambda\Id$:
\begin{align*}
    \bS &= (\bA^{-1} +  \frac{1}{\lambda} \Id -  \frac{1}{\lambda^2}(\bB^{-1} +  \frac{1}{\lambda}\Id)^{-1})^{-1} \\
        &= (\bA^{-1} + (\bB + \lambda\Id)^{-1})^{-1}\\
        &= (\bA - \bA(\bA + \bB + \lambda\Id)^{-1}\bA)\\
        &= \bA - \bA\bX^{-1}\bA.
\end{align*}
And similarly: $\bS'= \bB - \bB\bX^{-1}\bB$. The off-diagonal blocks can be simplified as well:
\begin{align*}
     \frac{1}{\lambda} \bS (\bB^{-1}+ \frac{1}{\lambda} \Id)^{-1}&=  \frac{1}{\lambda}(\bA^{-1} + (\bB + \lambda\Id)^{-1})^{-1}
    (\bB^{-1}+ \frac{1}{\lambda} \Id)^{-1} \\
    &= (\bA^{-1} + (\bB + \lambda\Id)^{-1})^{-1}
    (\lambda\Id+ \bB\Id)^{-1}\bB \\
    &= \left((\bB + \lambda\Id) - (\bB + \lambda\Id)(\bA + \bB + \lambda\Id)^{-1}(\bB + \lambda\Id)\right)
    (\lambda\Id+ \bB\Id)^{-1}\bB\\
    &= \bB - (\bB + \lambda\Id)\bX^{-1}\bB
    \\
    &= \bB - (\bX - \bA)\bX^{-1}\bB\\
    &= \bA\bX^{-1}\bB.
\end{align*}
Similarly, $\frac{1}{\lambda} (\bA^{-1}+\frac{1}{\lambda}\Id)^{-1}\bS = \bB\bX^{-1}\bA$.
Thus, the inverse of $\bZ$ is given by:
\begin{equation}
    \label{eq:invZ}
    \bZ^{-1} = \begin{pmatrix}
    \bA - \bA\bX^{-1}\bA &  \bA\bX^{-1}\bB   \\ \bB\bX^{-1}\bA & \bB - \bB\bX^{-1}\bB
    \end{pmatrix}.
\end{equation}
and finally:
\begin{align*}
    \bH\mu &= \bZ^{-1}
        \begin{pmatrix}
    \bA^{-1} & 0 \\
    0 & \bB^{-1}
    \end{pmatrix}
    \begin{pmatrix}
    \ba \\
    \bb
    \end{pmatrix}=
    \begin{pmatrix}
            \Id - \bA\bX^{-1} & \bA\bX^{-1} \\
            \bB\bX^{-1} & \Id - \bB\bX^{-1}
    \end{pmatrix}
    \begin{pmatrix}
    \ba \\
    \bb
    \end{pmatrix}\\
    &=    \begin{pmatrix}
    \ba + \bA\bX^{-1}(\bb - \ba)\\
    \bb + \bB\bX^{-1}(\ba - \bb)
    \end{pmatrix}
\end{align*}
\paragraph{Finding the covariance matrix $\bH$.}
To compute $\bH = \left(\frac{1}{\sigma^2}\begin{pmatrix}\bF & -\Id \\ - \Id & \bG\end{pmatrix}\right)^{-1}$ one may
use the block inverse formula. However, the Schur complement $(\bF - \bG^{-1})^{-1}$ is not easy to manipulate. Instead notice that the following holds:
\begin{align*}
\frac{1}{\sigma^2}
\begin{pmatrix}
    \bF &  -\Id   \\ 
    - \Id &\bG
    \end{pmatrix}  
    \begin{pmatrix}
    \Id &  \tau\bF^{-1}   \\ 
     \tau\bG^{-1} &\Id
    \end{pmatrix} &= \frac{1}{\sigma^2}
      \begin{pmatrix}
    \bF-\tau\bG^{-1} &  -(1-\tau)\Id   \\ 
    -(1-\tau) \Id &\bG-\tau \bF^{-1}
    \end{pmatrix} 
    \\
    &= \begin{pmatrix}
     \bA^{-1} + \frac{1}{\lambda}\Id &  -\frac{1}{\lambda}\Id   \\ 
    -\frac{1}{\lambda} \Id &\bB^{-1} + \frac{1}{\lambda}\Id
    \end{pmatrix},
\end{align*}
where the last equality follows from the optimality conditions \eqref{eq:optim-unbalanced-params}.
Therefore:
\begin{align*}
    \bH &=    
    \begin{pmatrix}
    \Id &  \tau\bF^{-1}   \\ 
     \tau\bG^{-1} &\Id
     \end{pmatrix}
     \begin{pmatrix}
     \bA^{-1} + \frac{1}{\lambda}\Id &  -\frac{1}{\lambda}\Id   \\ 
    -\frac{1}{\lambda} \Id &\bB^{-1} + \frac{1}{\lambda}\Id
    \end{pmatrix}^{-1}.
\end{align*}
Notice that we have already computed the inverse matrix on the right side above in the developments of $\bH\mu$. Thus:
\begin{align*}
    \bH &=    
    \begin{pmatrix}
    \Id &  \tau\bF^{-1}   \\ 
     \tau\bG^{-1} &\Id
     \end{pmatrix}
     \begin{pmatrix}
     \bA - \bA\bX^{-1}\bA &  \bA\bX^{-1}\bB  \\ 
    \bB\bX^{-1}\bA & \bB - \bB\bX^{-1}\bB
    \end{pmatrix} \\
    &= 
    \begin{pmatrix}
    \Id &  \tau \bC\widetilde{\bB}^{-1}   \\ 
     \bC^\top\widetilde{\bA}^{-1}  &\Id
     \end{pmatrix}
     \begin{pmatrix}
     \bA - \bA\bX^{-1}\bA &  \bA\bX^{-1}\bB  \\ 
    \bB\bX^{-1}\bA & \bB - \bB\bX^{-1}\bB
    \end{pmatrix}
    \\
    &= 
    \begin{pmatrix}
    \Id &   \bC(\bB^{-1} +\frac{1}{\lambda} \Id)   \\ 
     \bC^\top(\bA^{-1} + \frac{1}{\lambda}\Id)  &\Id
     \end{pmatrix}
     \begin{pmatrix}
     \bA - \bA\bX^{-1}\bA &  \bA\bX^{-1}\bB  \\ 
    \bB\bX^{-1}\bA & \bB - \bB\bX^{-1}\bB
    \end{pmatrix}
    \\
    &= 
    \begin{pmatrix}
    \Id &   \bC(\bB^{-1} + \frac{1}{\lambda} \Id)   \\ 
     \bC^\top(\bA^{-1} + \frac{1}{\lambda}\Id)  &\Id
     \end{pmatrix}
     \begin{pmatrix}
     \bA - \bA\bX^{-1}\bA &  \bA\bX^{-1}\bB  \\ 
    \bB\bX^{-1}\bA & \bB - \bB\bX^{-1}\bB
    \end{pmatrix}
\\
&=
    \begin{pmatrix}
    \Id &   \frac{1}{\lambda}\bC(\lambda\Id  + \bB)\bB^{-1}   \\ 
     \frac{1}{\lambda}\bC^\top\bC(\lambda\Id  + \bA)\bA^{-1}   &\Id
     \end{pmatrix}
     \begin{pmatrix}
     \bA - \bA\bX^{-1}\bA &  \bA\bX^{-1}\bB  \\ 
    \bB\bX^{-1}\bA & \bB - \bB\bX^{-1}\bB
    \end{pmatrix}
\\
&=
    \begin{pmatrix}
    \Id &   \frac{1}{\lambda}\bC(\bX-\bA)\bB^{-1}   \\ 
     \frac{1}{\lambda}\bC^\top(\bX-\bB)\bA^{-1}   &\Id
     \end{pmatrix}
     \begin{pmatrix}
     \bA - \bA\bX^{-1}\bA &  \bA\bX^{-1}\bB  \\ 
    \bB\bX^{-1}\bA & \bB - \bB\bX^{-1}\bB
    \end{pmatrix}
\\
&=
    \begin{pmatrix}
     \bA - \bA\bX^{-1}\bA +   \frac{1}{\lambda}\bC(\bA-\bA\bX^{-1}\bA) &\bA\bX^{-1}\bB + \frac{1}{\lambda}\bC(\bX-\bA)(\Id - \bX^{-1}\bB)  \\ 
     \frac{1}{\lambda}\bC^\top(\bX-\bB)(\Id - \bX^{-1}\bA) + \bB\bX^{-1}\bA   & \frac{1}{\lambda}\bC^\top(\bX-\bB)\bX^{-1}\bB + \bB - \bB\bX^{-1}\bB
     \end{pmatrix}
\\
&=
    \begin{pmatrix}
     (\Id +  \frac{1}{\lambda}\bC)(\bA-\bA\bX^{-1}\bA) &\bA\bX^{-1}\bB + \frac{1}{\lambda}\bC(\bX - \bA -\bB + \bA\bX^{-1}\bB)  \\ 
     \lambda\bC^\top(\lambda\Id + \bB\bX^{-1}\bA) +\bB\bX^{-1}\bA  & \frac{1}{\lambda}\bC^\top(\bX-\bB)\bX^{-1}\bB + \bB - \bB\bX^{-1}\bB
     \end{pmatrix}
\\
&=
    \begin{pmatrix}
     (\Id + \frac{1}{\lambda}\bC)(\bA-\bA\bX^{-1}\bA) &\bA\bX^{-1}\bB + \frac{1}{\lambda}\bC(\lambda\Id + \bA\bX^{-1}\bB)  \\ 
     \bC^\top + \frac{1}{\lambda}\bC^\top\bB\bX^{-1}\bA + \bB\bX^{-1}\bA   & (\Id + \frac{1}{\lambda}\bC^\top)(\bB-\bB\bX^{-1}\bB)
     \end{pmatrix}
\\
&=
    \begin{pmatrix}
     (\Id + \frac{1}{\lambda}\bC)(\bA-\bA\bX^{-1}\bA) &\bC +  (\Id + \frac{1}{\lambda}\bC)\bA\bX^{-1}\bB  \\ 
     \bC^\top + (\Id + \frac{1}{\lambda}\bC^\top)\bB\bX^{-1}\bA   & (\Id +  \frac{1}{\lambda}\bC^\top)(\bB-\bB\bX^{-1}\bB)
     \end{pmatrix}.
\end{align*}

\paragraph{Finding the mass of the plan $\pi$.}
The optimal transport plan is given by:
\begin{align}
\label{eq:unbalanced-density}
\frac{\dd\pi}{\dd x\dd y}(x, y) 
&= \omega e^{\frac{1}{2}\mu^\top \Gamma^{-1}\mu}
\sqrt{\det(2\pi\bH)}\Ncal(\bH\mu, \bH)(z),
\end{align}
where 
\begin{align*}
    \omega &= \frac{m_\alpha m_\beta}{\sqrt{\det(4\pi^2 \bA\bB)}} m_u m_v e^{-\frac{1}{2}(\ba^\top\bA^{-1}\ba + \bb^\top\bB^{-1}\bb)}\\
     &= \frac{m_\alpha m_\beta}{\sqrt{\det(4\pi^2 \bA\bB)}}
     \left(\frac{\sqrt{\det(\bA\bB)\det(\bF\bG)}}{\sigma^{2d} m_\alpha m_\beta}\right)^\frac{\tau}{\tau + 1} e^{
     -\frac{\tau}{2(\tau + 1)}(q_{u, \alpha} + q_{v, \beta})}
     e^{-\frac{1}{2}(\ba^\top\bA^{-1}\ba + \bb^\top\bB^{-1}\bb)}\\
    &= \frac{1}{(2\pi)^{d}}\left(\frac{m_\alpha m_\beta}{\sqrt{\det( \bA\bB)}}\right)^{\frac{1}{\tau + 1}}
     \left(\frac{\sqrt{\det(\bF\bG)}}{\sigma^{2d}}\right)^\frac{\tau}{\tau + 1} e^{
     -\frac{\tau}{2(\tau + 1)}(q_{u, \alpha} + q_{v, \beta})}
     e^{-\frac{1}{2}(\ba^\top\bA^{-1}\ba + \bb^\top\bB^{-1}\bb)}.
\end{align*}
First, let's simplify the argument of the exponential terms. Isolating the terms that depend only on the input means $\ba, \bb$ it holds:
$q_{u, \alpha} + q_{v, \beta} = \frac{\sigma^2}{\tau^2}(\bv^\top\bF\bv +\bu^\top\bG\bu) + \ba^\top\bA^{-1}\ba+ \bb^\top\bB^{-1}\bb$. Therefore, the full exponential argument is given by:
\begin{equation}
    \label{eq:exparg}
   \phi \defeq \mu^\top\Gamma^{-1}\mu - \frac{\tau}{\tau + 1}\frac{\sigma^2}{\tau^2}(\bv^\top\bF\bv +\bu^\top\bG\bu) - \frac{1}{\tau + 1}(\ba^\top\bA^{-1}\ba+ \bb^\top\bB^{-1}\bb)
\end{equation}

On one hand, using \Cref{eq:finding-Hmu} we replace $\mu$:
\begin{align*} \mu^\top\Gamma^{-1}\mu &=
    \mu^\top\bH\mu
    \\
    &=
    \sigma^2
    \begin{pmatrix}
    \bA^{-1}\ba \\
    \bB^{-1}\bb
    \end{pmatrix}^\top
    \begin{pmatrix}
    \Id &  \tau \bF^{-1}  \\
    \tau \bG^{-1} & \Id
    \end{pmatrix}^{-1}
    \begin{pmatrix}
        \bF & -\Id\\ -\Id& \bG
    \end{pmatrix}^{-1}
    \begin{pmatrix}
    \Id &  \tau \bG^{-1}  \\
    \tau \bF^{-1} & \Id
    \end{pmatrix}^{-1}
    \begin{pmatrix}
    \bA^{-1}\ba \\
    \bB^{-1}\bb
    \end{pmatrix}
\end{align*}
On the other hand:
\begin{align*}
 \frac{\sigma^2}{\tau^2}(\bv^\top\bF\bv + \bu^\top\bG\bu) 
&= \sigma^2 ((\bA^{-1}\ba + \bu)^\top\bF^{-1}(\bA^{-1}\ba + \bu)+(\bB^{-1}\bb + \bv)^\top\bG^{-1}(\bB^{-1}\bb + \bv))\\
&= \sigma^2\mu^\top 
\begin{pmatrix}
        \bF^{-1} & 0 \\ 0 & \bG^{-1}
    \end{pmatrix}
\mu\\
&= 
  \sigma^2
    \begin{pmatrix}
    \bA^{-1}\ba \\
    \bB^{-1}\bb
    \end{pmatrix}^\top
    \begin{pmatrix}
    \Id &  \tau \bF^{-1}  \\
    \tau \bG^{-1} & \Id
    \end{pmatrix}^{-1}
    \begin{pmatrix}
        \bF^{-1} & 0 \\ 0 & \bG^{-1}
    \end{pmatrix}
    \begin{pmatrix}
    \Id &  \tau \bG^{-1}  \\
    \tau \bF^{-1} & \Id
    \end{pmatrix}^{-1}
    \begin{pmatrix}
    \bA^{-1}\ba \\
    \bB^{-1}\bb
    \end{pmatrix}
\end{align*}
Let $\bJ =     \begin{pmatrix}
    \Id &  \tau \bG^{-1}  \\
    \tau \bF^{-1} & \Id
    \end{pmatrix}
   $ and $\bK =     \begin{pmatrix}
        \bF & 0 \\ 0 & \bG
    \end{pmatrix}$. It holds:
\begin{align*}
   \mu^\top\Gamma^{-1}\mu - \frac{\tau}{\tau + 1}\frac{\sigma^2}{\tau^2}(\bv^\top\bF\bv +\bu^\top\bG\bu) &=
    \begin{pmatrix}
    \bA^{-1}\ba \\
    \bB^{-1}\bb
    \end{pmatrix}^\top
    {\bJ^\top}^{-1}(\bH -\frac{\sigma^2 \tau}{\tau+1}\bK^{-1})\bJ^{-1}
        \begin{pmatrix}
    \bA^{-1}\ba \\
    \bB^{-1}\bb
    \end{pmatrix}
\end{align*}
Let's compute the matrix ${\bJ^\top}^{-1}(\bH -\frac{\tau\sigma^2}{\tau+1}\bK^{-1})\bJ^{-1}$. First keep in mind that $\bJ\bK = \begin{pmatrix} \bF &\tau\Id \\ \tau\Id & \bG\end{pmatrix}$. Now 
using Woodburry's identity:
\begin{align*}
    \left({\bJ^\top}^{-1}(\bH -\frac{\tau}{\tau+1}\bK^{-1})\bJ^{-1}\right)^{-1} &= \bJ(\bH -\frac{\tau\sigma^2}{\tau+1}\bK^{-1})^{-1}\bJ^\top \\
    &= \bJ\left(-\frac{\tau+1}{\tau\sigma^2}\bK - \left(\frac{\tau+1}{\tau\sigma^2}\right)^2\bK(\bH^{-1} - \frac{\tau+1}{\tau\sigma^2}\bK)^{-1}\bK\right)\bJ^\top\\
    &=\frac{\tau+1}{\tau\sigma^2} \left( -\bJ\bK\bJ^\top - \frac{\tau+1}{\tau\sigma^2}\bJ\bK(\begin{pmatrix} - \frac{\bF}{\tau\sigma^2} & -\frac{1}{\sigma^2}\Id \\ -\frac{1}{\sigma^2}\Id & -\frac{\bG}{\tau\sigma^2}\end{pmatrix}^{-1}(\bJ\bK^\top)^\top\right) \\
    &=\frac{\tau+1}{\tau\sigma^2} \left( -\bJ\bK\bJ^\top + (\tau+1)\bJ\bK(\begin{pmatrix} \bF & \tau\Id \\ \tau\Id & \bG\end{pmatrix}^{-1}(\bJ\bK^\top)^\top\right) 
    \\
  &=\frac{\tau+1}{\tau\sigma^2} \left( -\begin{pmatrix} \bF &\tau\Id \\ \tau\Id & \bG\end{pmatrix}\begin{pmatrix}
    \Id &  \tau \bF^{-1}  \\
    \tau \bG^{-1} & \Id
    \end{pmatrix} + (\tau+1)\begin{pmatrix} \bF &\tau\Id \\ \tau\Id & \bG\end{pmatrix}\right)
  \\
   &=\frac{\tau+1}{\tau\sigma^2} \begin{pmatrix} -\bF -\tau^2\bG^{-1} + (\tau + 1)\bF &(-2\tau + \tau(\tau+1))\Id \\ (-2\tau + \tau(\tau+1))\Id & -\bG -\tau^2\bF^{-1} + (\tau + 1)\bG\end{pmatrix}
   \\
     &=\frac{\tau+1}{\sigma^2} \begin{pmatrix} \bF -\tau\bG^{-1}  & - (1-\tau)\Id \\ -(1-\tau)\Id & \bG -\tau\bF^{-1}\end{pmatrix}
   \\
     &=(\tau+1) \begin{pmatrix} \bA^{-1} + \frac{1}{\lambda} \Id  & -\frac{1}{\lambda}\Id \\ -\frac{1}{\lambda}\Id & \bB^{-1} + \frac{1}{\lambda}\Id\end{pmatrix}
     \\&= (\tau + 1)\bZ
\end{align*}
Therefore:
\begin{equation}
    \mu^\top\Gamma^{-1}\mu - \frac{\tau}{\tau + 1}\frac{\sigma^2}{\tau^2}(\bv^\top\bF\bv +\bu^\top\bG\bu) = \frac{1}{\tau + 1}
         \begin{pmatrix}
    \bA^{-1}\ba \\
    \bB^{-1}\bb
    \end{pmatrix}^\top
    \bZ^{-1}
        \begin{pmatrix}
    \bA^{-1}\ba \\
    \bB^{-1}\bb
    \end{pmatrix}
\end{equation}

The full exponential argument $\phi$ defined in \Cref{eq:exparg} is given by:
\begin{align*}
    \phi &= \frac{1}{\tau + 1}\left(
         \begin{pmatrix}
    \bA^{-1}\ba \\
    \bB^{-1}\bb
    \end{pmatrix}^\top
    \bZ^{-1}
        \begin{pmatrix}
    \bA^{-1}\ba \\
    \bB^{-1}\bb
    \end{pmatrix} - \ba^\top\bA^{-1}\ba - \bb^\top\bB^{-1}\bb\right)\\
    &=
\frac{1}{\tau + 1}
         \begin{pmatrix}
    \ba \\
    \bb
    \end{pmatrix}^\top
 \begin{pmatrix}
    \bA^{-1} & 0\\
   0 &  \bB^{-1}
    \end{pmatrix}
    \left(\bZ^{-1} - \begin{pmatrix}
    \bA & 0\\
   0 &  \bB
    \end{pmatrix}\right)
 \begin{pmatrix}
    \bA^{-1} & 0\\
   0 &  \bB^{-1}
    \end{pmatrix}
     \begin{pmatrix}
    \ba \\
    \bb
    \end{pmatrix}
\\
    &=
\frac{1}{\tau + 1}
         \begin{pmatrix}
    \ba \\
    \bb
    \end{pmatrix}^\top
 \begin{pmatrix}
    \bA^{-1} & 0\\
   0 &  \bB^{-1}
    \end{pmatrix}
 \begin{pmatrix}
    -\bA\bX^{-1}\bA & \bA\bX^{-1}\bB\\
   \bB\bX^{-1}\bA &  -\bB\bX^{-1}\bB
    \end{pmatrix}
 \begin{pmatrix}
    \bA^{-1} & 0\\
   0 &  \bB^{-1}
    \end{pmatrix}
     \begin{pmatrix}
    \ba \\
    \bb
    \end{pmatrix}
\\
    &=
\frac{1}{\tau + 1}
         \begin{pmatrix}
    \ba \\
    \bb
    \end{pmatrix}^\top
 \begin{pmatrix}
    -\bX^{-1} & \bX^{-1}\\
   \bX^{-1} &  -\bX^{-1}
    \end{pmatrix}
     \begin{pmatrix}
    \ba \\
    \bb
    \end{pmatrix}\\
    &= -\frac{1}{\tau + 1}(\ba-\bb)^\top\bX^{-1}(\ba-\bb)\\
    &= \frac{1}{\tau + 1}\|\ba-\bb\|^2_{\bX^{-1}}
\end{align*}
Substituting in \eqref{eq:unbalanced-density} leads to:
\begin{align*}
    m_\pi &\defeq \pi(\RR^d \times \RR^d) \\
    &= \sqrt{\det(\bH)}\left(\frac{m_\alpha m_\beta}{\sqrt{\det( \bA\bB)}}\right)^{\frac{1}{\tau + 1}}
     \left(\frac{\sqrt{\det(\bF\bG)}}{\sigma^{2d}}\right)^\frac{\tau}{\tau + 1}
    e^{
     -\frac{1}{2(\tau + 1)}(\|\ba-\bb\|^2_{\bX^{-1}})}.
\end{align*}

The determinants can be easily expressed as functions of $\bC$. First notice that:
\begin{align*}
    \det(\bH) = \frac{1}{\det(\Gamma)} = \frac{\sigma^{4d}}{\det(\bF\bG -\Id)},
\end{align*}
and using the definition of $\bC$, it holds that
\begin{align*}
    \bF\bG = \widetilde{\bB}\bC^{-2}\widetilde{\bA}.
\end{align*}
Therefore, $
    \det(\bF\bG) = \frac{\det(\widetilde{\bA}\widetilde{\bB} )}{\det(\bC)^2}$.
Keeping in mind that the closed form expression of $\bC$ given in
\eqref{eq:fg-closed-form-unbalanced} is applied to the pair $(\frac{1}{\tau}\widetilde{\bA}, \widetilde{\bB})$ in the unbalanced case, it holds:
$
    \bC^2 + \sigma^2 \bC = \frac{1}{\tau}\widetilde{\bA}\widetilde{\bB}$.
Thus:
\begin{align*}
\bF\bG-\Id &= \widetilde{\bB}\bC^{-2}\widetilde{\bA}( \Id -\widetilde{\bA}^{-1}\bC^2\widetilde{\bB}^{-1}) \\
&= \widetilde{\bB}\bC^{-2}\widetilde{\bA}( \Id -\widetilde{\bA}^{-1}(\frac{1}{\tau}\widetilde{\bA}\widetilde{\bB} - \sigma^2 \bC)\widetilde{\bB}^{-1}) \\
&= \widetilde{\bB}\bC^{-2}\widetilde{\bA}(\frac{(1-\tau)}{\tau}\Id + \sigma^2 \widetilde{\bA}^{-1}\bC\widetilde{\bB}^{-1})
 \\
&= \sigma^2\widetilde{\bB}\bC^{-2}\widetilde{\bA}(-\frac{2}{\gamma}\Id + \widetilde{\bA}^{-1}\bC\widetilde{\bB}^{-1})
 \\
&= \sigma^2\widetilde{\bB}\bC^{-2}(-\frac{2}{\gamma}\widetilde{\bA}\widetilde{\bB} + \bC)\widetilde{\bB}^{-1},
\end{align*}
therefore $$\det(\bF\bG - \Id) = \sigma^{2d}\frac{\det((-\frac{2}{\gamma}\widetilde{\bA}\widetilde{\bB} + \bC)}{\det(\bC)^2}.$$
Replacing the determinant formulas of $\bF\bG$ and $\bF\bG - \Id$ and re-arranging the common terms $\det(\bC)$ and $\sigma$ leads to:
\begin{align}
    \label{eq:mass-det}
    \begin{split}
 \pi(\RR^d \times \RR^d) &= \frac{\left(m_\alpha m_\beta \sigma^{2d} \det(\bC)\sqrt{\frac{\det(\widetilde{\bA}\widetilde{\bB})^\tau}{\det( \bA\bB)}}\right)^{\frac{1}{\tau + 1}}}
    {\sqrt{\frac{\det(\bC-\frac{2}{\gamma}\widetilde{\bA}\widetilde{\bB})}{\sigma^{2d}}}}
     e^{
     -\frac{1}{2(\tau + 1)}(\|\ba-\bb\|^2_{\bX^{-1}})} \\
     &= \sigma^{d(\frac{2}{\tau + 1} - 1)}\frac{\left(m_\alpha m_\beta \det(\bC)\sqrt{\frac{\det(\widetilde{\bA}\widetilde{\bB})^\tau}{\det( \bA\bB)}}\right)^{\frac{1}{\tau + 1}}}
    {\sqrt{\det(\bC-\frac{2}{\gamma}\widetilde{\bA}\widetilde{\bB})}}
     e^{
     -\frac{1}{2(\tau + 1)}(\|\ba-\bb\|^2_{\bX^{-1}})}
\\
     &= \sigma^{d\frac{1-\tau}{\tau + 1}} \frac{\left(m_\alpha m_\beta \det(\bC)\sqrt{\frac{\det(\widetilde{\bA}\widetilde{\bB})^\tau}{\det( \bA\bB)}}\right)^{\frac{1}{\tau + 1}}}
    {\sqrt{\det(\bC-\frac{2}{\gamma}\widetilde{\bA}\widetilde{\bB})}}
     e^{
     -\frac{1}{2(\tau + 1)}(\|\ba-\bb\|^2_{\bX^{-1}})}
\\
     &= \sigma^{\frac{d\sigma^2}{\sigma^2 + \gamma}} \frac{\left(m_\alpha m_\beta \det(\bC)\sqrt{\frac{\det(\widetilde{\bA}\widetilde{\bB})^\tau}{\det( \bA\bB)}}\right)^{\frac{1}{\tau + 1}}}
    {\sqrt{\det(\bC-\frac{2}{\gamma}\widetilde{\bA}\widetilde{\bB})}}
     e^{
     -\frac{1}{2(\tau + 1)}(\|\ba-\bb\|^2_{\bX^{-1}})}
     \end{split}
\end{align}

\paragraph{Deriving a closed form for $\UOT_\sigma$.}

Using \Cref{eq:mass-det}, a direct application of \Cref{prop:unbalanced-loss-opt} yields    
\begin{equation}
\UOT_\sigma(\alpha, \beta) = \gamma (m_\alpha + m_\beta) + 2\sigma^2(m_\alpha m_\beta) - 2(\sigma^2+ 2\gamma)m_{\pi^\star}.
\end{equation}

This ends the proof of \Cref{thm:unbalanced}.
}{}

\end{document}